\DeclareMathOperator{\C}{\mathcal{C}}
\newtheorem{theorem}{Theorem}[section]
\newtheorem{lemma}[theorem]{Lemma}
\newtheorem{corollary}[theorem]{Corollary}
\newtheorem{definition}[theorem]{Definition}
\newtheorem{proposition}[theorem]{Proposition}
\newtheorem{remark}[theorem]{Remark}
\newtheorem{open}[theorem]{Open Problem}
\newcommand{\fqn}{\mathbb{F}_{q^n}}
\newcommand{\F}{{\mathbb F}}
\newcommand{\KK}{{\mathbb K}}
\newcommand{\fq}{{\mathbb F}_{q}}
\newcommand{\la}{\langle}
\newcommand{\ra}{\rangle}
\newcommand{\PG}{\mathrm{PG}}
\newcommand{\N}{\mathrm{N}}
\title{$r$-fat linearized polynomials over finite fields}
\author{Daniele Bartoli\thanks{Dipartimento di Matematica e Informatica, Universit\`a degli studi di Perugia,  Perugia, Italy. daniele.bartoli@unipg.it}, 
Giacomo Micheli\thanks{Department of Mathematics \& Statistics, University of South Florida, Tampa, United States. gmicheli@usf.edu},
Giovanni Zini\thanks{Dipartimento di Matematica e Fisica, Universit\`a degli Studi della Campania ``Luigi Vanvitelli'', Caserta, Italy.
giovanni.zini@unicampania.it}, and 
Ferdinando Zullo\thanks{Dipartimento di Matematica e Fisica, Universit\`a degli Studi della Campania ``Luigi Vanvitelli'', Caserta, Italy.
ferdinando.zullo@unicampania.it}
}
\date{ }
\begin{document}

\maketitle
\begin{abstract}
In this paper we prove that the property of being scattered for a $\mathbb{F}_q$-linearized polynomial of small $q$-degree over a finite field $\mathbb{F}_{q^n}$ is unstable, in the sense that, whenever the corresponding linear set has at least one point of weight larger than one, the polynomial is far from being scattered.
To this aim, we define and investigate $r$-fat polynomials, a natural generalization of scattered polynomials.
An $r$-fat $\mathbb{F}_q$-linearized polynomial defines a linear set of rank $n$ in the projective line of order $q^n$ with $r$ points of weight larger than one.
When $r$ equals $1$, the corresponding linear sets are called clubs, and they are related with a number of remarkable mathematical objects like KM-arcs, group divisible designs and rank metric codes.
Using techniques on algebraic curves and global function fields, we obtain numerical bounds for $r$ and the non-existence of exceptional $r$-fat polynomials with $r>0$.
In the case $n\leq 4$, we completely determine the spectrum of values of $r$ for which an $r$-fat polynomial exists.
In the case $n=5$, we provide a new family of $1$-fat polynomials.
Furthermore, we determine the values of $r$ for which the so-called LP-polynomials are $r$-fat.
\end{abstract}

\noindent {\bf Keywords:} linear sets, $r$-fat polynomials, scattered polynomials, linearized polynomials, $i$-club.

\noindent {\bf 2020 Mathematics Subject Classification:}  11T06, 51E20, 51E22, 05B25.

\section{Introduction}

A polynomial of the shape $f(x)=\sum_{i=0}^k a_i x^{q^i}$ over the finite field $\fqn$ of order $q^n$ is called an $\fq$-linearized polynomial, or a $q$-polynomial, or simply a linearized polynomial if $q$ is clear from the context.
We identify $f(x)$ with the $\fq$-linear map $x\mapsto f(x)$ over $\fqn$, and we assume the $q$-degree of $f(x)$ to be smaller than $n$; in this way, $\fq$-linearized polynomials over $\fqn$ are in one-to-one correspondence with $\fq$-linear maps over $\fqn$.

An $\fq$-linearized polynomial $f(x)\in \fqn[x]$ is said to be \emph{scattered} if \[ \dim_{\fq} \ker(f(x)-mx) \leq 1, \]
for every $m \in \fqn$.
The concept of scattered polynomial was introduced in \cite{John} and slightly generalized in \cite{BZ}.

\begin{definition}{\rm \cite{BZ,John}}
An $\fq$-linearized polynomial $f(x)\in \mathbb{F}_{q^n}[x]$ is called a scattered polynomial of index $t\in\{0,\ldots,n-1\}$ if 
\[ \dim_{\fq} \ker(f(x)-mx^{q^t}) \leq 1, \]
for every $m \in \fqn$.
Also, a scattered polynomial of index $t$ is exceptional if it is scattered of index $t$ over infinitely many extensions $\mathbb{F}_{q^{nm}}$ of $\mathbb{F}_{q^n}$.
\end{definition}

In this paper, we investigate a more general family of polynomials.

\begin{definition}
An $\fq$-linearized polynomial $f(x)\in \mathbb{F}_{q^n}[x]$ is called an $r$-fat polynomial of index $t\in\{0,\ldots,n-1\}$ if there exist exactly $r$ elements $m_1,\ldots,m_r \in \fqn$ such that
\[ \dim_{\fq} \ker (f(x)-m_ix^{q^t})>1 \]
for each $i \in \{1,\ldots,r\}$.
We say that $f(x)$ has maximum weight $M$ if
\[
\max\{\dim_{\fq} \ker (f(x)-m_ix^{q^t})\colon i \in \{1,\ldots,r\}\}=M.
\]
Also, an  $r$-fat polynomial of index $t$ is exceptional if it is $r$-fat of index $t$ over infinitely many extensions $\mathbb{F}_{q^{nm}}$ of $\mathbb{F}_{q^n}$.
\end{definition}
Note that $0$-fat means scattered.  Up to now, the only known exceptional $r$-fat polynomials are scattered. 

Scattered polynomials were introduced to study  \emph{scattered linear sets}. Any $q$-polynomial over $\fqn$ defines a so-called $\fq$-linear set 
\begin{equation}\label{eq:defLft}
L_{f,t}=\{\langle (x^{q^t},f(x)) \rangle_{\fqn} \colon x \in \fqn^*\}\subset \PG(1,q^n)
\end{equation}
of rank $n$ in $\PG(1,q^n)$, for any $t\geq0$.
Conversely, any $\fq$-linear set of rank $n$ in ${\rm PG}(1,q^n)$ not containing the point $\langle(0,1)\rangle_{\fqn}$ (which we can always assume after a suitable projectivity) is of type $L_{f,t}$ for some $q$-polynomial $f(x)\in\fqn[x]$ and $t\geq0$.

More generally, let $\Lambda=\PG(V,\F_{q^n})=\PG(r-1,q^n)$, where $V$ is a vector space of dimension $r$ over $\F_{q^n}$.
A point set $L$ of $\Lambda$ is said to be an \emph{$\F_q$-linear set} of rank $h$ in $\Lambda$ if it is
defined by the non-zero vectors of a $h$-dimensional $\F_q$-vector subspace $U$ of $V$, i.e.
\[L=L_U=\{\la {\bf u} \ra_{\mathbb{F}_{q^n}} \colon {\bf u}\in U\setminus \{{\bf 0} \}\}.\]
The \emph{weight} in $L_U$ of a point $P=\langle \mathbf{u} \rangle_{\F_{q^n}}$ is $w_{L_U}(P)=\dim_{\F_q}(U\cap\langle \mathbf{u} \rangle_{\F_{q^n}})$.



Two linear sets $L_{U_1}$ and $L_{U_2}$ in $\Lambda$ are $\mathrm{P}\Gamma \mathrm{L}$-\emph{equivalent} if there exists $\varphi \in \mathrm{P}\Gamma \mathrm{L} (r-1,q^n)$ such that $\varphi(L_{U_1})=L_{U_2}$.
We will say that two $q$-polynomials $f_1(x)$ and $f_2(x)$ over $\fqn$ are \emph{equivalent} if $L_{f_1,t_1}$ and $L_{f_2,t_2}$ are $\mathrm{P}\Gamma \mathrm{L}$-equivalent, for some $t_1,t_2 \in \{0,\ldots,n-1\}$.

A $q$-polynomial $f(x) \in \fqn[x]$ is scattered of index $t$ if and only if $L_{f,t}$ has $\frac{q^n-1}{q-1}$ points, i.e. $L_{f,t}$ is \emph{scattered}.
Recently, scattered polynomials and scattered linear sets played a central role in the theory of rank metric codes;
see \cite{Sheekeysurvey,PZScatt,ZiniZulloScatt} for details on the connection between scattered linear sets and MRD codes.

$r$-fats polynomials are related to a special type of linear sets when $r=1$.
In fact, $1$-fat polynomials define \emph{$i$-clubs} of ${\rm PG}(1,q^n)$, i.e. $\fq$-linear sets in which one point has weight $i$ and all the others have weight one.
As we will recall later, the relevance of clubs arises in the study of a special type of arcs in desarguesian projective planes of even order, called KM-arcs. Clubs are also related with group divisible designs which are embeddable in a finite projective plane; see \cite{CW}.
It is worth noting that $1$-fat polynomials provide three-weight rank metric codes, see \cite{John}.

Inspired by the classification results for exceptional scattered polynomials contained in \cite{BM,BZ,FM2020}, we use machineries both from algebraic geometry over finite fields and algebraic number theory to investigate the existence of $r$-fat polynomials.

The connection between algebraic curves and $r$-fat polynomials follows directly from the one between curves and scattered polynomials described in \cite{BM,BZ}.
For any $r$-fat polynomial $f(x)$ of index $t$ over $\fqn$, consider the algebraic plane curves
\[\mathcal{N}\colon f(X) Y^{q^t}-f(Y)X^{q^t}=0,\qquad\mathcal{C}\colon \frac{f(X) Y^{q^t}-f(Y)X^{q^t}}{X^qY-XY^q}=0,\]
whose equations depend on the polynomial and its index. We provide the existence of an $\fqn$-rational absolutely irreducible component of $\C$ and then we apply the Hasse-Weil bound in order to get a bound on the weight distribution of the linear set $L_{f,t}$, which also provides a bound on the integer $r$.
Another technique from \cite{FM2020} based on Galois-theoretical tools is applied here to get different bounds on the weight distribution of $L_{f,t}$ and on the integer $r$.
These bounds imply the non-existence of exceptional $r$-fat polynomials when $r>0$. 

For $n=4$, by using the classification of linear blocking sets in $\mathrm{PG}(2,q^4)$ obtained in \cite{BoPol}, we classify the $\fq$-linear sets of rank $4$ in $\PG(1,q^4)$. 
As a consequence, we determine all the values of $r$ for which there exists at least one $r$-fat polynomial over $\mathbb{F}_{q^4}$.
For $n=5$, constructions of clubs in $\PG(1,q^5)$ which are not defined by the trace function were already known, but a $1$-fat polynomial representation for them is unknown.
We provide a new example of $1$-fat polynomial over $\mathbb{F}_{q^5}$ which is not equivalent to the trace function (the only previously known one), and we also determine the stabilizer in $\mathrm{P\Gamma L}(2,q^5)$ of the associated linear set. 
Finally, for any $n$, we find the complete weight distribution of the family of linear sets of LP-type named after Lunardon and Polverino, namely $L_{f,s}$ where $f(x)=x+\delta x^{q^{2s}}$ and $\gcd(s,n)=1$,
whose points have weight at most two. Indeed, by investigating a certain quadratic form over $\fqn$, we are able to count the points having weight two. This also provides examples of $r$-flat polynomials over $\fqn$ for $r\in \left\{\frac{q^{n-1}-1}{q^2-1},\frac{q^2(q^{\frac{n-2}2}+1)(q^{\frac{n-2}2-1}-1)}{q^2-1}+1,\frac{(q^{n/2}+1)(q^{n/2-1}-1)}{q^2-1}\right\}$.

The paper is organized as follows.
In Section \ref{sec:pre} we briefly describe some tools from algebraic geometry, algebraic number theory, and linear sets theory.
Section \ref{sec:difficile} is devoted to the the study of $r$-fat polynomials through the investigation of certain algebraic curves over finite fields and Galois extensions of algebraic function fields over finite fields.
In Section \ref{sec:known} we resume what is known about $1$-fat polynomials and clubs of rank $n$ in $\PG(1,q^n)$.
As a consequence of the classification of $\fq$-linear blocking sets in $\PG(2,q^4)$, we classify in Section \ref{sec:q4} the $\fq$-linear sets of rank $4$ in $\PG(1,q^4)$. 
As a byproduct, we determine all the values of $r$ for which there exists at least one $r$-fat polynomial over $\mathbb{F}_{q^4}$.
In Section \ref{sec:q5} we give an example of a $1$-fat polynomial over $\mathbb{F}_{q^5}$ not equivalent to the trace function and we study the associated linear set.
Section \ref{sec:LP} is devoted to the study of the linear sets of LP-type and the determination of their weight distribution.
Section \ref{sec:open} concludes the paper with some open problems that could be of interest for further research.

\section{Preliminaries}\label{sec:pre}

\subsection{Algebraic curves and function fields over finite fields}

We start by recalling some elementary facts about plane curves and a simplified version of the Hasse-Weil bound; for a detailed exposition we refer to \cite{HKT}.

Let $\KK$ be a field, $\overline{\KK}$ be an algebraic closure of $\KK$, $f(X,Y)\in\overline{\KK}[X,Y]$ be a bivariate polynomial over $\overline{\KK}$ of degree $d\geq1$, and $\mathcal{F}\subset\PG(2,\overline{\KK})$ be the projective algebraic plane curve with affine equation $\mathcal{F}\colon f(X,Y)=0$.
Equivalently, $\mathcal{F}$ has homogeneous equation $f_h(X,Y,Z)=0$, where $f_h(X,Y,Z)=Z^d f(X/Z,Y/Z)$.
The degree $\deg(\mathcal{F})\geq1$ is defined as the degree of the polynomial $f(X,Y)$.
For any line $\ell\subset\PG(2,\overline{\KK})$, $\deg(\mathcal{F})$ is an upper bound on the number of intersection points between $\mathcal{F}$ and $\ell$.
A point $P$ of $\mathcal{F}$ is singular if and only if the three partial derivatives of $f_h(X,Y,Z)$ vanish at $P$; a curve with no singular points is called non-singular.
If a non-singular curve $\mathcal{G}\subset\PG(2,\overline{\KK})$ is a non-repeated component of $\mathcal{F}$ defined by a divisor $g(X,Y)$ of $f(X,Y)$ and $P$ is a point of $\mathcal{G}$, then $P$ is singular for $\mathcal{F}$ if and only if $P$ is a point of the component of $\mathcal{F}$ with affine equation $f(X,Y)/g(X,Y) =0$.  

Suppose that $\lambda\cdot f(X,Y)\in\KK[X,Y]$ for some $\lambda\in\overline{\KK}^*$. Then the curve $\mathcal{F}$ is said to be defined over $\KK$, or $\KK$-rational, and we denote by $\mathcal{F}(\KK)=\mathcal{F}\cap\PG(2,\KK)$ the set of its $\KK$-rational points.
A curve defined by a non-constant divisor of $f(X,Y)$ is called a component of $\mathcal{F}$. We call $\mathcal{F}$ reducible or irreducible over $\KK$ according respectively to $f(X,Y)$ being reducible or irreducible over $\KK$; we call $\mathcal{F}$ absolutely irreducible if $f(X,Y)$ is irreducible over $\overline{\KK}$.
The number of points of absolutely irreducible curves defined over a finite field is bounded by the following simplified version of the Hasse-Weil bound.
\begin{theorem}\label{th:hasse-weil}{\rm (Hasse-Weil bound, see \cite[Equation 9.46]{HKT})}
Let $\mathcal{F}$ be an absolutely irreducible plane curve of degree $d$ defined over a finite field $\fqn$. Then
\[ |\#\mathcal{F}(\fqn)-(q^n+1)|\leq (d-1)(d-2)\sqrt{q^n}. \]
\end{theorem}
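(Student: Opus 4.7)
The statement is a simplified form of the Hasse--Weil bound, so the plan is to reduce it to the Riemann Hypothesis for curves over finite fields (Weil's theorem) applied to the smooth projective model of $\mathcal{F}$, and then translate the bound back to the plane model via the Pl\"ucker genus formula.

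First, I would pass from the possibly singular plane curve $\mathcal{F}$ to its normalization $\tilde{\mathcal{F}}$, a smooth projective absolutely irreducible curve over $\fqn$ birational to $\mathcal{F}$, and let $g$ denote its geometric genus. The central input is Weil's theorem: the number $\tilde{N}$ of $\fqn$-rational places of the function field $\fqn(\tilde{\mathcal{F}})$ satisfies
\[
|\tilde{N}-(q^n+1)|\leq 2g\sqrt{q^n}.
\]
This is the deep step; it can be proved via Stepanov's elementary method, via the cohomological structure of the zeta function, or via Weil's original argument using the intersection pairing on $\tilde{\mathcal{F}}\times\tilde{\mathcal{F}}$.

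Second, I would invoke the Pl\"ucker genus formula for plane curves, according to which the geometric genus of the normalization of an absolutely irreducible plane curve of degree $d$ satisfies
\[
g=\frac{(d-1)(d-2)}{2}-\sum_{P\in\mathrm{Sing}(\mathcal{F})}\delta_P \leq \frac{(d-1)(d-2)}{2},
\]
where $\delta_P\geq 1$ is the local singularity invariant at $P$. Third, I would relate $\#\mathcal{F}(\fqn)$ to $\tilde{N}$: a smooth $\fqn$-rational point of $\mathcal{F}$ corresponds bijectively to a unique $\fqn$-rational place of $\tilde{\mathcal{F}}$, while the discrepancy $|\#\mathcal{F}(\fqn)-\tilde{N}|$ contributed by singular points of $\mathcal{F}$ is controlled by the same local invariants $\delta_P$. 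Combining this with Weil's inequality from the first step gives the stated constant $(d-1)(d-2)\sqrt{q^n}$, by trading the sharper coefficient $2g$ for one depending only on the degree $d$ of the plane model.

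The main obstacle is, of course, Weil's theorem itself, which does not admit a truly elementary proof and is the substantive content of any textbook treatment such as \cite{HKT}. Once it is in hand, the remaining bookkeeping linking $\#\mathcal{F}(\fqn)$ to $\tilde N$ via the singularity data is routine: the singularities simultaneously lower $g$ and inflate the point--place discrepancy, and both effects fit comfortably inside the uniform bound $(d-1)(d-2)\sqrt{q^n}$.
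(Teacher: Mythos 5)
The paper does not prove this statement at all: it is quoted verbatim from \cite[Equation 9.46]{HKT} as an external black box, so there is no internal proof to compare against. Your sketch is the standard derivation of exactly this form of the bound (Weil's theorem for the normalization, the bound $g\leq\frac{(d-1)(d-2)}{2}$, and an Aubry--Perret-type comparison in which the point--place discrepancy $\sum_P\delta_P=\frac{(d-1)(d-2)}{2}-g$ is absorbed because $\frac{(d-1)(d-2)}{2}-g\leq 2\bigl(\frac{(d-1)(d-2)}{2}-g\bigr)\sqrt{q^n}$), and it is correct modulo the deep input of Weil's theorem itself, which you rightly flag as the non-elementary step.
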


We now recall a Galois-theoretic tool about global function fields, which are the algebraic counterpart of algebraic curves over a finite field; for more details we refer to \cite{Sti}.

Let $F/\fqn$ be a global function field with full constant field $\fqn$, that is, $F$ is an extension field of $\fqn$ of transcendence degree $1$ such that any element of $F\setminus\fqn$ is transcendent over $\fqn$.
Let $M$ be a finite Galois extension of $F$, $K=\overline{\fq}\cap M$ be the full constant field of the function field $M/\fqn$, $g_M$ be the genus of $M$, and $K F$ denote the compositum of $K$ and $F$.
The Galois group $G^{\rm arith}={\rm Gal}(M/F)$ is also called the arithmetic Galois group of $M/F$, while the Galois group $G^{\rm geom}={\rm Gal}(M/K F)$ of $M/K F$ is called the geometric Galois group of $M/F$.
Let $\sigma\in G^{\rm arith}$ be an element acting on $K$ as the Frobenius automorphism of $K/\fqn$, i.e. $\phi:=\sigma\!\mid_K\colon \lambda\mapsto \lambda^{q^n}$, and let $\Gamma_{\sigma}\subseteq G^{\rm arith}$ be the conjugacy class of $\sigma$.
Let $\mathcal{P}$ be a place of $F$ of degree $1$ with valuation ring 
$\mathcal{O}_{\mathcal{P}}$ and $\mathcal{R}$ be a place of $M$ lying over $P$ with valuation ring $\mathcal{O}_{\mathcal{R}}$. Then $\mathcal{O}_{\mathcal{P}}/\mathcal{P}$
 and $\mathcal{O}_{\mathcal{R}}/\mathcal{R}$ are naturally isomorphic respectively to 
 $\fqn$ and to an intermediate field of $K/\fqn$, and hence 
 $\phi\in{\rm Gal}\left(\mathcal{O}_{\mathcal{R}}/\mathcal{R}\colon\mathcal{O}_{\mathcal{P}}/\mathcal{P}\right)$.
Since the elements of the decomposition group $D(\mathcal{R}|\mathcal{P})=\{\sigma \in G^\mathrm{arith} \colon \sigma (\mathcal{R})=\mathcal{R}\}$ naturally act as automorphisms of $\mathcal{O}_{\mathcal{R}}/\mathcal{R}$, we can consider the subgroup $D_{\phi}(\mathcal{R}|\mathcal{P})$ of those elements of $D(\mathcal{R}|\mathcal{P})$ acting on $\mathcal{O}_{\mathcal{R}}/\mathcal{R}$ as $\phi\in{\rm Gal}\left(\mathcal{O}_{\mathcal{R}}/\mathcal{R}\colon\mathcal{O}_{\mathcal{P}}/\mathcal{P}\right)$, and define
\[
w_{\mathcal{P}}(\sigma)=\frac{\#(D_{\phi}(\mathcal{R}|\mathcal{P})\cap\Gamma_{\sigma})}{\#D_{\phi}(\mathcal{R}|\mathcal{P})\cdot\#\Gamma_{\sigma}}\in[0,1].
\]
Denote by $P^1(F)$ the set of places of $F/\fqn$ of degree $1$. Notice that, if $\mathcal{P}\in P^1(F)$ is unramified under $M$, then either $D_{\phi}(\mathcal{R}|\mathcal{P})\subseteq\Gamma_{\sigma}$ or $D_{\phi}(\mathcal{R}|\mathcal{P})\cap\Gamma_{\sigma}=\emptyset$ according respectively to $\sigma$ being a Frobenius at $\mathcal{P}$ or not.
Therefore
\[
\sum_{\mathcal{P}\in P^1(F)} w_{\mathcal{P}}(\sigma)= \sum_{{\rm ramified }\;\mathcal{P}\in P^1(F)} w_{\mathcal{P}}(\sigma) + \sum_{\substack{{\rm unramified}\;\mathcal{P}\in P^1(F),\\
\sigma\;{\rm is\; a\; Frobenius\; at}\;\mathcal{P}
}} \frac{1}{\#\Gamma_{\sigma}}
\]
and hence Chebotarev density theorem can be stated as follows.

\begin{theorem}{\rm (Chebotarev density theorem, see \cite[Remark 2.3]{FM2020b})}\label{th:chebotarev}
With the same notation as above, denote by $U$ be the number of places of $F/\fqn$ of degree $1$ at which $\sigma$ is a Frobenius and which are unramified under $M$. Then
\[
\Bigg| U + \sum_{{\rm ramified }\;\mathcal{P}\in P^1(F)} \#\Gamma_{\sigma}\cdot w_{\mathcal{P}}(\sigma)  - \frac{\#\Gamma_{\sigma}}{\# G^{\rm geom}}\cdot(q^n+1) \Bigg|  \leq \frac{2\cdot\#\Gamma_{\sigma}}{\# G^{\rm geom}}\cdot g_M\cdot\sqrt{q^n}.
\]
\end{theorem}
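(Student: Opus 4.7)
The plan is to follow the classical character-theoretic proof of effective Chebotarev density, using Artin L-functions and the Riemann hypothesis for function fields (Weil's theorem) in place of the analytic input from the Dedekind zeta function that one would use over $\mathbb{Q}$.

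First I would use orthogonality of irreducible characters of $G^{\rm arith}$ to rewrite the indicator of the conjugacy class $\Gamma_\sigma$: for any $\tau \in G^{\rm arith}$,
\[
\mathbf{1}_{\Gamma_\sigma}(\tau) = \frac{\#\Gamma_\sigma}{\#G^{\rm arith}} \sum_{\chi} \overline{\chi(\sigma)}\,\chi(\tau),
\]
where the sum runs over irreducible characters of $G^{\rm arith}$. Applied to the Frobenius element at each unramified degree-$1$ place of $F$, and after averaging over the decomposition/inertia data at ramified places via the very definition of $w_{\mathcal{P}}(\sigma)$, this rewrites the quantity $U + \sum_{{\rm ram}} \#\Gamma_\sigma\cdot w_{\mathcal{P}}(\sigma)$ as
\[
\frac{\#\Gamma_\sigma}{\#G^{\rm arith}} \sum_{\chi} \overline{\chi(\sigma)}\, A_\chi,
\]
where $A_\chi$ is the place-counting sum twisted by $\chi$ evaluated through the appropriate local Euler factors (with the inertia truncation absorbing exactly the ramified part on the left-hand side).

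Next I would attach to each $\chi$ the Artin L-function $L(T,\chi,M/F)$ and read off $A_\chi$ from the $T$-coefficient of its logarithmic derivative at $T = q^{-n}$. For the trivial character, $L(T,\mathbf{1})$ agrees with the zeta function $Z_F(T)$ up to finitely many ramification factors, and its pole contribution supplies the main term $q^n + 1$. For any non-trivial $\chi$, $L(T,\chi)$ is a polynomial in $T$ of degree controlled by $2g_M$ plus conductor degree, and Weil's Riemann hypothesis for curves forces all of its inverse roots to have absolute value $\sqrt{q^n}$; comparing coefficients then yields a bound of the shape $|A_\chi| \leq 2 g_M \sqrt{q^n}$.

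Finally I would assemble the estimates. Since only characters of $G^{\rm arith}$ whose restriction to $G^{\rm arith}/G^{\rm geom} \cong \mathrm{Gal}(K/\fqn)$ agrees with $\phi$ on the quotient contribute non-trivially to counting Frobenius elements in the coset of $\sigma$, the effective density denominator becomes $\#G^{\rm geom}$ rather than $\#G^{\rm arith}$. The trivial-character contribution then produces exactly $\frac{\#\Gamma_\sigma}{\#G^{\rm geom}}(q^n+1)$, while the remaining contributions combine, via the column-orthogonality identity $\sum_{\chi} |\chi(\sigma)|^2 = \#G^{\rm arith}/\#\Gamma_\sigma$, into the stated error $\frac{2\#\Gamma_\sigma}{\#G^{\rm geom}}\, g_M \sqrt{q^n}$. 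The main obstacle I foresee is the bookkeeping at ramified places and in the constant-field extension $K/\fqn$: one has to match the definition of $w_{\mathcal{P}}(\sigma)$ through $D_\phi(\mathcal{R}|\mathcal{P}) \cap \Gamma_\sigma$ with the truncated local factor of each $L(T,\chi)$ at a ramified prime, and verify that the degree bound $2g_M$ actually dominates the conductor contributions so that no extra terms leak into the final estimate.
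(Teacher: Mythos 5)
First, a point of reference: the paper does not prove Theorem \ref{th:chebotarev} at all; it is quoted from \cite[Remark 2.3]{FM2020b}, whose argument (in the spirit of Kosters' short proof of the function-field Chebotarev theorem) is a direct counting one. There one counts degree-one places $\mathcal{R}$ of $M$ as fixed points of a twisted Frobenius on the curve associated to $M$, sorts them by the conjugacy class of the corresponding Frobenius substitution, and applies the Hasse--Weil bound for $M$ exactly once; this produces the constant $2g_M$ on the nose and makes the ramified contribution appear automatically as the weights $w_{\mathcal{P}}(\sigma)$ defined through $D_\phi(\mathcal{R}|\mathcal{P})\cap\Gamma_\sigma$. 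Your plan follows instead the classical Artin $L$-function route (character orthogonality, logarithmic derivatives, Weil's Riemann hypothesis for each $L(T,\chi)$). That route can be made to work, but it is genuinely different from the argument behind the cited statement, and it trades one application of Hasse--Weil for a per-character analysis that must then be reassembled.

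As written, the reassembly contains a concrete gap. The bound $|A_\chi|\leq 2g_M\sqrt{q^n}$ is not correct for an individual nontrivial $\chi$: what Weil gives is $|A_\chi|\leq \deg L(T,\chi)\cdot\sqrt{q^n}$, and only the \emph{weighted sum} $\sum_{\chi\neq 1}|\chi(\sigma)|\deg L(T,\chi)\leq\sum_{\chi\neq 1}\chi(1)\deg L(T,\chi)$ is controlled by $2g_M$ --- and that control comes from the conductor--discriminant formula combined with Riemann--Hurwitz, not from the column-orthogonality identity $\sum_\chi|\chi(\sigma)|^2=\#G^{\rm arith}/\#\Gamma_\sigma$ that you invoke in the last step. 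Moreover, the characters of $G^{\rm arith}$ that are trivial on $G^{\rm geom}$ have $L$-functions whose inverse roots have absolute value $1$ or $q^n$ rather than $\sqrt{q^n}$; they must be separated out and shown to contribute exactly the main term $\frac{\#\Gamma_\sigma}{\#G^{\rm geom}}(q^n+1)$, which is the mechanism by which $\#G^{\rm geom}$ replaces $\#G^{\rm arith}$ --- your sketch gestures at this but does not carry it out. Finally, the identification of $w_{\mathcal{P}}(\sigma)$ (a purely group-theoretic quantity built from $D_\phi(\mathcal{R}|\mathcal{P})\cap\Gamma_\sigma$) with the inertia-truncated local Euler factors at ramified places is precisely what distinguishes this formulation from the textbook one, and it is asserted rather than verified. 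Until the summation over characters is done via conductor--discriminant plus Riemann--Hurwitz and the ramified/non-geometric bookkeeping is closed, the proposal does not yet establish the inequality in the stated form.
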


The following lemma is a classical fact from algebraic number theory, whose proof can be found for example in \cite{guralnick2007exceptional}.

\begin{lemma}\label{orbits}
Let $L/F$ be a finite separable extension of global function fields, $M$ be its Galois closure, $G= {\rm Gal}(M/K)$ be its (arithmetic) Galois group and $H={\rm Hom}_{F}(L,M)$ be the set of field $F$-homomorphisms from $L$ to $M$. 
Let $\mathcal P$ be a place of $F$, $Q$ be the set of places of $L$ lying above $\mathcal P$, and $\mathcal R$ be a place of $M$ lying above $\mathcal P$.

Then there is a natural bijection $\beta$ between $Q$ and the set of orbits of $H$ under the natural action of the decomposition group $D(\mathcal R|\mathcal P)$.
For any $\mathcal Q \in Q$, the orbit $\beta(\mathcal Q)$ has size
\[|\beta(\mathcal Q)|=e(\mathcal Q|\mathcal P)\cdot f(\mathcal Q|\mathcal P),\]
where $e(\mathcal Q|\mathcal P)$ and $f(\mathcal Q|\mathcal P)$ are respectively the ramification index and the relative degree of $\mathcal Q$ over $\mathcal P$.
\end{lemma}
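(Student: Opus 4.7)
The plan is to reduce the statement to a pure group-theoretic double-coset calculation via the standard Galois dictionary. Set $H'=\mathrm{Gal}(M/L)\leq G$. Since $M$ contains the Galois closure of $L/F$, every $F$-embedding $\iota\colon L\to M$ extends to an automorphism of $M$, and the assignment $\tau\mapsto\tau|_L$ induces a bijection $G/H'\longrightarrow H$ by the fundamental theorem of Galois theory. Under this identification, the natural action of $D=D(\mathcal{R}|\mathcal{P})$ on $H$ by post-composition, $\sigma\cdot\iota=\sigma\circ\iota$, transports to the left-multiplication action of $D$ on $G/H'$. Hence the $D$-orbits on $H$ are parameterized by the double cosets in $D\backslash G/H'$.

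The next step is to match these double cosets with $Q$. Since $M/F$ is Galois, $G$ acts transitively on the set of places of $M$ above $\mathcal{P}$ with stabilizer $D$, so these places are in bijection with $G/D$; and the places of $L$ above $\mathcal{P}$ are precisely the $H'$-orbits on the places of $M$ above $\mathcal{P}$, hence correspond to $H'\backslash G/D$. The inversion map $\tau\mapsto\tau^{-1}$ furnishes a canonical bijection $D\backslash G/H' \leftrightarrow H'\backslash G/D$, and composing produces the required bijection $\beta\colon D\tau H'\mapsto\tau^{-1}(\mathcal{R})\cap L$; a short verification shows this is independent of the representative $\tau$.

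The cardinality formula then follows from the orbit-stabilizer theorem. The stabilizer in $D$ of the coset $\tau H'$ is $D\cap \tau H'\tau^{-1}$, whose order coincides with $|H'\cap \tau^{-1}D\tau|$ by conjugation. The latter group is exactly the decomposition group in $M/L$ of the place $\tau^{-1}(\mathcal{R})$ of $M$ over the place $\mathcal{Q}=\tau^{-1}(\mathcal{R})\cap L$ of $L$, and therefore has order $e(\tau^{-1}(\mathcal{R})|\mathcal{Q})\cdot f(\tau^{-1}(\mathcal{R})|\mathcal{Q})$. Combined with $|D|=e(\tau^{-1}(\mathcal{R})|\mathcal{P})\cdot f(\tau^{-1}(\mathcal{R})|\mathcal{P})$ and the multiplicativity of ramification indices and residue degrees in the tower $M\supset L\supset F$, the orbit size $|D|/|D\cap \tau H'\tau^{-1}|$ collapses to $e(\mathcal{Q}|\mathcal{P})\cdot f(\mathcal{Q}|\mathcal{P})$.

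The main obstacle I anticipate is bookkeeping rather than substance: one must be careful to keep left- and right-actions straight, to choose the representatives $\tau$ coherently so that $\beta$ is genuinely well-defined, and to track which decomposition group lives in which tower when invoking the multiplicativity of $e$ and $f$. Once these orientations are fixed, every step is classical and essentially forced.
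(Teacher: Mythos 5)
Your argument is correct and complete: the identification of $H$ with $G/H'$ for $H'=\mathrm{Gal}(M/L)$, the passage to double cosets $D\backslash G/H'\leftrightarrow H'\backslash G/D$ via inversion, and the orbit--stabilizer computation using $|D(\mathcal R|\mathcal P)|=e(\mathcal R|\mathcal P)f(\mathcal R|\mathcal P)$ together with multiplicativity of $e$ and $f$ in the tower $M\supset L\supset F$ all check out, including the well-definedness of $\beta$ on representatives. The paper itself gives no proof --- it calls the lemma a classical fact and cites Guralnick--Tucker--Zieve --- and your double-coset argument is exactly the standard proof that citation points to, so there is nothing to add beyond noting that the paper's ``$G=\mathrm{Gal}(M/K)$'' is a typo for $\mathrm{Gal}(M/F)$, which you have implicitly corrected.
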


\subsection{Linear sets}


Let $V$ be an $r$-dimensional $\fqn$-vector space and $L=L_U$ be an $\fq$-linear set of rank $h$ in $\Lambda=\PG(V,\fqn)$.
The \emph{maximum field of linearity} of $L_U$ is the largest subfield $\F_{q^m}$ of $\fqn$ such that $L_U$ is $\F_{q^m}$-linear.
If $N_i$ denotes the number of points of $\Lambda$ having weight $i\in \{0,\ldots,h\}$ in $L_U$, the following relations hold:
\[
|L_U|\leq \frac{q^h-1}{q-1},\qquad
\sum_{i=0}^{h}N_i\cdot \frac{q^i-1}{q-1} = \frac{q^h-1}{q-1}.
\]
Furthermore, $L_U$ is called \emph{scattered} if it has the maximum number $\frac{q^h-1}{q-1}$ of points, or equivalently, if all points of $L_U$ have weight one.

If two $\fq$-subspaces $U_1$ and $U_2$ of $V$ belong to the same orbit of $\Gamma\mathrm{L}(r,q^n)$, then $L_{U_1}$ and $L_{U_2}$ are clearly $\mathrm{P}\Gamma \mathrm{L}$-equivalent.
This sufficient condition is not necessary in general: there exist $\fq$-linear sets $L_{U_1}$ and $L_{U_2}$ in the same orbit of ${\rm P\Gamma L}(r,q^n)$ such that $U_1$ and $U_2$ are not in the same orbit of ${\rm \Gamma L}(r,q^n)$. 
The $\fq$-linear set $L_U$ is called \emph{simple} if every $\fq$-subspace $W$ of $V$ satisfying $L_U=L_W$ is in the same $\Gamma\mathrm{L}(r,q^n)$-orbit as $U$; 
examples of simple linear sets are given by the subgeometries of $\PG(V,\fqn)$. 
For further details on the equivalence issue for linear sets we refer to \cite{CsMP}.

\smallskip

The linear sets considered in this work are $\fq$-linear sets $L$ of rank $n$ in $\PG(1,q^n)$.
Since ${\rm P\Gamma L}(1,q^n)$ is $3$-transitive on $\PG(1,q^n)$, we can assume up to equivalence that $L$ does not contain the point $\langle(0,1)\rangle_{\fqn}$.
As already noticed in the introduction, this implies for any non-negative integer $t$ that $L$ equals $L_{f,t}$ as defined in \eqref{eq:defLft} for some $q$-polynomial $f(x)\in\fqn[x]$.
Without restriction, the index $t$ will be always be assumed to be in $\{0,\ldots,n-1\}$.
Also, when $t=0$, we denote $L_{f,0}$ simply by $L_f$.

An $\fq$-linear set $L_{f,t}$ is said to be of \emph{pseudoregulus type} if it is ${\rm P\Gamma L}$-equivalent to $L_{x^{q^s}}$, where $s$ satisfies $\gcd(s,n)=1$, see \cite{LMPT:14}.
In this case, $L_{f,t}$ is scattered.

Consider the non-degenerate symmetric bilinear form of $\F_{q^n}$ over $\F_q$ defined for every $x,y \in \F_{q^n}$ by
\begin{equation}\label{eq:bilform} \la x,y\ra= \mathrm{Tr}_{q^n/q}(xy). \end{equation}
\noindent The \emph{adjoint} $\hat{f}$ of the $q$-polynomial $\displaystyle f(x)=\sum_{i=0}^{n-1} a_ix^{q^i} \in \F_{q^n}[x]$ with respect to the bilinear form $\la\cdot,\cdot\ra$ is
\[ \hat{f}(x)=\sum_{i=0}^{n-1} a_i^{q^{n-i}}x^{q^{n-i}}, \]
and is the unique function over $\fqn$ satisfying
\[ \mathrm{Tr}_{q^n/q}(yf(z))=\mathrm{Tr}_{q^n/q}(z\hat{f}(y)), \]
for every $y,z \in \F_{q^n}$.

\begin{proposition}\label{prop:adjoint}{\rm (\cite[Lemma 2.6]{BGMP},\cite[Lemma 3.1]{CsMP})}
Let $f(x)$ be a $q$-polynomial over $\fqn$ and $\hat{f}(x)$ be its adjoint w.r.t. the bilinear form \eqref{eq:bilform}. Then $L_f=L_{\hat{f}}$.
\end{proposition}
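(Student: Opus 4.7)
The plan is to realise $L_f$ as $L_{U_f}$ for the $\F_q$-subspace
$U_f=\{(x,f(x))\colon x\in\F_{q^n}\}\subset\F_{q^n}^2$, and then exhibit $U_{\hat f}$ as the orthogonal complement of $U_f$ with respect to a well chosen non-degenerate $\F_q$-bilinear form on $\F_{q^n}^2$. The form I would use is the symplectic form
\[
b\bigl((x_1,y_1),(x_2,y_2)\bigr)\;=\;\mathrm{Tr}_{q^n/q}(x_1 y_2 - y_1 x_2).
\]
With this choice, the defining relation of the adjoint, $\mathrm{Tr}_{q^n/q}(u\,f(x))=\mathrm{Tr}_{q^n/q}(x\,\hat f(u))$, together with the non-degeneracy of the trace form on $\F_{q^n}/\F_q$, gives at once
\[
U_f^{\perp}\;=\;\{(u,v)\in\F_{q^n}^2\colon \mathrm{Tr}_{q^n/q}(u\,f(x)-v\,x)=0\ \forall x\in\F_{q^n}\}\;=\;\{(u,\hat f(u))\colon u\in\F_{q^n}\}\;=\;U_{\hat f}.
\]

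The core of the proof is then the general statement: if $U$ is any $\F_q$-subspace of $\F_{q^n}^2$ of $\F_q$-dimension $n$, then $L_U=L_{U^\perp}$ in $\PG(1,q^n)$. To see this, pick a point $P=\langle(a,b)\rangle_{\F_{q^n}}$ and let $T=\F_{q^n}\cdot(a,b)$, an $n$-dimensional $\F_q$-subspace. The symplectic form $b$ vanishes identically on $T$ since $b((a,b)\alpha,(a,b)\beta)=\mathrm{Tr}_{q^n/q}(ab\alpha\beta-ab\alpha\beta)=0$, so $T\subseteq T^{\perp}$; as $\dim_{\F_q}T^{\perp}=2n-n=n$, we get $T=T^{\perp}$. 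Using the standard dimension identity
\[
\dim_{\F_q}(U\cap T)\;=\;2n-\dim_{\F_q}(U^{\perp}+T^{\perp})\;=\;\dim_{\F_q}(U^{\perp}\cap T),
\]
so $U\cap T\ne\{0\}$ if and only if $U^{\perp}\cap T\ne\{0\}$, giving $P\in L_U\iff P\in L_{U^\perp}$.

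Combining the two steps, $L_f=L_{U_f}=L_{U_f^{\perp}}=L_{U_{\hat f}}=L_{\hat f}$. The only nontrivial technical point is verifying that every $\F_{q^n}$-line in $\F_{q^n}^2$ is totally isotropic for the chosen symplectic form, which is immediate from the antisymmetry of the expression $x_1y_2-y_1x_2$; after that the duality identity on dimensions does all the work. As a bonus, the argument also shows that the weights of every point in $L_f$ and $L_{\hat f}$ coincide, a useful fact for the rest of the paper.
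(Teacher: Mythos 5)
Your proof is correct: the identification $U_f^{\perp}=U_{\hat f}$ via the alternating form $\mathrm{Tr}_{q^n/q}(x_1y_2-y_1x_2)$, together with the observation that every $\F_{q^n}$-line is its own orthogonal complement, is exactly the argument behind this statement, which the paper itself only cites (from \cite{BGMP} and \cite{CsMP}) without proof. Your closing remark that the weights of corresponding points coincide is also the precise content of the cited lemmas, so nothing is missing.
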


\begin{remark}\label{rk:weight}
Let $f(x)$ be a $q$-polynomial over $\fqn$.
For any $m\in\fqn$, the weight on the point $\langle(1,m)\rangle_{\fqn}\in\PG(1,\fqn)$ in $L_{f,t}$ satisfies
\[ w_{L_{f,t}}(\langle (1,m) \rangle_{\fqn})= \dim_{\fq} \ker(f(x)-mx^{q^t}). \]
Clearly, $w_{L_{f,t}}(\langle(0,1)\rangle_{\fqn})=0$.
\end{remark}


For further details on linear sets see \cite{LavVdV,Polverino}.

\section{On the weight distribution of \texorpdfstring{$r$}{r}-fat polynomials}\label{sec:difficile}

Through this section, $f(x)=\sum_{j=0}^k a_j x^{q^j}\in\fqn[x]$ is an $r$-fat polynomial of index $t$ and $q$-degree $k<n$.
For any $i\geq1$, $N_i$ denotes the number of points of weight $i$ in $L_{f,t}$.

Inspired by \cite{BZ,BM,FM2020}, we provide bounds on the weight distribution of $f(x)$ and $L_{f,t}$, i.e. on $r$ and the $N_i$'s, involving $q,n,k,t$.
In particular, we prove non-existence results for exceptional $r$-fat polynomials, which generalize non-existence results for exceptional scattered polynomials proved in \cite{BZ,BM,FM2020}.

\begin{remark}\label{rem:assumptions1}
If $f(x)=a_k x^{q^{k}}$, then an easy change of variable shows $L_{f,t}=L_{a_k x^{q^{k-t}},0}$.
Hence, the maximum field of linearity of $L_{f,t}$ is $\mathbb{F}_{\bar q}$ with $\bar{q}=q^{\gcd(n,k-t)}$ and $L_{f,t}$ is a scattered $\mathbb{F}_{\bar q}$-linear set of pseudoregulus type.
Thus, the number of points of $L_{f,t}$ is $\frac{q^n-1}{\bar{q}-1}$, all of them having weight $\gcd(n,k-t)$.
We will then assume without restriction that $f(x)$ is not a monomial.
\end{remark}

\begin{remark}\label{rem:assumptions2}
In the literature, a polynomial $f(x)$ is called \emph{$t$-normalized} if the following conditions hold (see \cite[p. 511]{BZ}):
\begin{itemize}
    \item $f(x)$ is monic;
    \item the coefficient $a_t$ of $x^{q^t}$ in $f(x)$ is zero;
    \item if $t>0$, then the coefficient $a_0$ of $x$ in $f(x)$ is nonzero, i.e. $f(x)$ is separable.
\end{itemize}
For any index $t$, if $|L_{f,t}|>1$ then the weight distribution of $L_{f,t}$ is equivalent to the weight distribution of $L_{f^{\prime},t^{\prime}}$ for some $t^\prime$-normalized polynomial $f^\prime(x)\in\fqn[x]$ and index $t^\prime$.

In fact, the weight of $\langle(1,m)\rangle_{\fqn}$ in $L_{f,t}$ is equal to the weight of $\langle(1,m/a_k)\rangle_{\fqn}$ in $L_{\frac{1}{a_k}f(x),t}$, and to the weight of $\langle(1,m-a_t)\rangle_{\fqn}$ in $L_{f(x)-a_tx^{q^t},t}$; also, $L_{f,t}=L_{g,t^\prime}$, where $t^\prime=t-\min\{t,\min\{j\colon a_j\ne0\}\}$ and $g(x)=f(x^{q^{t^\prime}})$.
\end{remark}

\begin{theorem}\label{Th:Curve}
Let $t\geq0$ and $f(x)=\sum_{j=0}^{k}a_j x^{q^j}\in\fqn[x]$ be a monic $\fq$-linearized polynomial of $q$-degree $k<n$ such that $a_t=0$ and $f(x)$ is not a monomial.
Let $v=\min\{j\colon a_j\ne0\}$, and assume that $v=0$ whenever $t>0$.
Let $h=\dim_{\fq}\ker(f)$.
For any $i\geq 1$, let $N_i$ be the number of points having weight $i$ in $L_{f,t}$.

If $t=0$, then
\[
\sum_{i=1}^{n-1}N_i(q^i-1)(q^i-q)\geq q^n - (q^k-q-1)(q^k-q-2)\sqrt{q^n} -2q^{k-v}-q^k+q^{h+1}-q^h+1.
\]
If one of the following two conditions hold:
\begin{itemize}
    \item[(i)] $t=1$ and $k\geq 3$;
    \item[(ii)] $t\geq2$, with $k \geq 3t$ if  $t \mid k$ or $k \geq 2t-1$ if $t \nmid k$, and  either
    \begin{itemize}
        \item $f(x)=a_0 x+a_1 x^q+\sum_{j>t}a_j x^{q^j}$ with $k \geq t+2$, or 
        \item $f(x)=a_0 x+\sum_{j>t}a_j x^{q^j}$;
\end{itemize}
\end{itemize}
then
\[
\sum_{i=1}^{n-1}N_i(q^i-1)(q^i-q)\geq q^n - (q^k+q^t-q-2)(q^k+q^t-q-3)\sqrt{q^n} -q^k-q^t+q+1.
\]
\end{theorem}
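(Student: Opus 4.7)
The plan is to translate the weight distribution of $L_{f,t}$ into a count of affine $\fqn$-rational points of $\C$, and to lower-bound this count via the Hasse--Weil bound applied to an absolutely irreducible $\fqn$-rational component of $\C$. For the geometric translation, observe that the map $\psi\colon\fqn^*\to L_{f,t}$, $x\mapsto\langle(x^{q^t},f(x))\rangle_{\fqn}$, has fiber of size exactly $q^i-1$ above a point $P=\langle(1,m_P)\rangle_{\fqn}$ of weight $i$ in $L_{f,t}$, since such a fiber consists of the non-zero elements of the $i$-dimensional $\fq$-subspace $\ker(f(x)-m_P x^{q^t})$. The affine $\fqn$-rational points of $\mathcal{N}$ off the coordinate axes are then the pairs $(x,y)\in(\fqn^*)^2$ with $\psi(x)=\psi(y)$. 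Above a weight-$i$ point there are $(q^i-1)^2$ such pairs, of which $(q-1)(q^i-1)$ satisfy $y\in\fq^* x$ and are annihilated by the factor $X^qY-XY^q$ removed in passing from $\mathcal{N}$ to $\C$. Summing over points shows that the number of affine $\fqn$-rational points of $\C$ off the axes equals $\sum_{i\geq 1}N_i(q^i-1)(q^i-q)$.

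Next, adapting the techniques of \cite{BM,BZ}, I would produce under the stated hypotheses an absolutely irreducible component $\mathcal{G}$ of $\C$ defined over $\fqn$. When $t=0$ this is essentially the statement established in those papers; for $t\geq 1$, the case distinctions in (i) and (ii) are tailored precisely to rule out pathological factorizations of the numerator of $\C$, via a careful singularity analysis of $\C$ at the $\fqn$-rational points lying on $X=0$, on $Y=0$, and at infinity. Applying Theorem~\ref{th:hasse-weil} to $\mathcal{G}$, which has degree $d$ equal to $q^k-q$ when $t=0$ and to $q^k+q^t-q-1$ otherwise, yields
\[
\#\C(\fqn)\ \geq\ \#\mathcal{G}(\fqn)\ \geq\ q^n+1-(d-1)(d-2)\sqrt{q^n},
\]
which produces the leading Hasse--Weil term in both of the displayed inequalities.

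Finally, one subtracts the $\fqn$-rational points of $\C$ not corresponding to fiber pairs off the axes. These lie either on $X=0$, $Y=0$, or at infinity, and the parameters $v,h$ control their count. The axis $X=0$ (respectively $Y=0$) meets $\mathcal{N}$ in the zeros of $f(Y)=0$ (resp.\ $f(X)=0$), whose number is $q^h$, while the points at infinity of $\mathcal{N}$ split into branches whose total count is governed by the exponents $k$ and $v$. A direct calculation gives the correction $-2q^{k-v}-q^k+q^{h+1}-q^h+1$ in the $t=0$ case; in cases (i)--(ii) the separability assumption $v=0$ together with the specific monomial structure of $f$ simplifies the axis contribution and yields the correction $-q^k-q^t+q+1$. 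The main obstacle is the absolute irreducibility step in case (ii): since the numerator of $\C$ has large degree and interacts non-trivially with $X^qY-XY^q$, ruling out factorizations over $\overline{\fq}$ whose components are all defined over proper extensions of $\fqn$ requires the delicate combinatorial hypotheses on $k,t$ and on the low-order coefficients of $f$ recorded in the theorem.
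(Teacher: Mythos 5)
Your proposal follows the same route as the paper---translate the weight distribution into a point count on $\C$, invoke the absolutely irreducible $\fqn$-rational component from \cite{BM,BZ}, apply Hasse--Weil, and subtract the extraneous points on the axes and at infinity---but there is a genuine gap in the counting step for $t=0$. You claim that the affine $\fqn$-rational points of $\C$ off the axes are exactly the pairs $(x,y)$ with $\psi(x)=\psi(y)$ and $y\notin\fq x$, the pairs with $y\in\fq^*x$ being ``annihilated'' by removing the factor $X^qY-XY^q$. But removing a non-repeated component only deletes those of its points that are \emph{non-singular} on $\mathcal{N}$; at a singular point of $\mathcal{N}$ the quotient curve still passes through. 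Since $a_0=a_t=0$ when $t=0$, the affine singular points of $\mathcal{N}$ other than the origin are exactly the pairs with $f(\bar x)=f(\bar y)=0$, i.e.\ the $(q^h-1)^2$ pairs lying over the weight-$h$ point $\langle(1,0)\rangle_{\fqn}$; all of them, including those with $\bar y\in\fq^*\bar x$, survive on $\C$. Hence that point contributes $(q^h-1)^2$ rather than $(q^h-1)(q^h-q)$ points of $\C$, and the excess $(q^h-1)(q-1)$ is precisely the origin of the $h$-dependent terms in the $t=0$ bound. Your count omits it, so the lower bound you would derive is stronger than the argument justifies by exactly this amount.

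A second, related inaccuracy: you say the axis $X=0$ meets $\mathcal{N}$ in the zeros of $f(Y)$, ``whose number is $q^h$''. In fact $X$ and $Y$ both divide $f(X)Y^{q^t}-f(Y)X^{q^t}$, so both axes are entirely contained in $\mathcal{N}$; what must be bounded is the number of axis points that remain on $\C$ after the division, which the paper estimates by $2(q^{k-v}-1)$---a quantity governed by $k$ and $v$, not by $h$. Your proposed derivation of the correction term $-2q^{k-v}-q^k+q^{h+1}-q^h+1$ therefore attributes the $h$-terms to the wrong source, and the stated constant would not drop out of the computation as you describe it. The $t>0$ part of your outline is sound: there the only affine singular point of $\mathcal{N}$ is the origin, the exact $(q^i-1)(q^i-q)$-to-one correspondence does hold, and the axes contribute nothing beyond the origin, which is exactly how the paper argues that case.
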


\begin{proof}
Recall from \cite{BZ} that two nonzero vectors $(\bar{x}^{q^t},f(\bar{x})),(\bar y^{q^t},f(\bar y))\in\mathbb{F}_{q^n}^2$ define the same point $P$ of $L_{f,t}$ if and only $(\bar x,\bar y)$ is an $\mathbb{F}_{q^n}$-rational affine point of the reducible plane curve $\mathcal{N}$ with affine equation
\[
\mathcal{N}\colon\quad f(X)Y^{q^t}-f(Y)X^{q^t}=0.
\]
Hence, each point $P$ of weight $w_{L_{f,t}}(P)=i\geq1$ in $L_{f,t}$ corresponds to exactly $(q^i-1)^2$ $\fqn$-rational affine points of $\mathcal{N}$, namely to the points $(\bar{x},\bar{y})$ where $\bar x$ and $\bar y$ are nonzero elements of the $i$-dimensional $\fq$-subspace $W_P$ of $\fqn$ defining $P$.

Let $\mathcal{C}$ be the plane curve with affine equation
\begin{equation}\label{Eq:Curva}
\mathcal{C}\colon\quad \frac{f(X)Y^{q^t}-f(Y)X^{q^t}}{X^q Y- X Y^q}=0.
\end{equation}

Suppose that $t=0$.
By direct computation, the affine point $(\bar x,\bar y)\ne(0,0)$ is singular for $\mathcal{N}$ if and only if $f(\bar x)=f(\bar y)=0$.

If $P\ne\langle(1,0)\rangle_{\fqn}\in L_{f,t}$ and $(\bar{x},\bar{y})\in\mathcal{N}$ corresponds to $P$, then $(\bar x,\bar y)$ is not singular for $\mathcal{N}$; hence, $(\bar x,\bar y)$ is a point of $\mathcal{C}$ if and only $\bar x,\bar y\in W_P^*$ and $\bar y/\bar x\notin \fq$. Thus, $P$ corresponds exactly to $(q^i-1)(q^i-q)$ points of $\mathcal{C}$.

If $P=\langle(1,0)\rangle_{\fqn}\in L_{f,t}$ and  $(\bar{x},\bar{y})\in\mathcal{N}$ corresponds to $P$, then $(\bar x,\bar y)$ is singular for $\mathcal{N}$ and hence $(\bar x,\bar y)$ is a point of $\mathcal{C}$ for any $\bar x,\bar y\in W_P^*$. Thus, $P$ corresponds exactly to $(q^h-1)^2$ points of $\mathcal{C}$.

The possible $\fqn$-rational affine points of $\mathcal{C}$ not corresponding to any point of $L_{f,t}$ are the following ones.
\begin{itemize}
    \item The origin $(0,0)$, which is a point of $\mathcal{C}$ if and only if $s>1$.
    \item The $\fqn$-rational affine points $(\bar x,\bar y)$ on the axes $X=0$ or $Y=0$ different from the origin and satisfying $f(\bar x)=f(\bar y)$, whose number is at most $2(q^{k-v}-1)$.
\end{itemize}
The $\fqn$-rational points at infinity of $\mathcal{C}$ are at most $\deg(\mathcal{C})=q^k-q$.

Summing up, the number of $\fqn$-rational points of $\mathcal{C}$ is at most
\begin{equation}\label{eq:quanti}
\sum_{i\geq1,\,i\ne h}N_i(q^i-1)(q^i-q)+(N_h-1)(q^h-1)(q^h-q)+(q^h-1)^2 +1+2(q^{k-s}-1)+q^k-q.
\end{equation}
By the results in \cite{BZ,BM}, $\mathcal{C}$ contains an absolutely irreducible component $\mathcal{X}$ defined over $\fqn$, whose degree is at most the degree of $\mathcal{C}$; see \cite[Theorems 3.1 and 3.5]{BZ}, \cite[Proposition 3.6 and Section 4]{BM}.
Therefore, the number of $\fqn$-rational points of $\mathcal{X}$ is upper bounded by the value in \eqref{eq:quanti}, and lower bounded by
$q^n+1-(q^k-q-1)(q^k-q-2)\sqrt{q^n}$
because of Theorem \ref{th:hasse-weil}.
This implies
\[
\sum_{i\geq1,\,i\ne h}N_i(q^i-1)(q^i-q)+(N_h-1)(q^h-1)(q^h-q)+(q^h-1)^2 +1+2(q^{k-v}-1)+q^k-q
\]
\[
\geq q^n+1-(q^k-q-1)(q^k-q-2)\sqrt{q^n},
\]
whence the claim for $t=0$ follows.

For $t>0$, the same arguments can be used. The main difference is that the origin is a $(q^t-q)$-fold point of $\mathcal{C}$, while no other affine point of $\mathcal{N}$ is singular.
Thus, each point of weight $i\geq1$ in $L_{f,t}$ corresponds to exactly $(q^i-1)(q^i-q)$ points of $\mathcal{C}$.
Also, $\mathcal{C}$ has no affine points on the axes $X=0$ or $Y=0$ other than the origin.
By the Hasse-Weil lower bound applied to an $\fqn$-rational absolutely irreducible component $\mathcal{X}$ of $\mathcal{C}$, the claim follows.
\end{proof}

\begin{corollary}\label{cor:rcurve}
If $f(x)$ is an $r$-fat polynomial of index $t$ and maximum weight $W\geq 2$ over $\fqn$ satisfying the assumptions of Theorem \ref{Th:Curve}, then
\begin{equation}\label{eq:rbound}
r\geq \frac{q^n-c_1\sqrt{q^n}-c_2}{q^{2W}-q^{W+1}-q^W+q},
\end{equation}
where
\[
c_1=(q^k+q^t-q-2)(q^k+q^t-q-3),\quad
c_2=\begin{cases}
2q^{k-v}+q^k-q^{h+1}+q^h-1 & \mbox{if  } t=0, \\
q^k+q^t-q-1 & \mbox{if  } t>0. \\
\end{cases}
\]
In particular:
\begin{itemize}
\item $f(x)$ is not exceptional $r$-fat of index $t$;
\item if one of the following holds:
\begin{itemize}
    \item $q\geq4$ and $\max\{k,t\}<\frac{n}{4}$, or
    \item $q=3$ and $\max\{k,t\}\leq\frac{n-2}{4}$, or
    \item $q=2$ and $\max\{k,t\}\leq \frac{n}{4}-1$,
\end{itemize}
then $r>1$, i.e. $L_{f,t}$ is not a $W$-club.
\end{itemize}
\end{corollary}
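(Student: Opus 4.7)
The plan is to combine the lower bound of Theorem \ref{Th:Curve} with a trivial upper bound on the same sum coming from the $r$-fat hypothesis. First I would note that the $i=1$ term of $\sum_{i=1}^{n-1}N_i(q^i-1)(q^i-q)$ vanishes, so only points of weight at least $2$ contribute; by hypothesis there are exactly $r$ such points and each has weight at most $W$, and since $i\mapsto(q^i-1)(q^i-q)$ is increasing in $i$ one obtains
\[
\sum_{i=1}^{n-1}N_i(q^i-1)(q^i-q) \;\leq\; r(q^W-1)(q^W-q) \;=\; r\bigl(q^{2W}-q^{W+1}-q^W+q\bigr).
\]
Comparing with Theorem \ref{Th:Curve} and observing that, when $t=0$, the substitution $q^t=1$ identifies $(q^k-q-1)(q^k-q-2)$ with $c_1$ and the error term $2q^{k-v}+q^k-q^{h+1}+q^h-1$ with $c_2$ of the statement, while the $t>0$ case matches the statement directly, one recovers \eqref{eq:rbound}.

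For the second bullet I would argue by contradiction. If $f(x)$ were exceptional $r$-fat of index $t$, it would be $r$-fat over $\mathbb{F}_{q^{nm}}$ for infinitely many $m\geq 1$. The hypotheses of Theorem \ref{Th:Curve} depend only on the coefficients of $f(x)$ and on the integers $k,v,t$ (the condition $k<nm$ being automatic for large $m$), hence they persist over every such extension. Applying \eqref{eq:rbound} with $n$ replaced by $nm$, and using that $c_1,c_2$ depend only on $q,k,t$ while the maximum weight over $\mathbb{F}_{q^{nm}}$ is uniformly bounded in terms of $k,t$ (so that the denominator stays bounded independently of $m$), the right-hand side tends to infinity as $m\to\infty$, contradicting the finiteness of $r$.

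For the last bullet I would rewrite the desired conclusion $r>1$, via \eqref{eq:rbound}, as
\[
q^n - c_1\sqrt{q^n} - c_2 \;>\; q^{2W}-q^{W+1}-q^W+q,
\]
and then verify it in each of the three regimes. Using $W\leq k\leq \max\{k,t\}$, $c_1\leq (q^k+q^t)^2\leq 4\,q^{2\max\{k,t\}}$, and a similarly crude bound on $c_2$ in terms of $q^{\max\{k,t\}}$, the dominant terms on the right-hand side are of orders $q^{2\max\{k,t\}+n/2}$ and $q^{2\max\{k,t\}}$, both of which are of smaller order than $q^n$ whenever $\max\{k,t\}<n/4$. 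The main obstacle will be the small-characteristic bookkeeping: for $q\in\{2,3\}$ the slack afforded by a strict inequality $\max\{k,t\}<n/4$ is not large enough to absorb the lower-order terms, which is why the tighter hypotheses $\max\{k,t\}\leq(n-2)/4$ (for $q=3$) and $\max\{k,t\}\leq n/4-1$ (for $q=2$) are imposed. I would split the argument into these three subcases and verify the inequality by direct numerical estimation in each.
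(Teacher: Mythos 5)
Your proposal is correct and follows essentially the same route as the paper: the bound \eqref{eq:rbound} comes from pairing Theorem \ref{Th:Curve} with the estimate $\sum_i N_i(q^i-1)(q^i-q)\leq r(q^W-1)(q^W-q)$, non-exceptionality follows by letting the extension degree grow while the denominator stays bounded (a point the paper leaves implicit and you rightly make explicit via $W\leq\max\{k,t\}$), and the final bullet is the same direct verification that $q^n-c_1\sqrt{q^n}-c_2$ exceeds $q^{2W}-q^{W+1}-q^W+q$ in each of the three regimes. The only caveat is that your crude estimate $c_1\leq 4q^{2\max\{k,t\}}$ is not by itself sharp enough in the boundary cases, but you already flag that the subcases require direct numerical checking, which is exactly what the paper does.
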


\begin{proof}
The bound \eqref{eq:rbound} follows from Theorem \ref{Th:Curve} and
\[
 \sum_{i=1}^{n-1}N_i(q^i-1)(q^i-q)\leq r(q^W-1)(q^W-q).
\]
Therefore $f(x)$ is not exceptional $r$-fat because the bound
\[\frac{q^{n}-c_1\sqrt{q^{n}}-c_2}{q^{2W}-q^{W+1}-q^W+q}\leq r\]
is not satisfied whenever $m$ is big enough.

If $q\geq 4$ and $\max\{k,t\}<n/4$, or $q=3$ and $\max\{k,t\}\leq(n-2)/4$, or $q=2$ and $\max\{k,t\}\leq \frac{n}{4}-1$, then direct computation shows $q^n-c_1\sqrt{q^n}-c_2>0$.
Using $2\leq W\leq\max\{k,t\}$, this yields 
\[ 0<q^{2W}-q^{W+1}-q^W+q< q^n-c_1\sqrt{q^n}-c_2. \]
Together with \eqref{eq:rbound}, this implies $r>1$.
\end{proof}

\begin{theorem}\label{Th:Chebocazzi}
Let $t\geq1$ and $f(x)=\sum_{j=0}^{k}a_j x^{q^j}\in\fqn[x]$ be an $\fq$-linearized polynomial of $q$-degree $k<n$ such that $a_t=0$ and $a_0\ne0$.
Let $d=\max\{k,t\}$.
Let $N_i$ be the number of points of weight $i$ in $L_{f,t}$.

For any $i$, we have either $N_i=0$ or
\begin{equation}\label{eq:claimbbello}
N_i>\frac{q^n+1}{q^{d^2}}-\frac{2(d-1)q^{d^2}}{q^d-1}\sqrt{q^n}-1.
\end{equation}
\end{theorem}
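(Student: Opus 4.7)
The plan is to mimic the Galois-theoretic approach of \cite{FM2020}, applying the Chebotarev density theorem (Theorem \ref{th:chebotarev}) to a global function field extension canonically attached to $f(x)$. First, set $F=\fqn(m)$ with $m$ transcendental over $\fqn$, and consider $F_m(x):=f(x)-m x^{q^t}\in F[x]$. Since $a_0\ne 0$ and $t\geq 1$, one has $F_m'(x)=a_0$, so $F_m$ is separable of degree $q^d$, and its roots form a $d$-dimensional $\fq$-subspace $V$ of $\overline F$. Let $M$ be the splitting field of $F_m$ over $F$, a Galois extension of $F$ with full constant field $K$, arithmetic Galois group $G^{\rm arith}={\rm Gal}(M/F)$, and geometric Galois group $G^{\rm geom}={\rm Gal}(M/KF)$. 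Since $G^{\rm arith}$ preserves the $\fq$-linear structure of $V$, its action on $V$ embeds $G^{\rm arith}\hookrightarrow\mathrm{GL}(V)\cong\mathrm{GL}(d,\fq)$, whence $\#G^{\rm geom}\leq \#G^{\rm arith}<q^{d^2}$. Writing $m=f(\alpha)/\alpha^{q^t}$ for any nonzero root $\alpha\in V$, the intermediate field $L:=F(\alpha)$ is the rational function field $\fqn(\alpha)$ of genus $0$; consequently $[L:F]=q^d-1$ and $G^{\rm arith}$ acts transitively on $V\setminus\{0\}$, yielding $\#G^{\rm geom}\geq q^d-1$ up to a controllable constant factor.

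For each $m_0\in\fqn$, let $\mathcal P_{m_0}$ be the degree-$1$ place of $F$ defined by $m\mapsto m_0$. Since $F_{m_0}$ is separable, $\mathcal P_{m_0}$ is unramified in $M/F$. By Lemma \ref{orbits}, the orbits of the decomposition group at a place of $M$ above $\mathcal P_{m_0}$ on the roots of $F_{m_0}$ correspond to the irreducible factors of $F_{m_0}(x)$ over $\fqn$; equivalently, a Frobenius $\sigma_{m_0}\in G^{\rm arith}$ at $\mathcal P_{m_0}$ fixes exactly $q^{w(m_0)}$ vectors of $V$, where $w(m_0)$ denotes the weight in $L_{f,t}$ of the point $\langle(1,m_0)\rangle_{\fqn}$ (cf.\ Remark \ref{rk:weight}).

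Assume now $N_i\geq 1$ and pick $m_0^\ast\in\fqn$ with $w(m_0^\ast)=i$. Let $\sigma^\ast\in G^{\rm arith}$ be a Frobenius at $\mathcal P_{m_0^\ast}$; all elements of $\Gamma_{\sigma^\ast}$ fix exactly $q^i$ vectors of $V$, so any $m_1\in\fqn$ at which $\sigma^\ast$ is a Frobenius also satisfies $w(m_1)=i$. The only degree-$1$ place of $F$ not of the form $\mathcal P_{m_0}$ with $m_0\in\fqn$ is the place at infinity $\mathcal P_\infty$ of $F$, and since every $\mathcal P_{m_0}$ is unramified, $\mathcal P_\infty$ is the unique ramified place of $M/F$. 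Applying Theorem \ref{th:chebotarev} to $\sigma^\ast$ and rearranging,
\[
N_i+1\,\geq\, \frac{\#\Gamma_{\sigma^\ast}}{\#G^{\rm geom}}(q^n+1)\,-\,\frac{2\,\#\Gamma_{\sigma^\ast}}{\#G^{\rm geom}}\,g_M\sqrt{q^n}\,-\,\#\Gamma_{\sigma^\ast}\,w_{\mathcal P_\infty}(\sigma^\ast).
\]
Using $\#\Gamma_{\sigma^\ast}\geq 1$ and $\#G^{\rm geom}<q^{d^2}$, the main term is at least $(q^n+1)/q^{d^2}$, which matches the leading term of the claimed inequality.

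The main obstacle is to absorb the error and ramification contributions into $\frac{2(d-1)q^{d^2}}{q^d-1}\sqrt{q^n}+1$. Since only $\mathcal P_\infty$ is ramified in $M/F$, this reduces to a careful Riemann--Hurwitz analysis of the ramification of $M/F$ at infinity. Examining the Puiseux expansions of the roots of $F_m(x)$ as $u:=1/m\to 0$, one encounters a tame branch $x\sim (a_0 u)^{1/(q^t-1)}$ of ramification index $q^t-1$ and, when $k>t$, an additional branch $x\sim (a_k u)^{-1/(q^k-q^t)}$ whose ramification factorizes as a tame part of index $q^{k-t}-1$ combined with a wild part of index $q^t$; controlling the wild contribution to the different is the principal technical difficulty. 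Together with $\#\Gamma_{\sigma^\ast}\leq \#G^{\rm arith}$ and the transitivity lower bound $\#G^{\rm geom}\geq q^d-1$, this analysis is expected to yield the estimate $g_M\cdot\#\Gamma_{\sigma^\ast}/\#G^{\rm geom}\leq (d-1)q^{d^2}/(q^d-1)$, completing the proof.
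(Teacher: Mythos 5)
Your strategy coincides with the paper's: pass to the splitting field $M$ of $f(x)-mx^{q^t}$ over $F=\fqn(m)$, use $a_0\ne0$ and $t\geq1$ to conclude that every finite place is unramified, identify $w_{L_{f,t}}(\langle(1,m_0)\rangle_{\fqn})$ with the number of fixed points of a Frobenius at $\mathcal P_{m_0}$ via Lemma \ref{orbits}, apply Theorem \ref{th:chebotarev} to the class of a Frobenius attached to a point of weight $i$, and control $\# G^{\rm geom}$ through the embedding $G^{\rm arith}\hookrightarrow \mathrm{GL}(d,q)$ from above and through transitivity on $V\setminus\{0\}$ from below. Up to the choice of working with $f(x)-mx^{q^t}$ rather than with $f(x)/x-mx^{q^t-1}$, this is exactly the paper's argument.

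The genuine gap is the genus bound. The explicit error term $\frac{2(d-1)q^{d^2}}{q^d-1}\sqrt{q^n}$ requires $g_M<(d-1)q^{d^2}$, and you do not prove this: you propose to extract it from a Riemann--Hurwitz analysis of the (wild) ramification of $M/F$ over $\mathcal P_\infty$ via Puiseux expansions, and you yourself flag the wild contribution to the different as ``the principal technical difficulty'' which ``is expected to yield'' the estimate. As written this is a placeholder rather than a proof, and bounding the higher ramification groups of the full Galois closure at infinity is not straightforward. The paper sidesteps ramification entirely: choosing an $\fq$-basis $x_1,\dots,x_d$ of the root space $V$, one has $M=\fqn(m,x_1,\dots,x_d)=F_1\cdots F_d$ with each $F_j=\fqn(x_j)$ rational of genus $0$, and $[F_1\cdots F_{j+1}\colon F_1\cdots F_j]<q^d$ because $F_1\cdots F_j$ already contains $q^j$ roots; iterating Castelnuovo's inequality \cite[Theorem 3.11.3]{Sti} then gives $g_M<(d-1)q^{d^2}$ with no ramification data at all. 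You should replace your sketch at infinity with this (or some other completed) argument. A smaller point: transitivity of $G^{\rm arith}$ on $V\setminus\{0\}$ only bounds $\# G^{\rm arith}$ from below, whereas the denominator $q^d-1$ in the final inequality needs $\# G^{\rm geom}\geq q^d-1$, i.e.\ irreducibility of $(f(x)-mx^{q^t})/x$ over $\overline{\mathbb F}_{q^n}(m)$; this does hold because $L=\fqn(\alpha)$ is rational with full constant field $\fqn$, so $L$ is linearly disjoint from $\overline{\mathbb F}_{q^n}(m)$ over $F$, and your phrase ``up to a controllable constant factor'' should be replaced by that observation.
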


\begin{proof}
Define the polynomial $g(s,x)=f(x)/x-sx^{q^t-1}\in\fqn[s,x]$, which is irreducible over $\overline{\mathbb{F}}_{q^n}$, and consider the extension $L/F$ of global function fields over $\fqn$, where $F=\fqn(s)$ with $s$ transcendental over $\fqn$, and $L=\fqn(s,z)$ with $g(s,z)=0$.
Let $s_0\in\fqn$ be such that
\[
i:=\dim_{\fq}\ker(f(x)-s_0x^{q^t})\geq1,
\]
so that $L$ has $q^i$ places of degree $1$ lying over the place $\mathcal{P}_{s_0}$ in $F$. Then $N_i$ is at least the number $A$ of places at finite of degree $1$ of $F$ having the same factorization as $\mathcal{P}_{s_0}$ under $L$.

Let $M/\fqn$ be the Galois closure of $L/F$; clearly, $M$ is the splitting field of $g(s,x)\in F[x]$.
Note that $D(\mathcal R|\mathcal P)$ acts on  $H={\rm Hom}_F(L,M)$ as on the roots of $g(s,x)\in F[x]$, because any $\tau\in H$ is uniquely determined by the root $\tau(z)$ of $g(s,x)$. 
Since $g(s,x)$ has nonzero constant derivative, then it is squarefree for any specialization $s_0\in\overline{\mathbb F}_{q^n}$ of $s$, so the extension $L/F$ is unramified at any place at finite, and therefore $M/F$ is also unramified at any place at finite (see \cite[Corollary 3.9.3]{Sti}).
Let $g_{\mathcal{P}_{s_0}}$ be the Frobenius for $\mathcal{P}_{s_0}$: in fact, since $\mathcal{P}_{s_0}$ is unramified we have canonical isomorphisms  
$D(\mathcal R|\mathcal{P}_{s_0})\cong{\rm Gal}(\mathcal{O}_{\mathcal R}/\mathcal R\colon \mathcal{O}_{\mathcal{P}_{s_0}}/\mathcal{P}_{s_0})\cong
{\rm Gal}(\mathbb{F}_{q^{n\cdot f(\mathcal R|\mathcal P)}}\colon\fqn)$, and therefore one can choose the preimage $g_{\mathcal{P}_{s_0}}\in D(\mathcal R|\mathcal{P}_{s_0})$ of the map $(u\mapsto u^{q^n})\in {\rm Gal}(\mathbb{F}_{q^{n\cdot f(\mathcal R|\mathcal P)}}\colon\fqn)$.

By Lemma \ref{orbits}, the cycle decomposition of $g_{\mathcal{P}_{s_0}}$ on $H$ identifies the splitting of $\mathcal P_{s_0}$ under $L$; in particular, the number of cycles of length one of $g_{\mathcal P_{s_0}}$ on $H$ (i.e. the fixed points of $g_{\mathcal P_{s_0}}$) equals the number of places of degree $1$ of $L$ lying over $\mathcal{P}_{s_0}$.

Thus, the claim follows once that the right-hand side in \eqref{eq:claimbbello} is smaller than the number $U$ of places $\mathcal{P}$ at finite of degree $1$ of $F$ such that $g_{\mathcal{P}_{s_0}}$ is a Frobenius at $\mathcal{P}$, since $U\leq A\leq N_i$.

We apply Theorem \ref{th:chebotarev} and  we obtain 
 \[ U\geq \frac{|\Gamma_{\sigma}|}{|G^{\rm geom}|}(q^n+1)-\frac{2g_M\cdot|\Gamma_{\sigma}|}{|G^{\rm geom}|}\sqrt{q^n} - 1;\]
here we use that there is at most one place of degree $1$ of $F$ which ramifies under $M$, namely the place $\mathcal{P}_{\infty}$ at infinity, and $|\Gamma_{\sigma}|\cdot w_{\mathcal{P}_{\infty}}(\sigma)\leq1$.

In order to give an upper bound for $g_M$, let $\{x_1,\ldots,x_d\}$ be an $\fq$-basis of $V$, so that $M=\fqn(s,x_1,\ldots,x_d)$ is the compositum of the $d$ rational function fields
\[
F_1=\fqn(s,x_1)=\fqn(x_1),\ldots,F_d=\fqn(s,x_d)=\fqn(x_d).
\]
For every $j=1,\ldots,d-1$, the compositum $F_1\cdots F_j=\fqn(x_1,\ldots,x_j)$ contains at least $q^j$ roots of $f_s(x)$, so that \[[F_1\cdots F_{j+1}\colon F_1\cdots F_{j}]\leq (q^d-q^j)<q^d,\]
\[
[F_1\cdots F_{j+1}\colon F_{j+1}]=
\prod_{i=1}^j [F_{j+1} F_1\cdots F_i\colon F_{j+1}F_1\cdots F_{i-1}]\leq \prod_{i=1}^j (q^d-q^i)<q^{jd}.
\]
Thus, the inductive application of Castelnuovo's Inequality (see \cite[Theorem 3.11.3]{Sti}) provides the upper bound
\[ g_{F_1\cdots F_{j+1}}<j q^{(j+1)d} \]
for the genus of $F_1\cdots F_{j+1}$. In particular, $g_M< (d-1)q^{d^2}$.
Together with $|\Gamma_{\sigma}|\geq1$, this yields
\begin{equation}\label{eq:U}
U> \frac{q^n+1}{|G^{\textrm{geom}}|}-\frac{2(d-1)q^{d^2}}{|G^{\textrm{geom}}|}\sqrt{q^n}-1.
\end{equation}
In order to give an upper bound on $|G^{\rm geom}|$, note that $G^{\rm arith}={\rm Gal}(M/F)$ embeds into ${\rm GL}(d,q)$, because the roots of $x\cdot g(s,x)$ form a $d$-dimensional $\fq$-vector space $V$ and $G^{\rm arith}$ has an $\fq$-linear faithful action on $V$. Therefore
\[|G^{\rm geom}|\leq |G^{\rm arith}|\leq |{\rm GL}(d,q)|=\prod_{j=1}^{d-1}(q^d-q^j)<q^{d^2}.\]
By plugging this bound in \eqref{eq:U} and recalling $N_i\geq U$, the claim follows.
\end{proof}

\begin{corollary}\label{rchebo}
Let $t\geq1$ and $f(x)=\sum_{j=0}^{k}a_j x^{q^j}\in\fqn[x]$ be an $\fq$-linearized polynomial of $q$-degree $k<n$ such that $a_t=0$ and $a_0\ne0$.
Let $d=\max\{k,t\}$.

If $f(x)$ is $r$-fat of index $t$ with $r>0$, then
\[
r>\frac{q^n+1}{q^{d^2}}-\frac{2(d-1)q^{d^2}}{q^d-1}\sqrt{q^n}-1.
\]
In particular, $f(x)$ is not exceptional $r$-fat of index $t$.
\end{corollary}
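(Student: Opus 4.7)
The plan is to obtain the corollary as an essentially direct consequence of Theorem \ref{Th:Chebocazzi}, combined with a simple summation bound relating $r$ to the weight distribution $\{N_i\}$. The hypotheses on $f(x)$ (monic assumption is not needed here, only $a_t=0$ and $a_0\ne 0$) are exactly those required for Theorem \ref{Th:Chebocazzi}, so no preliminary normalization step is needed. First I would unfold the definition of $r$-fat: since $r>0$, there exists some $m\in\fqn$ and some integer $i_0\ge 2$ with $\dim_{\fq}\ker(f(x)-mx^{q^t})=i_0$; equivalently $L_{f,t}$ admits a point of weight $i_0\ge 2$, so $N_{i_0}\ge 1$ and in particular $N_{i_0}\neq 0$.

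Next I would feed this $i_0$ into Theorem \ref{Th:Chebocazzi}. The dichotomy ``$N_i=0$ or $N_i>\frac{q^n+1}{q^{d^2}}-\frac{2(d-1)q^{d^2}}{q^d-1}\sqrt{q^n}-1$'' then forces the strict inequality for $i=i_0$. Since by definition $r=\sum_{i\ge 2}N_i\ge N_{i_0}$, the same lower bound is inherited by $r$, which is exactly the first assertion of the corollary.

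For the second assertion I would argue by contradiction on $m$. Replacing $\fqn$ by $\fqn^{m}$ does not alter the $q$-degree $k$, the index $t$, or the coefficients of $f(x)$, and in particular leaves $d=\max\{k,t\}$ unchanged. The lower bound from the previous step therefore reads
\[
r>\frac{q^{nm}+1}{q^{d^2}}-\frac{2(d-1)q^{d^2}}{q^{d}-1}\sqrt{q^{nm}}-1
\]
over every such extension. Once $nm/2>d^{2}$, the leading term $q^{nm}/q^{d^{2}}$ dominates the middle term $q^{d^{2}}\sqrt{q^{nm}}$, so the right-hand side tends to $+\infty$ as $m\to\infty$. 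A fixed positive integer $r$ cannot satisfy this inequality for infinitely many $m$, contradicting exceptionality.

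The corollary is essentially bookkeeping on top of Theorem \ref{Th:Chebocazzi}, so there is no serious obstacle; the only point that deserves a line of care is the observation that $d$ is an intrinsic datum of the polynomial (not of the ambient field) and thus stable under the extensions $\fqn\subset\fqn^{m}$, which is what makes the asymptotic comparison for the exceptional case work cleanly.
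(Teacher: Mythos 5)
Your proposal is correct and follows exactly the route the paper intends: the paper states Corollary \ref{rchebo} without proof as an immediate consequence of Theorem \ref{Th:Chebocazzi}, and your argument (picking $i_0\ge 2$ with $N_{i_0}\ge 1$, invoking the dichotomy to get $N_{i_0}$ above the bound, bounding $r\ge N_{i_0}$, and letting the bound diverge over extensions $\mathbb{F}_{q^{nm}}$ while $d$ stays fixed) is precisely the intended bookkeeping. Your observations that monicity is not needed and that $d$ is intrinsic to the polynomial are both accurate.
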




\begin{remark}\label{rem:eppoi}
Suppose that $t=0$, and assume without restrictions that $f(x)=\sum_{j=0}^{k}a_j x^{q^j}\in\fqn[x]$ is not a monomial, with $k<n$ and $a_0=0$.
If $s_0\in\fqn$, then the polynomial $f(x)-s_0x^{q^t}=f(x)-s_0x\in\fqn[x]$ is separable if and only if $s_0\ne0$.
Therefore the argument in the proof of Theorem \ref{Th:Chebocazzi} still yields Condition \eqref{eq:claimbbello} for $N_i$ whenever $L_{f,0}$ has a point of weight $i$ different from $\langle(1,0)\rangle_{\fqn}$.
Neverthless, in the case $t=0$, a better bound is provided by Theorem \ref{Th:Curve}.
\end{remark}


\begin{theorem}\label{th:solo1}
There exist no exceptional $r$-fat polynomials whenever $r>0$.
\end{theorem}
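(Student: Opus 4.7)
The plan is to package Corollary \ref{cor:rcurve} and Corollary \ref{rchebo}, which already do the heavy lifting, after the preliminary reductions of Remarks \ref{rem:assumptions1} and \ref{rem:assumptions2}. Suppose for contradiction that $f(x)\in\fqn[x]$ is an exceptional $r$-fat polynomial of index $t$ with $r>0$. First I would use Remark \ref{rem:assumptions1} to exclude the monomial case, since if $f(x)$ is a monomial then $L_{f,t}$ is a scattered linear set of pseudoregulus type and hence $r=0$. Next I would apply Remark \ref{rem:assumptions2} to replace the pair $(f,t)$ by a pair $(f',t')$ in which $f'(x)$ is $t'$-normalized and $L_{f',t'}$ has the same weight distribution as $L_{f,t}$. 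Each normalization step (multiplying by a scalar in $\fqn^*$, subtracting a multiple of $x^{q^t}$, substituting $x\mapsto x^{q^{t'}}$) is carried out over $\fqn$, so it commutes with base change to every extension $\mathbb{F}_{q^{nm}}$ and preserves the property of being exceptional $r$-fat.

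I would then split into two cases. If $t'=0$, then $f'(x)$ is monic with $a_0=0$, not a monomial, and its maximum weight $W$ is at least $2$ because $r>0$; these are exactly the hypotheses of the $t=0$ branch of Theorem \ref{Th:Curve}, so Corollary \ref{cor:rcurve} applies and states that $f'(x)$ is not exceptional $r$-fat of index $0$. If $t'\geq 1$, then $f'(x)$ satisfies $a_{t'}=0$ and $a_0\ne 0$, which are exactly the hypotheses of Corollary \ref{rchebo}, and that corollary states that $f'(x)$ is not exceptional $r$-fat of index $t'$. In either case we contradict the assumption that the normalized $f'(x)$ is exceptional $r$-fat.

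There is no real obstacle beyond this case split: the key analytic inputs, namely Hasse--Weil applied to the $\fqn$-rational absolutely irreducible component of $\mathcal{C}$ and the Chebotarev density estimate combined with Castelnuovo's genus bound, have already been encoded into the two corollaries. Both of the resulting bounds on $r$ grow with $n$ while the relevant invariants ($q$, $k$, $t$, and hence $d=\max\{k,t\}$ and the maximum weight $W\leq d$) remain fixed when passing from $\fqn$ to $\mathbb{F}_{q^{nm}}$, so a fixed positive $r$ cannot satisfy them for all large $m$. The only bookkeeping to double-check is the compatibility of the normalization of Remark \ref{rem:assumptions2} with taking infinitely many extensions, which is transparent from the explicit description of the normalization operations.
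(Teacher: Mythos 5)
Your proposal is correct and follows essentially the same route as the paper: reduce to a $t$-normalized non-monomial polynomial via Remarks \ref{rem:assumptions1} and \ref{rem:assumptions2}, then invoke Corollary \ref{cor:rcurve} when $t=0$ and Corollary \ref{rchebo} when $t>0$. Your additional remarks on why normalization is compatible with passing to extensions and why the bounds grow with $n$ are accurate but are left implicit in the paper.
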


\begin{proof}
If an exceptional $r$-fat polynomial with $r>0$ exists, then by Remarks \ref{rem:assumptions1} and \ref{rem:assumptions2} there exists a $t$-normalized exceptional $r$-fat polynomial $f(x)\in\fqn[x]$ for some index $t$, which is not a monomial.
This is a contradiction to Corollary \ref{cor:rcurve} in the case $t=0$, and to Corollary \ref{rchebo} in the case $t>0$.  
\end{proof}


\section{Known examples of \texorpdfstring{$1$}{1}-fat polynomials}\label{sec:known}

Let $f(x)$ be an $\fq$-linearized polynomial over $\fqn$. Recall that $f(x)$ is $1$-fat of index $t$ and maximum weight $i$ if and only if $L_{f,t}$ is an $i$-club of rank $n$ in $\PG(1,q^n)$.

Let $q$ be any prime power, $n,\ell,m,s,i \in \mathbb{N}$ be such that $n=\ell m$, $i=m(\ell-1)$, $\gcd(s,n)=1$.
Consider over $\fqn$ the automorphism $\sigma:x\mapsto x^{q^s}$ and the trace function $\mathrm{Tr}_{q^{\ell m}/q^m}:x\mapsto \sum_{j=0}^{\ell-1}x^{q^{jm}}$.
Define $T(x)=\mathrm{Tr}_{q^{\ell m}/q^m}\circ \sigma(x)\in\fqn[x]$.
In \cite[Theorem 3.3]{DeBoeckVdV}, the authors proved that $T(x)$ is a $1$-fat polynomial of index zero and maximum weight $i$ over $\fqn$.

It is known that every $1$-fat polynomial of maximum weight $n-1$ over $\fqn$ is equivalent to the trace function $\mathrm{Tr}_{q^n/q}$ (see \cite[Theorem 2.3]{DeBoeckVdV}).
Clearly, $L_{\mathrm{Tr}_{q^n/q},t}=L_{\mathrm{Tr}_{q^n/q},0}$ for any index $t$.
Hence, $\mathrm{Tr}_{q^n/q}(x)$ is a $1$-fat polynomial of maximum weight $n-1$ over $\fqn$ for any index, and any $1$-fat polynomial of maximum weight $n-1$ over $\fqn$ is equivalent to $\mathrm{Tr}_{q^n/q}(x)$ of index $0$.
As a consequence, we have a complete classification of $1$-fat polynomials over $\fqn$ for the case $n=3$.

We resume the aforementioned results as follows.
\begin{theorem}\label{th:resume}
\begin{itemize}
    \item Any $1$-fat polynomial over $\fqn$ with maximum weight $n-1$ is equivalent to the trace function $\mathrm{Tr}_{q^n/q}(x)$.
    \item Let $n=\ell m$, $i=m(\ell-1)$, $\gcd(s,n)=1$ and $\sigma:x\mapsto x^{q^s}$. The polynomial $T(x)=\mathrm{Tr}_{q^{\ell m}/q^m}\circ \sigma(x)$ is a $1$-fat polynomial of index $mj$ over $\F_{q^{\ell m}}$ with maximum weight $i$, for every $j\in\{0,\ldots,\ell-1\}$.
    \item Any $1$-fat polynomial over $\F_{q^3}$ is equivalent to the trace function $\mathrm{Tr}_{q^3/q}(x)$.
\end{itemize}
\end{theorem}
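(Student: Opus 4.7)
The statement compiles three results about $1$-fat polynomials, so the proof plan is mainly referential, with one direct computation needed for the index-independence in item~(2).

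For item~(1), the plan is to rephrase \cite[Theorem 2.3]{DeBoeckVdV} in the language of $1$-fat polynomials via Remark~\ref{rk:weight}: a $1$-fat polynomial $f$ of index $t$ and maximum weight $n-1$ over $\fqn$ corresponds to an $(n-1)$-club of rank $n$ in $\mathrm{PG}(1,q^n)$ through $L_{f,t}$, and the cited theorem classifies such clubs as $\mathrm{P\Gamma L}$-equivalent to $L_{\mathrm{Tr}_{q^n/q}}$. The translation is immediate, and noting that $L_{\mathrm{Tr}_{q^n/q},t}=L_{\mathrm{Tr}_{q^n/q},0}$ for every $t$ (the trace image and its Frobenius conjugates coincide) handles the freedom in the choice of index.

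For item~(2), the case $j=0$ is exactly \cite[Theorem 3.3]{DeBoeckVdV}. For $j\in\{1,\ldots,\ell-1\}$ I would directly analyze the $\fq$-dimension of $\ker(T(x)-\mu x^{q^{mj}})$ for each $\mu\in\fqn$. First, bijectivity of $\sigma$ yields $\dim_{\fq}\ker T=\dim_{\fq}\ker\mathrm{Tr}_{q^{\ell m}/q^m}=m(\ell-1)=i$, so the point $\langle(1,0)\rangle_{\fqn}$ has weight $i$. For $\mu\ne0$, the substitution $y=x^{q^s}$, a bijection on $\fqn$ since $\gcd(s,n)=1$, rewrites the equation as $\mathrm{Tr}_{q^{\ell m}/q^m}(y)=\mu y^{q^{mj-s}}$. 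Since the left-hand side lies in $\F_{q^m}$, any solution $y$ must satisfy $y\in\mu^{-q^{s-mj}}\F_{q^m}$, an $\fq$-subspace of $\fqn$ of dimension $m$. Writing $y=\mu^{-q^{s-mj}}w$ with $w\in\F_{q^m}$, the equation collapses to $\tau w=w^{q^{r'}}$ in $\F_{q^m}$, where $\tau=\mathrm{Tr}_{q^{\ell m}/q^m}(\mu^{-q^{s-mj}})$ and $r'=(-s)\bmod m$ is coprime to $m$ because $\gcd(s,m)=1$. Using $\gcd(q^{r'}-1,q^m-1)=q-1$, this equation either has only the trivial solution or exactly $q$ solutions forming an $\fq$-line, so every point with $\mu\ne0$ has weight at most $1$ in $L_{T,mj}$. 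Hence $\langle(1,0)\rangle_{\fqn}$ is the unique point of weight greater than $1$, and its weight is $i$.

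Item~(3) is then immediate from items~(1) and~(2): the maximum weight $i$ of any $1$-fat polynomial satisfies $2\le i\le n-1$, which for $n=3$ forces $i=n-1$, and item~(1) produces the desired classification. The only delicate step in the whole proof is the dimension reduction in item~(2), namely verifying that the constraint $\mu y^{q^{mj-s}}\in\F_{q^m}$ really carves out an $\fq$-subspace of dimension $m$ and that the resulting equation in $\F_{q^m}$ admits the clean subgroup-index count; once this is in place, the three weight regimes (one point of weight $i$, several points of weight $1$, and the remainder of weight $0$) follow automatically.
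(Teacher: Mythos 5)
Your proposal is correct, and for the first and third items it coincides with the paper's treatment: Theorem \ref{th:resume} is presented there as a summary, with item (1) being exactly \cite[Theorem 2.3]{DeBoeckVdV} read through Remark \ref{rk:weight}, and item (3) following because a $1$-fat polynomial over $\F_{q^3}$ has its unique heavy point of weight in $\{2\}=\{n-1\}$. The one place where you genuinely diverge is the index-$mj$ claim in item (2). The paper only cites \cite[Theorem 3.3]{DeBoeckVdV} for the index-$0$ case and leaves the extension implicit; the intended argument is the identity $T(x^{q^{m(\ell-j)}})=\mathrm{Tr}_{q^{\ell m}/q^m}\bigl((x^{q^s})^{q^{m(\ell-j)}}\bigr)=T(x)$, which gives $L_{T,mj}=L_{T,0}$ outright (the analogue of the displayed remark ``$L_{\mathrm{Tr}_{q^n/q},t}=L_{\mathrm{Tr}_{q^n/q},0}$'' restricted to indices that are multiples of $m$), so the weight distribution for index $mj$ is inherited with no computation. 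Your route instead computes $\dim_{\fq}\ker(T(x)-\mu x^{q^{mj}})$ directly: the reduction to $y\in\mu^{-q^{s-mj}}\F_{q^m}$ and then to $\tau w=w^{q^{r'}}$ with $\gcd(r',m)=1$ is sound (the solution set is the kernel of an $\fq$-linear map on $\F_{q^m}$, hence of dimension $0$ or $1$ by $\gcd(q^{r'}-1,q^m-1)=q-1$), and it has the advantage of re-deriving the $j=0$ case as well, making item (2) self-contained rather than a citation. The trade-off is length: the invariance argument is one line, while your computation needs the careful bookkeeping of exponents modulo $m$ that you correctly identify as the delicate step. Either way the conclusion stands; the only caveat, shared with the paper, is the tacit assumption $i=m(\ell-1)\geq 2$ so that $\langle(1,0)\rangle_{\fqn}$ really is a point of weight larger than one.
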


Further examples of clubs of rank $n$ in $\PG(1,q^n)$ are not described in terms of linearized polynomials; it is not known for which linearized polynomial $f(x)$ and index $t$ they are equal to $L_{f,t}$.
For instance, in \cite[Lemma 2.12]{DeBoeckVdV} it was proved that for any $\lambda \in \fqn^*$ such that $\{1,\lambda,\ldots,\lambda^{n-1}\}$ is an $\fq$-basis of $\fqn$, the $\fq$-linear set
\begin{equation}\label{eq:exKMnopol} L_{\lambda}=\{ \langle (t_1\lambda+\ldots+t_{n-1}\lambda^{n-1},t_{n-1}+t_n \lambda) \rangle_{\fqn} \colon (t_1,\ldots,t_n)\in \fq^n\setminus\{\mathbf{0}\} \},
\end{equation}
is an $(n-2)$-club of rank $n$ in $\PG(1,q^n)$.
Other examples have been described in \cite[Theorems 3.6 and 3.12, Corollary 4.15]{DeBoeckVdV}.

Clubs of rank $n$ in $\PG(1,q^n)$, and hence $1$-fat polynomials over $\fqn$, are connected with interesting nonlinear combinatorial objects from finite geometry, namely with so-called KM-arcs, as shown by De Boeck and Van de Voorde in \cite[Theorems 2.1 and 2.2]{DeBoeckVdV}. 
A \emph{KM-arc $\mathcal{A}$ of type} $\tau$ in $\PG(2,q)$ is a set of $q+\tau$ points of type $(0,2,\tau)$, i.e. each line of $\PG(2,q)$ meets $\mathcal{A}$ in either 0, 2 or $\tau$ points.
Korchm\'aros and Mazzocca in \cite{KM} proved that, if a KM-arc of type $\tau$  in $\PG(2,q)$ exists and $2<\tau<q$, then $q$ is even and $\tau$ is a divisor of $q$.
A KM-arc $\mathcal{A}$ in $\PG(2,q)$ is called a \emph{translation} KM-arc if there exists a line $\ell$ of $\PG(2,q)$ such that the group of elations with axis $\ell$ and fixing $\mathcal{A}$ acts transitively on the points of $\mathcal{A}\setminus \ell$. 
The first example of a KM-arc of type $2^i$ was presented in \cite{KM} and can be described as $L_{T}$, where $T(x)=\mathrm{Tr}_{q^{\ell m}/q^m}\circ\sigma (x)$ is described in Theorem \ref{th:resume} with $q=2$; see \cite[Theorem 3.2]{DeBoeckVdV}.

\section{\texorpdfstring{$r$}{Lg}-fat polynomials over \texorpdfstring{$\F_{q^4}$}{Lg} and linear sets in \texorpdfstring{$\PG(1,q^4)$}{Lg}}\label{sec:q4}

In this section we use the results of \cite{BoPol} and \cite{LavVdV2010} to give a complete classification of $\fq$-linear sets of rank $4$ in $\PG(1,q^4)$. 

First we recall some definition on blocking sets.
A \emph{blocking set} $\mathcal{B}$ of $\PG(2,q^n)=\PG(\overline{V},\fqn)$ is a point set intersecting every line of $\PG(2,q^n)$.
Furthermore, if $\mathcal{B}$ has size $q^n+N\leq 2q^n$ and there exists a line $\ell$ meeting $\mathcal{B}$ in exactly $N$ points, then $\mathcal{B}$ is called of \emph{R\'edei type} and $\ell$ is said to be a \emph{R\'edei line}.

A way to construct blocking sets of R\'edei type is the following.
Let $L_U$ be an $\fq$-linear set of rank $n$ of a line $\ell=\PG(V,\fqn)$, where $V$ is a $2$-dimensional $\fqn$-subspace of $\overline{V}$, and let $\mathbf{v}\in \overline{V}\setminus V$.
Then the linear set $L_{\langle U, \mathbf{v}\rangle_{\fq}}$ of $\PG(2,q^n)$ is an $\fq$-linear blocking set of R\'edei type with R\'edei line $\ell$.

\begin{remark}
Note that both the $\fq$-linear sets of rank $n+1$ (that is the $\fq$-linear blocking sets)
in $\PG(2,q^n)$ and the $\fq$-linear sets of rank $4$ in $\PG(1,q^4)$ are simple, as proved in \cite[Proposition 2.3]{BoPol} and \cite[Section 4]{CsMP}.
\end{remark}

We now recall the classification of $\fq$-linear blocking sets in $\PG(2,q^4)$ of R\'edei type.

\begin{theorem}{\rm (\cite[Section 5]{BoPol})}\label{cor:BoPol}
If $L_U$ is an $\fq$-linear blocking set of $\PG(2,q^4)$ of R\'edei type, then $L_U$ is $\mathrm{P}\Gamma\mathrm{L}$-equivalent to one of the following:
\begin{itemize}
\item [(A)] Baer subplane, which has order $q^4+q^2+1$;
\item [($B_1$)] $L_{B,1}=\{\la(\alpha,x,\mathrm{Tr}_{q^4/q}(x))\ra_{\mathbb{F}_{q^4}} \colon x \in \mathbb{F}_{q^4}, \alpha\in\fq,(\alpha,x)\ne(0,0)\}$, which has order $q^4+q^3+1$;
\item [($B_{21}$)] $L_{B,21}=\{\la(\alpha,x,x^q-x^{q^3})\ra_{\mathbb{F}_{q^4}} \colon x \in \mathbb{F}_{q^4}, \alpha\in\fq,(\alpha,x)\ne(0,0)\}$, which has order $q^4+q^3+q^2+1$;
\item [($B_{22}$)] $L_{B,22}=\{\la(-\xi x_0+x_1,-\eta x_2+x_3,x_4)\ra_{\mathbb{F}_{q^4}} \colon (x_0,\ldots,x_4)\in\fq^5\setminus\{\mathbf{0}\}\}$ for all $\eta \in \mathbb{F}_{q^4}\setminus\mathbb{F}_{q^2}$ and some $\xi \in \mathbb{F}_{q^2}\setminus\fq$, which has order $q^4+q^3+q^2-q+1$;
\item [($C_{11}$)] $L_{C,11}=\{\la(\alpha,x,x^q)\ra_{\mathbb{F}_{q^4}} \colon x \in \mathbb{F}_{q^4}, \alpha\in\fq,(\alpha,x)\ne(0,0)\}$, which has order $q^4+q^3+q^2+q+1$;
\item [($C_{12}$)] $L_{C,12}=\{\la(\eta x_0-\eta^2 x_1+x_2,-\eta x_1 +x_3,x_4)\ra_{\mathbb{F}_{q^4}} \colon (x_0,\ldots,x_4)\in\fq^5\setminus\{\mathbf{0}\} \}$ for all $\eta \in \mathbb{F}_{q^4}\setminus \mathbb{F}_{q^2}$, which has order $q^4+q^3+1$;
\item [($C_{13}$)] $L_{C,13}=\{\la(x_0+\eta_1x_2+x_3,x_1+\eta_2^{-1}x_3,x_4)\ra_{\mathbb{F}_{q^4}} \colon (x_0,\ldots,x_4)\in\fq^5\setminus\{\mathbf{0}\} \}$ for all $\eta_1,\eta_2\in \mathbb{F}_{q^4}\setminus\mathbb{F}_{q^2}$ such that $1,\eta_1,\eta_2,-\eta_1\eta_2$ are $\fq$-linearly independent, which has order $q^4+q^3+q^2-q+1$;
\item [($C_{14}$)] $L_{C,14}=\{\la(x_0-(d\eta_1+\eta_2)x_2+\eta_1 x_3,x_1+c\eta_1 x_2+\eta_2 x_3,x_4)\ra_{\mathbb{F}_{q^4}} \colon (x_0,\ldots,x_4)\in\fq^5\setminus\{\mathbf{0}\} \}$ for some $c,d \in \mathbb{F}_q$ such that $f(x,y)=y^2-cx^2-dxy$ is irreducible over $\fq$ and some $\eta_1,\eta_2\in \mathbb{F}_{q^4}\setminus\mathbb{F}_{q^2}$ such that $1,\eta_1,\eta_2,f(\eta_1,\eta_2)$ are $\fq$-linearly independent and $f(\eta_1^{q^i}-\eta_1,\eta_2^{q^i}-\eta_2)\ne 0$ for each $i \in \{1,2\}$, which has order $q^4+q^3+q^2+q+1$;
\item [($C_{15}$)] $L_{C,15}=\{\la(x_1-\eta_1x_3,-\eta_1 x_0+x_2-\eta_2 x_3,x_4)\ra_{\mathbb{F}_{q^4}} \colon (x_0,\ldots,x_4)\in\fq^5\setminus\{\mathbf{0}\} \}$ for all $\eta_1 \in \mathbb{F}_{q^4}\setminus \mathbb{F}_{q^2}$ and all $\eta_2 \in \mathbb{F}_{q^4}$ such that $1,\eta_1,\eta_2,-\eta_1^2$ are $\fq$-linearly independent, which has order $q^4+q^3+q^2+1$.
\end{itemize}
\end{theorem}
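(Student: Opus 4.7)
The plan is to prove this via the well-known correspondence between $\fq$-linear blocking sets of R\'edei type in $\PG(2,q^n)$ and $\fq$-linear sets of rank $n$ on the R\'edei line. Specifically, if $L_U$ is such a blocking set with R\'edei line $\ell=\PG(V,\fqn)$, then $U$ can be written (after an $\mathrm{P}\Gamma\mathrm{L}$-normalization) as $\langle U_0,\mathbf{v}\rangle_{\fq}$ for some 4-dimensional $\fq$-subspace $U_0\subset V$ and some $\mathbf{v}\notin V$, and the restriction $L_U\cap \ell = L_{U_0}$ is an $\fq$-linear set of rank $4$ in $\PG(1,q^4)$. Two R\'edei-type blocking sets are $\mathrm{P}\Gamma\mathrm{L}$-equivalent if and only if their restrictions on the R\'edei lines are equivalent (using simplicity, cited from \cite{BoPol} and \cite{CsMP}). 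Hence the classification of R\'edei-type blocking sets in $\PG(2,q^4)$ reduces to the classification of $\fq$-linear sets of rank $4$ in $\PG(1,q^4)$, to which I turn next.

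Up to $\mathrm{P}\Gamma\mathrm{L}(2,q^4)$-equivalence I would assume $\langle(0,1)\rangle_{\fqn}\notin L_{U_0}$, so $U_0=\{(x,f(x))\colon x\in W\}$ is the graph of an $\fq$-linear map $f\colon W\to\mathbb{F}_{q^4}$ with $W$ an $\fq$-subspace of $\mathbb{F}_{q^4}$ of dimension $\dim_{\fq}W\in\{4,3,2,1\}$. The Baer subplane case (A) corresponds to $U_0$ being a 4-dimensional $\mathbb{F}_{q^2}$-subspace, i.e. $W=\mathbb{F}_{q^2}$-stable and $f$ semilinear over $\mathbb{F}_{q^2}$. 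All remaining cases would be split according to the weight distribution of $L_{U_0}$ (the multiset $(N_1,N_2,N_3,N_4)$ defined in Section \ref{sec:pre}), which determines $|L_{U_0}|$, and these orders match exactly the orders appearing in the statement: $q^4+q^3+1$, $q^4+q^3+q^2+1$, $q^4+q^3+q^2-q+1$, $q^4+q^3+q^2+q+1$.

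Third, within each weight distribution I would put $f$ into canonical form by exploiting the $\mathrm{\Gamma L}(2,q^4)$-action. Key reductions are: if $L_{U_0}$ has a point of weight $3$, pre-composing with a translation moves that point to a chosen location, and the residual rank-$1$ configuration on the complement gives canonical coefficients (leading to families $B_*$). If $L_{U_0}$ has maximum weight $2$, one picks a basis of the weight-$2$ fiber and normalizes its image (leading to families $C_*$); the $\mathbb{F}_{q^2}$ or $\mathbb{F}_{q^4}$ structure of the fiber and the auxiliary coefficients $\eta_1,\eta_2,c,d$ parametrize the remaining $C_{1j}$ subcases. In each case, explicitly verifying the size of $L_{U_0}$ and adjoining the point $\langle\mathbf{v}\rangle_{\fqn}$ recovers the listed blocking sets.

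The main obstacle is the completeness and non-equivalence of the case list. To certify completeness, I would invoke the R\'edei-polynomial machinery of \cite{BoPol}: the R\'edei polynomial $R(X,Y)=\prod_{u\in U_0}(X+u_1Y-u_2)$ factors over $\mathbb{F}_{q^4}[X,Y]$ in a way fully controlled by the weight distribution, and enumerating the admissible factorization patterns yields exactly the cases above. To certify non-equivalence, I would use invariants like the order $|L_{U_0}|$, the maximum weight, and finer data such as the isomorphism type of the stabilizer in $\mathrm{P}\Gamma\mathrm{L}$ and the $\fq$-linear span of the directions determined by the R\'edei factorization; these invariants separate same-order pairs like $(B_1,C_{12})$ and $(B_{22},C_{13})$.
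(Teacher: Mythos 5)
The first thing to note is that the paper does not prove this statement at all: it is quoted from \cite[Section 5]{BoPol} as an external classification result, so there is no internal proof to compare yours against. Judged on its own terms, your outline identifies a sensible framework (the dictionary between R\'edei-type blocking sets in $\PG(2,q^4)$ and rank-$4$ linear sets on the R\'edei line, stratification by weight distribution, normalization under $\GammaL(2,q^4)$), but as written it is not a proof. One concrete problem is your reduction ``two R\'edei-type blocking sets are $\mathrm{P}\Gamma\mathrm{L}$-equivalent if and only if their restrictions on the R\'edei lines are equivalent'': this ignores that a blocking set may admit several R\'edei lines. By \cite[Theorem 5]{LunPol2000} the family $B_1$ does have more than one, and the paper's own Corollary \ref{cor:PG1q4} must treat that case separately before inverting the correspondence. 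Without an argument that all R\'edei lines of a given blocking set yield equivalent restrictions, your dictionary is not well defined.

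The more serious gap is that the entire content of the theorem lies in the explicit and complete list of nine canonical forms, the precise constraints on the parameters ($\xi$, $\eta$, $\eta_1$, $\eta_2$, $c$, $d$, the $\fq$-linear independence conditions, the nonvanishing of $f(\eta_1^{q^i}-\eta_1,\eta_2^{q^i}-\eta_2)$), and the pairwise inequivalence of families sharing the same order (such as $B_1$ and $C_{12}$, $B_{22}$ and $C_{13}$, $B_{21}$ and $C_{15}$, $C_{11}$ and $C_{14}$). Your sketch delegates exactly these steps to ``the R\'edei-polynomial machinery of \cite{BoPol}'' for completeness and to unspecified ``finer invariants'' for inequivalence; that is, the decisive part of the argument is re-imported from the very source being cited rather than supplied. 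Note also that the order and maximum weight alone cannot separate the same-order pairs above, so the invariants you do name explicitly are insufficient. As a plan for reconstructing Bonoli--Polverino's classification the outline is reasonable, but it does not constitute a proof of the statement.
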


We also recall the possible weight distributions of $\fq$-linear sets of rank $4$ in $\PG(1,q^n)$.

\begin{proposition}{\rm (\cite[Result 1.13]{DeBoeckVdV2020}, see also \cite[Lemma 10]{LavVdV2010})}\label{prop:rank4distribution}
Let $L_U$ be an $\fq$-linear set of rank $4$ in $\PG(1,q^n)$ with $n\geq4$. Then one of the following holds:
\begin{itemize}
    \item $|L_U|=1$, $L_U$ is a unique point of weight $4$;
    \item $|L_U|=q^2+1$, all points of $L_U$ have weight $2$ and $L_U$ is a Baer subline of $\PG(1,q^n)$;
    \item $|L_U|=q^3+1$, $L_U$ has $1$ point of weight $3$ and $q^3$ points of weight $1$;
    \item $|L_U|=q^3+1$, $L_U$ has $q+1$ points of weight $2$ and $q^3-q$ points of weight $1$;
    \item $|L_U|=q^3+q^2-q+1$, $L_U$ has $2$ points of weight $2$ and $q^3+q^2-q-1$ points of weight $1$;
    \item $|L_U|=q^3+q^2+1$, $L_U$ has $1$ point of weight $2$ and $q^3+q^2$ points of weight $1$;
    \item $|L_U|=q^3+q^2+q+1$, all points of $L_U$ have weight $1$.
\end{itemize}
\end{proposition}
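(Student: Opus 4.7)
My plan is to combine the weight-sum identity
\begin{equation*}
\sum_{i=1}^{4} N_i\cdot \frac{q^i-1}{q-1}=\frac{q^4-1}{q-1}=q^3+q^2+q+1
\end{equation*}
with the following \emph{disjointness bound}: for distinct points $P,Q\in L_U$ the defining subspaces $U_P:=U\cap \langle \mathbf v_P\rangle_{\fqn}$ and $U_Q:=U\cap \langle \mathbf v_Q\rangle_{\fqn}$ intersect trivially (since $\langle \mathbf v_P\rangle_{\fqn}\cap \langle \mathbf v_Q\rangle_{\fqn}=\{\mathbf 0\}$), while $U_P+U_Q\subseteq U$ has dimension at most $\dim_{\fq}U=4$; consequently $w_{L_U}(P)+w_{L_U}(Q)\leq 4$ for any two distinct points of $L_U$. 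These two ingredients drive the whole classification.

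The high-weight cases fall out immediately. If $N_4\geq 1$, then $U_P=U$ for such a point $P$, so $L_U=\{P\}$, which is the first case. If $N_3\geq 2$, two weight-$3$ points would force $\dim_{\fq}(U_P+U_Q)=6>4$, a contradiction; and if $N_3=1$, the inequality $3+2>4$ rules out any additional point of weight $\geq 2$, so $N_2=0$ and the identity forces $N_1=q^3$, which is the third case.

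It remains to treat $N_3=N_4=0$. The case $N_2=0$ is the scattered one (last case). For $N_2\geq 2$, after a projectivity in $\mathrm{PGL}(2,q^n)$ I will assume that two of the weight-$2$ points are $P_1=\langle \mathbf e_1\rangle_{\fqn}$ and $P_2=\langle \mathbf e_2\rangle_{\fqn}$, so that $U=U_{P_1}\oplus U_{P_2}= S_1\mathbf e_1\oplus S_2\mathbf e_2$ for suitable $2$-dimensional $\fq$-subspaces $S_1,S_2\subseteq \fqn$. Every additional weight-$2$ point has the form $\langle \mathbf e_1+\mu\mathbf e_2\rangle_{\fqn}$ with $\mu\in\fqn^*$, and the condition that its defining subspace $T(\mathbf e_1+\mu\mathbf e_2)$ lies in $U$ forces $T\subseteq S_1$ and $\mu T\subseteq S_2$; by dimension, $T=S_1$ and $\mu S_1=S_2$. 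Hence the number of weight-$2$ points beyond $P_1,P_2$ equals $|\{\mu\in\fqn^*:\mu S_1=S_2\}|$, which is either zero or a coset of the multiplicative stabilizer $\mathrm{Stab}(S_1):=\{\nu\in\fqn^*:\nu S_1=S_1\}$. Now the ring $R:=\mathrm{Stab}(S_1)\cup\{0\}$ sits inside the field $\fqn$, hence is itself a subfield containing $\fq$; the equality $\dim_R S_1\cdot [R\colon\fq]=\dim_{\fq}S_1=2$ then forces $R\in\{\fq,\F_{q^2}\}$, so $|\mathrm{Stab}(S_1)|\in\{q-1,q^2-1\}$. Thus $N_2\in\{2,q+1,q^2+1\}$; in the last case the subspaces $W_\mu=S_1(\mathbf e_1+\mu\mathbf e_2)$ together with $U_{P_1}$ and $U_{P_2}$ partition $U^*$, forcing $N_1=0$ and identifying $L_U$ with a Baer subline. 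Reading off $N_1$ from the weight-sum identity in each of the remaining situations $N_2\in\{1,2,q+1\}$ yields the other five cases stated.

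The hard part will be exactly this last step for $N_2\geq 2$: the identification of $\mathrm{Stab}(S_1)$ with $F^*$ for a subfield $F$ of $\fqn$ is what rules out in one stroke all the intermediate values $N_2\in\{3,\ldots,q,\,q+2,\ldots,q^2\}$. The ``stabilizer ring is automatically a subfield'' observation sketched above seems the cleanest route, and has the virtue of avoiding any explicit geometric calculation inside $\PG(U,\fq)=\PG(3,q)$.
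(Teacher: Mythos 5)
Your argument is correct, and it is worth noting that the paper itself gives no proof of this proposition at all: it is quoted verbatim from \cite[Result 1.13]{DeBoeckVdV2020} (see also \cite[Lemma 10]{LavVdV2010}), so your write-up is a genuinely self-contained alternative to a citation. The three ingredients you use are all sound: the weight-sum identity from the preliminaries, the pairwise bound $w_{L_U}(P)+w_{L_U}(Q)\leq 4$ coming from $U_P\cap U_Q=\{\mathbf 0\}$, and, for the delicate case $N_2\geq 2$, the decomposition $U=S_1\mathbf e_1\oplus S_2\mathbf e_2$ together with the observation that the extra weight-$2$ points are parametrized by $\{\mu\in\fqn^*\colon \mu S_1=S_2\}$, a coset of $\mathrm{Stab}(S_1)$. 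The ``stabilizer ring is a subfield'' step is the right way to kill all intermediate values of $N_2$: closure of $R=\mathrm{Stab}(S_1)\cup\{0\}$ under addition follows from $(\nu_1+\nu_2)S_1\subseteq S_1$ plus injectivity of multiplication by a nonzero element, and a finite subring of a field is a field, so $[R\colon\fq]$ divides $\dim_{\fq}S_1=2$ and $|\mathrm{Stab}(S_1)|\in\{q-1,q^2-1\}$. This forces $N_2\in\{2,q+1,q^2+1\}$, and the weight-sum identity then reproduces every line of the statement. Two places deserve one more sentence each in a polished version: (i) the subfield argument just sketched, which you compress into ``sits inside the field $\fqn$, hence is itself a subfield''; and (ii) the identification of the $N_2=q^2+1$ case with a Baer subline, where you should note that $R=\F_{q^2}$ stabilizes $S_2=\mu_0S_1$ as well as $S_1$, so that $U$ is a $2$-dimensional $\F_{q^2}$-subspace of $V$ and $L_U$ is literally an $\F_{q^2}$-linear set of rank $2$. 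Neither point is a gap, only a gloss.
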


As a corollary we get the complete classification of $\fq$-linear sets of rank $4$ in $\PG(1,q^4)$ and their weight distribution.

\begin{corollary}\label{cor:PG1q4}
Let $L_U$ be an $\fq$-linear set of rank $4$ in $\PG(1,q^4)$. 
\begin{itemize}
    \item[(o)] If $|L_U|=q^2+1$, then all points have weight $2$ and $L_U$ is a Baer subline of ${\rm PG}(1,q^4)$.
    \item [(i)] If $|L_U|=q^3+q^2+q+1$, then all points have weight $1$ and $L_U$ is $\mathrm{P}\Gamma\mathrm{L}$-equivalent to 
            \[ \{\langle (x,x^q+\delta x^{q^3}) \rangle_{\mathbb{F}_{q^4}} \colon x \in \mathbb{F}_{q^4}^*\}, \]
            for some $\delta \in \mathbb{F}_{q^4}$ with $\N_{q^4/q}(\delta)\ne 1$.
    \item [(ii)] If $|L_U|=q^3+1$, then $L_U$ is $\mathrm{P}\Gamma\mathrm{L}$-equivalent either to
            \begin{equation}\label{eq:iiA} \{\langle (x,\mathrm{Tr}_{q^4/q}(x)) \rangle_{\mathbb{F}_{q^4}} \colon x \in \mathbb{F}_{q^4}^*\} \end{equation}
            with $1$ point of weight $3$ and $q^3$ points of weight $1$; or to
            \begin{equation}\label{eq:iiB} \{\la(\eta x_0-\eta^2 x_1+x_2,-\eta x_1 +x_3)\ra_{\mathbb{F}_{q^4}} \colon (x_0,\ldots,x_3)\in\fq^4\setminus\{\mathbf{0}\} \} \end{equation}
            for all $\eta \in \mathbb{F}_{q^4}\setminus \mathbb{F}_{q^2}$, with $q+1$ points of weight $2$ and $q^3-q$ points of weight $1$.
    \item [(iii)] If $|L_U|=q^3+q^2+1$, then $L_U$ has $1$ point of weight $2$ and $q^3+q^2$ points of weight $1$; $L_U$ is $\mathrm{P}\Gamma\mathrm{L}$-equivalent either to 
            \[\{\la(x,x^q-x^{q^3})\ra_{\mathbb{F}_{q^4}} \colon x \in \mathbb{F}_{q^4}^*\},\]
            or to 
            \[ \{\la(x_1-\eta_1x_3,-\eta_1 x_0+x_2-\eta_2 x_3)\ra_{\mathbb{F}_{q^4}} \colon (x_0,\ldots,x_3)\in\fq^4\setminus\{\mathbf{0}\} \}\] 
            for all $\eta_1 \in \mathbb{F}_{q^4}\setminus \mathbb{F}_{q^2}$ and $\eta_2 \in \mathbb{F}_{q^4}$ such that $1,\eta_1,\eta_2,\eta_1^2$ are $\fq$-linearly independent.
    \item [(iv)] If $|L_U|=q^3+q^2-q+1$, then $L_U$ has $2$ points of weight $2$ and $q^3+q^2-q-1$ points of weight $1$; $L_U$ is $\mathrm{P}\Gamma\mathrm{L}$-equivalent either to
            \[\{\la(-\xi x_0+x_1,-\eta x_2+x_3)\ra_{\mathbb{F}_{q^4}} \colon (x_0,\ldots,x_3)\in\fq^4\setminus\{\mathbf{0}\} \}\] 
            for all $\eta \in \mathbb{F}_{q^4}\setminus\mathbb{F}_{q^2}$ and some $\xi \in \mathbb{F}_{q^2}\setminus\fq$, or to
            \[ \{\la(x_0+\eta_1x_2+x_3,x_1+\eta_2^{-1}x_3)\ra_{\mathbb{F}_{q^4}} \colon (x_0,\ldots,x_3)\in\fq^4\setminus\{\mathbf{0}\} \}\] 
            for all $\eta_1,\eta_2\in \mathbb{F}_{q^4}\setminus\mathbb{F}_{q^2}$ such that $1,\eta_1,\eta_2,\eta_1\eta_2$ are $\fq$-linearly independent.
\end{itemize}
\end{corollary}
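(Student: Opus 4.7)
The plan is to combine the cone construction recalled just before the statement with Theorem \ref{cor:BoPol} and Proposition \ref{prop:rank4distribution}. I first embed the line $\PG(V,\fqq)=\PG(1,q^4)$ into a projective plane $\Sigma=\PG(\overline{V},\fqq)$, with $\overline{V}=V\oplus\la\mathbf{v}\ra_{\fqq}$, and form the rank-$5$ $\fq$-linear set $L_{\la U,\mathbf{v}\ra_{\fq}}\subset\Sigma$. As recalled immediately before the statement this is an $\fq$-linear blocking set of R\'edei type with R\'edei line $\ell=\PG(V,\fqq)$, and since $\mathbf{v}\notin V$ one has $\la U,\mathbf{v}\ra_{\fq}\cap V=U$, so $L_{\la U,\mathbf{v}\ra_{\fq}}\cap\ell=L_U$. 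Thus the $\mathrm{P}\Gamma\mathrm{L}(2,q^4)$-equivalence class of $L_U$ and its weight distribution are encoded in the R\'edei trace of the cone.

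I then feed the cone into Theorem \ref{cor:BoPol} and read off its trace on $\ell$ case by case. The parametrizations there are written in coordinates making the R\'edei line a coordinate line, so the trace is immediate; for instance $(B_1)$ restricts on $\{X_0=0\}$ to $\{\la(x,\mathrm{Tr}_{q^4/q}(x))\ra_{\fqq}:x\in\fqq^*\}$, giving \eqref{eq:iiA}, and $(C_{12})$ restricts on $\{X_4=0\}$ to \eqref{eq:iiB}. The analogous computations produce the correspondences $(B_{21})\to$ first representative of case (iii), $(C_{15})\to$ second representative of case (iii), $(B_{22})\to$ first representative of case (iv), and $(C_{13})\to$ second representative of case (iv); the identification $(A)\to$ case (o) uses that, when $L_U$ is a Baer subline, the cone coincides as a point set with a Baer subplane, which matches case $(A)$ with cardinality $q^4+q^2+1$. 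Finally, $(C_{11})$ and $(C_{14})$ both produce blocking sets of size $q^4+q^3+q^2+q+1$, hence scattered rank-$4$ traces on $\ell$; the unified polynomial form $\{\la(x,x^q+\delta x^{q^3})\ra_{\fqq}\}$ with $\N_{q^4/q}(\delta)\neq 1$ in case (i) then follows from the classification of scattered rank-$4$ $\fq$-linear sets of $\PG(1,q^4)$ proved in \cite{LavVdV2010}. The internal weight distribution within each normal form is a short calculation from the explicit parametrization and matches exactly one of the rows of Proposition \ref{prop:rank4distribution}: subtracting the $q^4$ off-line points $\la\mathbf{u}+\mathbf{v}\ra_{\fqq}$ of the cone from the blocking-set sizes in Theorem \ref{cor:BoPol} yields precisely the sizes $q^2+1$, $q^3+1$, $q^3+q^2+1$, $q^3+q^2-q+1$, $q^3+q^2+q+1$ appearing in Proposition \ref{prop:rank4distribution}.

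The main obstacle is bookkeeping in the matrix-form cases $(B_{22}),(C_{13}),(C_{14}),(C_{15})$: translating each of these parametrizations into an explicit $4$-dimensional $\fq$-subspace of $\fqq^2$ defining the trace on $\ell$, and then matching it with the corresponding representative in the corollary. By the simplicity of rank-$4$ $\fq$-linear sets of $\PG(1,q^4)$ recalled just before the statement, $\mathrm{P}\Gamma\mathrm{L}(2,q^4)$-equivalence of the traces reduces to $\Gamma\mathrm{L}(2,q^4)$-equivalence of the underlying $\fq$-subspaces of $\fqq^2$, which is a finite algebraic check; once this has been carried out in each case, the classification follows. The trivial case $|L_U|=1$, omitted from the statement, does not arise as a R\'edei trace, because then the cone lies on a line and fails to be a blocking set of $\PG(2,q^4)$.
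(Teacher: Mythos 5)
Your proposal follows essentially the same route as the paper: embed $L_U$ as the trace of the R\'edei-type cone $L_{\langle U,\mathbf{v}\rangle_{\fq}}$, invoke Theorem \ref{cor:BoPol} to classify the cone, read off the intersection with the R\'edei line case by case, and get the weight distributions from Proposition \ref{prop:rank4distribution}. Two small points of difference worth fixing: the unified form $x^q+\delta x^{q^3}$ with $\N_{q^4/q}(\delta)\ne1$ in case (i) comes from the classification of maximum scattered linear sets in \cite[Theorem 3.4]{CsZ2018}, not from \cite{LavVdV2010} (which only gives the possible weight distributions); and when you ``read off the trace on the coordinate line'' you are implicitly assuming the equivalence carrying the cone to the normal form sends $\ell$ to that particular line, which is automatic only when the R\'edei line is unique --- the paper covers this by citing \cite[Theorem 5]{LunPol2000}, which says that only the case $(B_1)$ admits more than one R\'edei line.
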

\begin{proof}
Assume that $L_U$ is contained in the line $\ell=\PG(1,q^4)\subset \PG(2,q^4)$, and is not a Baer subline of $\ell$.
Consider $\mathbf{v}\in \overline{V}\setminus V$, then $L_{\langle U,\mathbf{v}\rangle_{\fq}}$ is an $\fq$-linear blocking set in $\PG(2,q^4)$.
By Theorem \ref{cor:BoPol}, $L_{\langle U,\mathbf{v}\rangle_{\fq}}$ is $\mathrm{P}\Gamma\mathrm{L}$-equivalent to one of $L_{B,1}$, $L_{B,21}$, $L_{B,22}$, $L_{C,11}$, $L_{C,12}$, $L_{C,13}$, $L_{C,14}$ and $L_{C,15}$ as defined in Theorem \ref{cor:BoPol}.
By \cite[Theorem 5]{LunPol2000}, the $\fq$-linear blocking sets with more than one R\'edei line are exactly the ones which are $\mathrm{P}\Gamma\mathrm{L}$-equivalent to $L_{B,1}$. All the remaining $\fq$-linear blocking sets have a unique R\'edei line; in this case $L_U$, being simple as a linear set, is $\mathrm{P}\Gamma\mathrm{L}$-equivalent to the intersection of one of the blocking sets $L_{B,21}$, $L_{B,22}$, $L_{C,11}$, $L_{C,12}$, $L_{C,13}$, $L_{C,14}$ and $L_{C,15}$ with its R\'edei line.
The explicit shape and the size of $L_U$ are obtained using \cite[Theorem 3.4]{CsZ2018} for the blocking sets $L_{C,11}$, $L_{C,14}$, and using the explicit form of the intersection with their R\'edei lines for the remaining blocking sets.

The weight distribution of $L_U$ follows from Proposition \ref{prop:rank4distribution}. In fact, if $|L_U|\ne q^3+1$, then the size of $L_U$ determines uniquely the weight distribution of $L_U$. If $|L_U|=q^3+1$, then $L_U$ is ${\rm P\Gamma L}$-equivalent to one of the linear sets \eqref{eq:iiA} and \eqref{eq:iiB}, in which the weight of the point $\langle(1,0)\rangle_{\mathbb{F}_{q^4}}$ is respectively $3$ and $2$; the claim follows.
\end{proof}

In particular, this classifies all the clubs of rank $4$ of $\mathrm{PG}(1,q^4)$.

\begin{corollary}\label{cor:PG1q4clubs}
Any $3$-club of rank $4$ of $\PG(1,q^4)$ is $\mathrm{P}\Gamma\mathrm{L}$-equivalent to 
\[ \{\langle (x,\mathrm{Tr}_{q^4/q}(x)) \rangle_{\mathbb{F}_{q^4}} \colon x \in \mathbb{F}_{q^4}^*\}. \]
Any $2$-club of rank $4$ of $\PG(1,q^4)$ is $\mathrm{P}\Gamma\mathrm{L}$-equivalent either to
\[ \{\la(x,x^q-x^{q^3})\ra_{\mathbb{F}_{q^4}} \colon x \in \mathbb{F}_{q^4}^*\}, \]
or to
\[ \{\la(x_1-\eta_1x_3,-\eta_1 x_0+x_2-\eta_2 x_3)\ra_{\mathbb{F}_{q^4}} \colon (x_0,\ldots,x_3)\in\fq^4\setminus\{\mathbf{0}\} \}\]
for all $\eta_1\in\mathbb{F}_{q^4}$ and $\eta_2\in\mathbb{F}_{q^4}$ such that $1,\eta_1,\eta_2,\eta_1^2$ are $\fq$-linearly independent.
\end{corollary}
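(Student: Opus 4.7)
The plan is to derive this classification as an immediate consequence of Corollary \ref{cor:PG1q4}, simply by filtering the possible weight distributions through the definition of an $i$-club. Recall that an $i$-club of rank $n$ in $\PG(1,q^n)$ is by definition an $\F_q$-linear set having exactly one point of weight $i$ and all the remaining points of weight $1$.

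First I would handle the $3$-clubs. Inspecting Proposition \ref{prop:rank4distribution}, the only weight distribution of rank $4$ in $\PG(1,q^4)$ that features a point of weight $3$ is $|L_U|=q^3+1$ with exactly one point of weight $3$ and $q^3$ points of weight $1$. By Corollary \ref{cor:PG1q4}(ii), the only $\mathrm{P}\Gamma\mathrm{L}$-class of $\F_q$-linear sets realizing this distribution is represented by \eqref{eq:iiA}, i.e.\ by the trace linear set, and this gives the first assertion.

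Next I would handle the $2$-clubs. Again from Proposition \ref{prop:rank4distribution}, among the weight distributions of rank $4$ in $\PG(1,q^4)$ the ones having some point of weight $2$ are exactly the Baer subline (which has $q^2+1$ points all of weight $2$), the second case in (ii) (with $q+1$ points of weight $2$), the case (iii) (with a unique point of weight $2$) and the case (iv) (with $2$ points of weight $2$). Of these, only case (iii) has exactly one point of weight $2$ and all others of weight $1$; so the $2$-clubs are precisely the linear sets $L_U$ with $|L_U|=q^3+q^2+1$. By Corollary \ref{cor:PG1q4}(iii), up to $\mathrm{P}\Gamma\mathrm{L}$-equivalence these are exactly the two families listed in the statement.

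There is no real obstacle here: the heavy lifting has already been done in Corollary \ref{cor:PG1q4}, which rests on the classification of $\F_q$-linear blocking sets of $\PG(2,q^4)$ from \cite{BoPol} together with Proposition \ref{prop:rank4distribution}. The only thing to watch is that the defining property of an $i$-club forces both the existence of a point of weight $i$ and the absence of any other point of weight larger than $1$, which in particular rules out the second representative of case (ii) (having $q+1$ points of weight $2$) and the entire case (iv) from being $i$-clubs.
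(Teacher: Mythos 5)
Your proposal is correct and follows essentially the same route as the paper, which presents this corollary as an immediate consequence of Corollary \ref{cor:PG1q4} together with the weight distributions in Proposition \ref{prop:rank4distribution}. The only cosmetic difference is that for the $3$-club case the paper can alternatively invoke the known classification of clubs of maximum weight $n-1$ (first item of Theorem \ref{th:resume}), but your filtering of the weight distributions reaches the same conclusion and is self-contained.
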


\begin{remark}
By Proposition \ref{prop:rank4distribution}, the integers $r$ for which there exists an $r$-fat polynomial over $\mathbb{F}_{q^4}$ are $0$, $1$, $2$ and $q+1$.
\end{remark}

\section{$r$-fat polynomials over \texorpdfstring{$\F_{q^5}$}{Lg}}\label{sec:q5}

Let $f(x)$ be an $r$-fat polynomials over $\mathbb{F}_{q^5}$.
By the Main Theorem in \cite{DeBoeckVdV}, 
\[ r\in\{0,1\}\cup[q-2\sqrt{q}+1,q+2\sqrt{q}+1]\cup\{2q,2q+1,2q+2,3q,3q+1,q^2+1\}. \]
In the following, we consider the case $r=1$.

Let $q=2^{2s}$ with $s\geq 1$, $w\in\mathbb{F}_4\subseteq\fq$ be such that $w^2=w+1$, and
\[ f_w(x)=x^{q^4}+wx^{q^3}+wx^q+x \in\mathbb{F}_{q^5}[x]. \]

\begin{theorem}\label{th:3club}
Let $q=2^{2s}$ with $s\geq 1$. The polynomial $f_w$ is a $1$-fat polynomial over $\mathbb{F}_{q^5}$ of index $0$ and maximum weight $3$.
\end{theorem}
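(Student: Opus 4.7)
The statement reduces to two verifications: (a) $\dim_{\mathbb{F}_q}\ker f_w=3$, so that $\langle(1,0)\rangle_{\mathbb{F}_{q^5}}$ has weight $3$ in $L_{f_w,0}$; and (b) $\dim_{\mathbb{F}_q}\ker(f_w-m\,\mathrm{id})\le 1$ for every $m\in\mathbb{F}_{q^5}\setminus\{0\}$. Together (a) and (b) force the $1$-fat property (exactly one point of weight $>1$, namely $\langle(1,0)\rangle$) and the maximum weight to be exactly $3$.

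For (a) and for the subcase $m\in\mathbb{F}_q^*$ of (b), I would rely on the Frobenius module structure of $\mathbb{F}_{q^5}$. Since $w\in\mathbb{F}_4\subseteq\mathbb{F}_q$, grouping Galois orbits of a primitive fifth root of unity $\zeta$ (with $w=\zeta+\zeta^4$ and $w^2=\zeta^2+\zeta^3$) yields the pairwise coprime factorization
\[T^5-1=(T+1)(T^2+wT+1)(T^2+w^2T+1)\]
over $\mathbb{F}_q$. A direct expansion gives $F(T):=T^4+wT^3+wT+1=(T+1)^2(T^2+wT+1)$. Writing $f_w=F(\Phi)$ for Frobenius $\Phi\colon x\mapsto x^q$ and using the CRT decomposition $\mathbb{F}_{q^5}=V_0\oplus V_1\oplus V_2$ of $\mathbb{F}_q$-dimensions $1,2,2$, the operator $F(\Phi)$ vanishes on $V_0\oplus V_1$ and reduces to the unit $w\Phi+w^2$ on $V_2$; hence $\dim_{\mathbb{F}_q}\ker f_w=3$. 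The same argument applied to $F(T)-m\in\mathbb{F}_q[T]$ for $m\in\mathbb{F}_q^*$ shows that $\gcd(F(T)-m,T^5-1)$ has degree at most $1$ (using $w^3=1$ when one examines the reductions mod each factor), giving $\dim_{\mathbb{F}_q}\ker(f_w-m\,\mathrm{id})\le 1$.

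The case $m\in\mathbb{F}_{q^5}\setminus\mathbb{F}_q$ of (b) is the main obstacle, since multiplication by $m$ does not commute with $\Phi$ and the module decomposition is unavailable. I argue by contradiction: assume $x,y\in\ker(f_w-m\,\mathrm{id})$ are $\mathbb{F}_q$-linearly independent. Using the brackets $[x,y]_i:=x^{q^i}y+xy^{q^i}$, the identity $f_w(x)y+f_w(y)x=0$ (char $2$) reads $[x,y]_4+w[x,y]_3+w[x,y]_1=0$; setting $u=[x,y]_1$, $v=[x,y]_2$ and using $[x,y]_{5-i}^{q^i}=[x,y]_i$ (from $q^5\equiv 1$ on $\mathbb{F}_{q^5}$), a single Frobenius step collapses this to
\[v=u^{q^2}+w^2u^q.\]
For a second, genuinely independent identity I exploit the composition $f_w=g\circ h$ with $h(X)=X^{q^2}+X$ and $g(Z)=Z^{q^2}+wZ^q+Z$: the pair $z_i:=h(x_i)$ is again $\mathbb{F}_q$-linearly independent (otherwise $x_1-x_2\in\ker h=\mathbb{F}_q$ quickly forces $m=0$) and satisfies $g(z_i)=mx_i$. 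Applying the bracket identity to $(z_1,z_2)$, expanding $[z_1,z_2]_1$ and $[z_1,z_2]_2$ in terms of $u,v$, and substituting the first relation, yields
\[u^{q^4}+w^2u^{q^3}+w^2u^{q^2}+u^q=m\,(u^{q^2}+w^2u^q).\]

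The final step, which I expect to be the most delicate, is to close the contradiction from these two identities. My plan is to iterate Frobenius on both relations, producing five conjugate equations which together form an $\mathbb{F}_q$-linear system in $u,u^q,\dots,u^{q^4}$ with coefficient matrix polynomial in $m,m^q,\dots,m^{q^4}$; showing via a direct determinant computation (exploiting $w^3=1$ and the explicit coefficients above) that this matrix is nonsingular whenever $m\ne 0$ forces $u=0$, contradicting the $\mathbb{F}_q$-linear independence of $x,y$ and completing the argument.
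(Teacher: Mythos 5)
Your reduction of the statement to the two claims (a) and (b), your module-theoretic proof of (a) via the factorization $T^4+wT^3+wT+1=(T+1)^2(T^2+wT+1)$, and your treatment of the subcase $m\in\mathbb{F}_q^*$ are all correct, and they take a genuinely different (and arguably cleaner) route than the paper, which instead computes North-East principal minors of the Dickson matrices of $f_w(x)$ and $f_w(x)+mx$ and invokes Csajb\'ok's rank criterion. Your two bracket identities for $m\notin\mathbb{F}_q$ also check out: I verified both that $v=u^{q^2}+w^2u^q$ and that the relation coming from the factorization $f_w=g\circ h$ reduces, after substitution, exactly to $u^{q^4}+w^2u^{q^3}+w^2u^{q^2}+u^q=m\,(u^{q^2}+w^2u^q)$.

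The gap is in the final step, and it is not merely that the computation is left undone: the computation would fail. Eliminating $v$ leaves the single $q$-linearized relation $L_m(u):=u^{q^4}+w^2u^{q^3}+(w^2+m)u^{q^2}+(1+w^2m)u^q=0$; the five Frobenius conjugates form exactly the system you describe, and the determinant of its coefficient matrix (the Dickson matrix of $L_m$) vanishes if and only if $\ker L_m\neq\{0\}$. That kernel is nontrivial for many $m\neq0$. Indeed, writing $L_m(u)=M_m(u^q)$ with $M_m(t)=t^{q^3}+w^2t^{q^2}+(w^2+m)t^q+(1+w^2m)t$, one has $M_m(t)=0$ with $t\neq0$ exactly when $m=\bigl(t^{q^3}+w^2t^{q^2}+w^2t^q+t\bigr)/\bigl(t^q+w^2t\bigr)$; the denominator never vanishes for $t\neq0$ because $\mathrm{N}_{q^5/q}(w^2)=w\neq1$. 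The fibers of this map on $\mathbb{F}_{q^5}^*$ have size $q^{j}-1$ with $j\leq3$, so its image has at least $(q^5-1)/(q^3-1)>q$ elements and therefore contains elements of $\mathbb{F}_{q^5}\setminus\mathbb{F}_q$. For each such $m$ your matrix is singular and no contradiction follows. The underlying reason is that $L_m(u)=0$ is only a necessary condition: it does not encode the requirement that $u=x^qy+xy^q$ and $v=x^{q^2}y+xy^{q^2}$ arise from an actual two-dimensional $\mathbb{F}_q$-subspace contained in $\ker(f_w-m\,\mathrm{id})$ (equivalently, that $uT^{q^2}+vT^q+u^qT$ splits completely with $q^2$ roots). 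To close the argument you would need to adjoin such a constraint, or do as the paper does: work with the full $5\times5$ Dickson matrix of $f_w(x)+mx$ and show that its order-$4$ and order-$5$ North-East principal minors cannot vanish simultaneously unless $m=0$.
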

\proof
First we show that $\dim_{\fq}\ker(f_w)=3$. To this aim, consider the Dickson matrix 
\[ M^{(0)}:=\left(
\begin{array}{ccccc}
    1 & w & 0 & w & 1\\
    1 & 1 & w & 0 & w\\
    w & 1 & 1 & w & 0\\
    0 & w & 1 & 1 & w\\
    w & 0 & w & 1 & 1\\
\end{array}
\right)\]
associated with $f_w(x)$.
For any $j\leq5$, let $M_j$ be the North-East principal minor of order $j$ of $M^{(0)}$. Then
\[ M_2=w^2\neq0,\,\, M_3=M_4=M_5=0,\]
and hence $\dim_{\fq}\ker(f_w)=3$ by \cite[Theorem 1.3]{qres}.

Now we prove that any $m\in \mathbb{F}_{q^5}^*$ satisfies $\dim_{\fq}\ker(f_w(x)+mx)\leq1$. To this aim, consider the Dickson matrix
\[M^{(m)}:=\left(
\begin{array}{ccccc}
    1+m & w & 0 & w & 1\\
    1 & 1+m^q & w & 0 &w\\
    w & 1 & 1+m^{q^2} & w & 0\\
    0 & w & 1 & 1+m^{q^3} & w\\
    w & 0 & w & 1 & 1+m^{q^4}\\
\end{array}
\right)\]
associated with $f_w(x)+mx$.
For simplicity, let $m_i=m^{q^i}$ for any $i=0,\ldots,4$. 
Let $M_4=m_1m_2m_3 + w m_1m_2 + m_1m_3 + wm_2m_3$ be the North-East principal minor of order $4$ of $M^{(m)}$, and suppose that $M_4=0$. 

If $m_1 m_2 + m_1 + w m_2= 0$, then $m_1m_2=0$, that is $m=0$, a contradiction.
Then $m_1 m_2 + m_1 + w m_2\neq 0$, whence $m_3=\frac{w m_1 m_2}{m_1 m_2 + m_1 + w m_2}$. Now,
    \begin{eqnarray*}
    m_4&=&(m_3)^q= \frac{m_1 m_2}{m_1 m_2 + w^2 m_1 + w^2 m_2},\\
    m_0&=&(m_4)^q=\frac{w m_1 m_2}{m_1 m_2 + w m_1 + m_2}.
    \end{eqnarray*}
    Clearly, all the denominators are nonzero, otherwise $m=0$. 
    The minor $M_5$ of order $5$ of $M^{(m)}$ equals
    \begin{eqnarray*}
    M_5&=& m_0 m_1 m_2 m_3 m_4 + m_0 m_1 m_2 m_3 + m_0 m_1 m_2 m_4 + w^2 m_0 m_1 m_2\\
    &&+ m_0 m_1 m_3 m_4 + m_0 m_1 m_3 + w^2 m_0 m_1 m_4 + m_0 m_2 m_3 m_4\\
    &&+ m_0 m_2 m_3 + m_0 m_2 m_4 + w^2 m_0 m_3 m_4 + m_1 m_2 m_3 m_4\\
    &&+w^2 m_1 m_2 m_3 + m_1 m_2 m_4 + m_1 m_3 m_4 + w^2 m_2 m_3 m_4,
    \end{eqnarray*}
    which can be rewritten in $m_1$ and $m_2$ as follows
    $$\frac{m_1^4 m_2^4}{(m_1 m_2 + w^2 m_1 + w^2 m_2)(m_1 m_2 + w m_1 + m_2)(m_1 m_2 + m_1 + w m_2)}.$$
    It is readily seen that $M_5=0$ yields $m=0$, a contradiction. Summing up, if $m\ne0$ then $M_4$ and $M_5$ cannot vanish at the same time and therefore $\dim_{\fq}\ker(f_w(x)+mx)\leq1$ by \cite[Theorem 1.3]{qres}.

Therefore $f_w(x)$ is a $1$-fat polynomial over $\F_{q^5}$ of index $0$ and maximum weight $3$.
\endproof

\begin{remark}
Similar arguments to the ones used in the proof of Theorem \ref{th:3club} show that $f_w(x)$ is a $1$-fat polynomial of index $t$ and maximum weight $3$ whenever $t \in\{0,1,3,4\}$.
\end{remark}

\begin{corollary}
Let $q=2^{2s}$ with $s\geq 1$. The linear set $L_{f_w}$ is a $3$-club in $\PG(1,q^5)$.
\end{corollary}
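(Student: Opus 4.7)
The plan is to deduce this corollary directly from Theorem \ref{th:3club} via the dictionary between $1$-fat polynomials and clubs recalled at the beginning of Section \ref{sec:known}: a linearized polynomial $f(x)\in\fqn[x]$ is $1$-fat of index $t$ with maximum weight $i$ if and only if $L_{f,t}$ is an $i$-club of rank $n$ in $\PG(1,q^n)$. Applying this to $n=5$, $t=0$, $i=3$, and the polynomial $f_w$ yields exactly the statement, so no substantial new computation is required.

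More concretely, I would first recall from \eqref{eq:defLft} that $L_{f_w}=L_{f_w,0}=\{\la(x,f_w(x))\ra_{\mathbb{F}_{q^5}} \colon x\in\mathbb{F}_{q^5}^*\}$ is an $\fq$-linear set of rank $5$ in $\PG(1,q^5)$, and that by Remark \ref{rk:weight} the weight in $L_{f_w}$ of the point $\la(1,m)\ra_{\mathbb{F}_{q^5}}$ equals $\dim_{\fq}\ker(f_w(x)-mx)$, while $\la(0,1)\ra_{\mathbb{F}_{q^5}}$ has weight zero. Then I would invoke Theorem \ref{th:3club}, according to which there is exactly one value $m\in\mathbb{F}_{q^5}$ (namely $m=0$, as exhibited in the first part of its proof via the Dickson matrix $M^{(0)}$) with $\dim_{\fq}\ker(f_w(x)-mx)>1$, and for this value the dimension is exactly $3$.

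Combining these two observations, $L_{f_w}$ contains a unique point of weight greater than one, this point has weight $3$, and all remaining points have weight $1$. This is precisely the definition of a $3$-club of rank $5$ in $\PG(1,q^5)$, which concludes the proof. There is no genuine obstacle: the entire content has already been established in Theorem \ref{th:3club}, and the corollary is a reformulation through the linear-set/polynomial correspondence.
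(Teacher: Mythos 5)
Your proposal is correct and matches the paper's intent exactly: the corollary is stated without proof precisely because it is the immediate translation of Theorem \ref{th:3club} through the correspondence (recalled at the start of Section \ref{sec:known} and in Remark \ref{rk:weight}) between $1$-fat polynomials of index $t$ and maximum weight $i$ and $i$-clubs $L_{f,t}$. No further comment is needed.
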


Now we investigate some properties of $L_{f_w}$.

\begin{proposition}\label{prop:simple}
Let $q=2^{2s}$ with $s\geq1$, and $g(x)\in\mathbb{F}_{q^5}[x]$ be such that $L_{g}=L_{f_w}$.
Then there exists $\lambda\in\mathbb{F}_{q^5}^*$ such that
$\lambda U_{g}\in\{U_{f_w},U_{\hat{f}_w}\}$.
Furthermore, $L_{f_w}$ is simple.
\end{proposition}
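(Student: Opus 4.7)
The plan is to prove the two assertions of the proposition sequentially.

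\emph{Step 1: Characterization of $U_g$.} By Theorem \ref{th:3club}, $L_{f_w}$ is a $3$-club whose unique weight-$3$ point is $\langle(1,0)\rangle_{\fqn}$, so any $q$-polynomial $g$ with $L_g=L_{f_w}$ must satisfy $\dim_{\fq}\ker(g)=3$. By Proposition \ref{prop:adjoint}, $U_{\hat{f}_w}$ also defines $L_{f_w}$, providing a second candidate a priori distinct from $U_{f_w}$. To show that up to an $\mathbb{F}_{q^5}^*$-scalar no other $U_g$ is possible, I would write $g(x)=\sum_{j=0}^{4}b_j x^{q^j}$ and translate the equality $L_g=L_{f_w}$ into coefficient constraints via the Dickson-matrix criterion of \cite{qres} already used in the proof of Theorem \ref{th:3club}: for every $m\in\mathbb{F}_{q^5}$, the vanishing pattern of the North-East principal minors of the Dickson matrix of $g(x)-mx$ must coincide with that of $f_w(x)-mx$. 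Exploiting the palindromic shape $(1,w,0,w,1)$ of $f_w$ and the fact that the only weight-$3$ point forces a uniquely-structured kernel, these relations reduce the $b_j$'s to those of a scalar rescaling of either $f_w$ or $\hat{f}_w$.

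\emph{Step 2: Simplicity of $L_{f_w}$.} Since $w\in\mathbb{F}_4\subseteq\fq$, one has $w^q=w$, and a direct computation yields the compact identity $\hat{f}_w(x)=f_w(x)^q=f_w(x^q)$. I would then propose the explicit $\mathbb{F}_{q^5}$-linear transformation
\[
\varphi\colon\ (u,v)\longmapsto (u+w^2 v,\ v),
\]
which belongs to $\mathrm{GL}(2,\mathbb{F}_{q^5})\subseteq \Gamma\mathrm{L}(2,\mathbb{F}_{q^5})$. Using $w^q=w$, $1+w^2=w$ and $w^4=w$, one checks the identity
\[
\hat{f}_w\bigl(x+w^2 f_w(x)\bigr)\;=\;\hat{f}_w(x)+w^2\,\hat{f}_w(f_w(x))\;=\;f_w(x)
\]
for every $x\in\mathbb{F}_{q^5}$, which implies $\varphi(U_{f_w})=U_{\hat{f}_w}$. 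Combined with Step 1, this shows that every subspace $W=U_g$ with $L_W=L_{f_w}$ lies in the $\Gamma\mathrm{L}(2,\mathbb{F}_{q^5})$-orbit of $U_{f_w}$, proving that $L_{f_w}$ is simple.

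\emph{Main obstacle.} Step 2 is a short verification once the candidate $\varphi$ has been guessed. The real difficulty is Step 1: rigorously ruling out all other $3$-dimensional $\fq$-subspaces of $\mathbb{F}_{q^5}$ as possible kernels of some competing $g$, and, for each admissible kernel, all other coefficient choices. This typically requires a moderately lengthy Dickson-matrix case analysis, or an appeal to the general framework of \cite{CsMP} on the $\Gamma\mathrm{L}$-equivalence of subspaces defining a prescribed linear set.
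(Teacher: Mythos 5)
Your Step~2 is correct and is in fact identical to the paper's: since $w^2=w+1$, your map $(u,v)\mapsto(u+w^2v,v)$ is precisely the matrix $\left(\begin{smallmatrix}1&w+1\\0&1\end{smallmatrix}\right)$ that the paper uses to send $U_{f_w}$ to $U_{\hat f_w}$ (the paper asserts this identity without computation, and your verification via $\hat f_w=f_w\circ\sigma_q$ is a valid way to check it).

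The problem is Step~1, which you flag as ``the main obstacle'' and then do not carry out. As written, the proposal contains no proof of the first assertion of the proposition: you describe a Dickson-matrix elimination over the five unknown coefficients of $g$ and claim that ``these relations reduce the $b_j$'s to those of a scalar rescaling of either $f_w$ or $\hat f_w$,'' but none of this is executed, and it is exactly here that all the difficulty lies. The paper avoids this computation entirely by quoting a known classification theorem: since $L_{f_w}$ is a $3$-club (Theorem~\ref{th:3club}) it is not of pseudoregulus type (\cite[Remark 5.6]{CsMP}), and then the Carlitz-type result \cite[Theorem 1.4]{CsMPq5}, which is specific to $\PG(1,q^5)$, states that any $q$-polynomial $g$ with $L_g=L_{f_w}$ must satisfy $g(x)=f_w(\lambda x)/\lambda$ or $g(x)=\hat f_w(\lambda x)/\lambda$ for some $\lambda\in\mathbb{F}_{q^5}^*$. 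That citation is the missing key idea; the reference you gesture at, \cite{CsMP}, is a different paper and does not by itself yield this dichotomy. There is also a logical soft spot in your plan even as a plan: the hypothesis $L_g=L_{f_w}$ is an equality of \emph{point sets}, which does not immediately give that ``the vanishing pattern of the North-East principal minors of the Dickson matrix of $g(x)-mx$ coincides with that of $f_w(x)-mx$ for every $m$'' --- that statement is the pointwise equality of weights $w_{L_g}(P)=w_{L_{f_w}}(P)$, which is a priori stronger and would itself need justification (indeed, a point set of size $q^4+q^3+1$ and rank $5$ admits two distinct weight distributions). So Step~1 needs either the appeal to \cite[Theorem 1.4]{CsMPq5} or a genuinely completed computation; in its current form the proof is not done.
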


\begin{proof}
By Theorem \ref{th:3club}, $L_{g}$ is a $3$-club.
Therefore $L_{g}$ is not of pseudoregulus type; see \cite[Remark 5.6]{CsMP}.
By \cite[Theorem 1.4]{CsMPq5}, there exists $\lambda\in\mathbb{F}_{q^5}$ such that $g(x)=f_w(\lambda x)/\lambda$ or $g(x)=\hat{f}_w(\lambda x)/\lambda$, and hence $\lambda U_{g}\in\{U_{f_w},U_{\hat{f}_w}\}$.
Since
\[ \begin{pmatrix} 1 & w+1 \\ 0 & 1 \end{pmatrix}\cdot U_{f_w} =U_{\hat{f}_w}, \]
$U_{f_w}$ and $U_{\hat{f}_w}$ are ${\rm \Gamma L}(2,q^5)$-equivalent.
The claim then follows by the first part of the proof.
\end{proof}

\begin{proposition}
Let $q=2^{2s}$ with $s\geq 1$. 
The stabilizer of $U_{f_w}$ in $\mathrm{\Gamma L}(2,q^5)$ is
\[ \left\{ \begin{pmatrix} a & b\\ 0 & a \end{pmatrix} \colon a \in \fq^*,\,b^{q^2}+(w+1)b^q+b=0 \right\}\rtimes\langle\Phi^2 \rangle \cong (E_{q^2}\times C_{q-1})\rtimes C_{5s}, \]
where $\Phi:(x,y)\mapsto (x^2,y^2)$.
The stabilizer of $L_{f_w}$ in ${\rm P\Gamma L}(2,q^5)$ is
\[ \left\{ \begin{bmatrix} 1 & b\\ 0 & 1 \end{bmatrix} \colon b^{2q^2}+wb^{q^2}+wb^{2q}+b^q+b^2+wb=0 \right\}\rtimes\langle\Phi^2 \rangle \cong E_{2q^2}\rtimes C_{5s}. \]
\end{proposition}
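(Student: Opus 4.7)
The plan is to determine the stabilizer of $U_{f_w}$ in $\mathrm{\Gamma L}(2,q^5)$ by direct analysis of the defining condition, and then to invoke Proposition~\ref{prop:simple} to pass to the stabilizer of $L_{f_w}$ in $\mathrm{P\Gamma L}(2,q^5)$. Throughout the arithmetic I will exploit that $w\in\mathbb{F}_4\subseteq\mathbb{F}_q$ gives $w^q=w$, $w^2=w+1$, $w^3=1$, $w^4=w$, together with $x^{q^5}=x$ for $x\in\mathbb{F}_{q^5}$.

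Since $\langle(1,0)\rangle_{\mathbb{F}_{q^5}}$ is the unique point of $L_{f_w}$ of weight larger than $1$ by Theorem~\ref{th:3club}, every collineation stabilizing $U_{f_w}$ fixes it, and hence is represented by an upper-triangular matrix composed with a power of $\Phi$. For the linear part I would impose $A(U_{f_w})=U_{f_w}$ for $A=\begin{pmatrix}a&b\\0&d\end{pmatrix}$, which is equivalent to the functional identity $f_w(ax+bf_w(x))=df_w(x)$. Expanding $f_w(bf_w(x))$, reducing modulo $x^{q^5}-x$, and comparing the five coefficients of $x^{q^i}$, all equations collapse (using the identities on $w$) to $a=d\in\mathbb{F}_q^*$ together with the single linearized condition $b^{q^2}+(w+1)b^q+b=0$. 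To count solutions, I would note that in the symbolic ring $\mathbb{F}_q[\tau]$ the polynomial $\tau^2+w^2\tau+1$ divides $\tau^5-1=(\tau-1)(\tau^4+\tau^3+\tau^2+\tau+1)$ by direct polynomial division, so the associated linearized polynomial has kernel of $\mathbb{F}_q$-dimension exactly $2$ in $\mathbb{F}_{q^5}$, i.e.\ exactly $q^2$ roots.

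For the semilinear part, $f_w(x^4)=f_w(x)^4$ (using $w^4=w$) shows $\Phi^2$ stabilizes $U_{f_w}$. To exclude odd powers, note that $\Phi(U_{f_w})$ is the subspace defined by $f_{w^2}(y):=y^{q^4}+(w+1)y^{q^3}+(w+1)y^q+y$, and a coefficient comparison entirely analogous to the previous step forces the top-left entry of any upper-triangular $A\in\mathrm{GL}(2,q^5)$ satisfying $A(U_{f_{w^2}})=\mu U_{f_w}$ or $\mu U_{\hat f_w}$ to vanish, contradicting $A\in\mathrm{GL}$. Hence by Proposition~\ref{prop:simple} no element with odd Frobenius part stabilizes $L_{f_w}$, and in particular no such element stabilizes $U_{f_w}$. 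Combining with the linear part, $\mathrm{Stab}(U_{f_w})=(E_{q^2}\times C_{q-1})\rtimes\langle\Phi^2\rangle$, with $\Phi^2$ acting on matrix entries by raising to the fourth power.

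Finally, for the stabilizer of $L_{f_w}$ in $\mathrm{P\Gamma L}(2,q^5)$, Proposition~\ref{prop:simple} gives
\[
\mathrm{Stab}_{\mathrm{\Gamma L}}(L_{f_w})=\mathbb{F}_{q^5}^*\cdot\mathrm{Stab}(U_{f_w})\;\cup\;\mathbb{F}_{q^5}^*\cdot\psi\cdot\mathrm{Stab}(U_{f_w}),
\]
where $\psi=\begin{pmatrix}1&w+1\\0&1\end{pmatrix}$ is the involution that swaps $U_{f_w}$ and $U_{\hat f_w}$. A direct computation shows $\psi^2=I$ and that $\psi$ commutes with every element of $\mathrm{Stab}(U_{f_w})$, both the matrix part and $\Phi^2$ (the latter by $(w+1)^4=w+1$). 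Normalizing representatives modulo scalars to $\begin{pmatrix}1&b\\0&1\end{pmatrix}$, the admissible $b$'s form the disjoint union $S\cup(S+(w+1))$ of two cosets of $S:=\{b\colon Q(b)=0\}$ with $Q(b):=b^{q^2}+(w+1)b^q+b$, disjointness following from $Q(w+1)=w\ne 0$. The single polynomial condition
\[
b^{2q^2}+wb^{q^2}+wb^{2q}+b^q+b^2+wb=Q(b)\bigl(Q(b)+w\bigr)
\]
(verified by squaring $Q$ in characteristic $2$ and using $(w+1)^2=w$) cuts out exactly $S\cup(S+(w+1))$, producing the matrix part $E_{2q^2}$ (abelian of exponent two). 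Together with the unchanged Frobenius part $\langle\Phi^2\rangle\cong C_{5s}$, this yields $E_{2q^2}\rtimes C_{5s}$. The main technical obstacle throughout is the bookkeeping of the coefficient comparisons in the second and third paragraphs; once performed, the remaining steps reduce to short algebraic verifications.
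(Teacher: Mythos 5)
Your proposal is correct and follows essentially the same route as the paper: reduce to upper-triangular maps via the unique weight-$3$ point, compare coefficients to get $a=d\in\fq^*$ and $b^{q^2}+(w+1)b^q+b=0$, rule out odd Frobenius powers via $w\mapsto w+1$, and pass to the $L_{f_w}$-stabilizer through Proposition \ref{prop:simple} and the involution $\begin{pmatrix}1&w+1\\0&1\end{pmatrix}$. Your two added verifications (the divisibility $\tau^2+w^2\tau+1\mid \tau^5-1$ giving exactly $q^2$ roots, and the factorization $b^{2q^2}+wb^{q^2}+wb^{2q}+b^q+b^2+wb=Q(b)(Q(b)+w)$) are correct and make explicit two steps the paper only asserts.
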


\begin{proof}
Let $a,b,c,d\in\mathbb{F}_{q^5}$ with $ad-bc\ne0$ and $\sigma\in{\rm Aut}(\mathbb{F}_{q^5})$ be such that
$\begin{pmatrix} a & b\\ c & d \end{pmatrix}\cdot U_{f_w}^{\sigma}=U_{f_w}$.
This happens if and only if for each $x\in \mathbb{F}_{q^5}$ there exists $z \in \mathbb{F}_{q^5}$ such that
\begin{equation}\label{eq:uguaglia}
\begin{pmatrix} a & b\\ c & d \end{pmatrix} \begin{pmatrix} x^{\sigma} \\ x^{\sigma q^4}+w^{\sigma}x^{\sigma q^3}+w^{\sigma}x^{\sigma q}+x^{\sigma} \end{pmatrix}=\begin{pmatrix} z \\ z^{q^4}+wz^{q^3}+wz^q+z \end{pmatrix}.
\end{equation}
As the point $\la(1,0)\ra_{\mathbb{F}_{q^5}}$ is the only point of weight $3$ in $L_{f_w}$, it follows that $c=0$.
Thus, \eqref{eq:uguaglia} is equivalent to
\begin{eqnarray*}
d(x^{\sigma q^4}+w^{\sigma}x^{\sigma q^3}+w^{\sigma}x^{\sigma q}+x^{\sigma})&=&a^{q^4}x^{\sigma q^4}+b^{q^4}(x^{\sigma q^3}+w^{\sigma}x^{\sigma q^2}+w^{\sigma}x^{\sigma}+x^{\sigma q^4})\\ 
&&+w[a^{q^3}x^{\sigma q^3}+b^{q^3}(x^{\sigma q^2}+w^{\sigma}x^{\sigma q}+w^{\sigma}x^{\sigma q^4}+x^{\sigma q^3})]\\
&&+w[a^qx^{\sigma q}+b^q(x^\sigma +w^\sigma x^{\sigma q^4}+w^\sigma x^{\sigma q^2}+x^{\sigma q})]\\
&&+ax^\sigma+b(x^{\sigma q^4}+w^{\sigma}x^{\sigma q^3}+w^\sigma x^{\sigma q}+x^{\sigma})\end{eqnarray*}
for every $x\in\mathbb{F}_{q^5}$. So
\begin{equation}\label{eq:system}
\left\{
\begin{array}{lllll} 
d=w b^{q^4}+w b^q+a+b,\\
wd= w^2 b^{q^3}+w a^q+ w b^q+wb,\\
0=w b^{q^4}+wb^{q^3}+w^2b^q,\\
wd=b^{q^4}+wa^{q^3}+wb^{q^3}+wb,\\
d=a^{q^4}+b^{q^4}+w^2 b^{q^3}+w^2 b^q+b
\end{array}
\right.
\end{equation}
if $w^{\sigma}=w$, and
\begin{equation}\label{eq:system3}
\left\{
\begin{array}{lllll} 
d=a+(w+1)b^{q^4}+wb^q+b,\\
(w+1)d= wa^q+b^{q^3}+wb^q+(w+1)b,\\
0=(w+1)b^{q^4}+wb^{q^3} + b^{q},\\
(w+1)d=w a^{q^3}+b^{q^4}+w b^{q^3} + (w+1)b,\\
d=a^{q^4}+b^{q^4}+ b^{q^3}+ b^q+b
\end{array}
\right.
\end{equation}
if $w^{\sigma}=w+1$.

Consider first System \eqref{eq:system}.
It is easy to see that $d=a$ and $a \in \F_q^*$. Furthermore, $b$ satisfies the following system:
\begin{equation}\label{eq:system2}
\left\{
\begin{array}{lllll} 
w b^{q^4}+w b^q+b = 0,\\
b^{q^4}+wb^{q^3}+wb = 0,\\
w b^{q^3} + b^q + b = 0.
\end{array}
\right.
\end{equation}
From the second and third equation in \eqref{eq:system2} we get
\begin{equation}\label{eq:cond2}
    b^{q^2}= (w+1)b^q+b.
\end{equation}
By direct computation using $w^2=w+1$, all the $q^2$ distinct solutions of \eqref{eq:cond2} are in $\mathbb{F}_{q^5}$ and every $b$ satisfying \eqref{eq:cond2} satisfies also \eqref{eq:system2}.

Consider System \eqref{eq:system3}.
Similar arguments show that $a=d=0$, a contradiction.

Therefore, ${\rm \Gamma L}(2,q^5)_{U_{f_w}}$ equals
\[ \left\{ \begin{pmatrix} a & b\\ 0 & a \end{pmatrix} \colon a \in \fq^*,\,b^{q^2}+(w+1)b^q+b=0 \right\}\rtimes\langle\Phi^2 \rangle \cong (E_{q^2}\times C_{q-1})\rtimes C_{5s}, \]
where $\Phi:(x,y)\mapsto (x^2,y^2)$.
As already noticed in the proof of Proposition \ref{prop:simple}, the involution $\tau=\begin{pmatrix} 1 & w+1 \\ 0 & 1 \end{pmatrix}$ maps $U_{f_w}$ to $U_{\hat{f}_w}$.
Since $\tau$ commutes with every element of ${\rm \Gamma L}(2,q^5)_{U_{f_w}}$ and maps $U_{f_w}$ to $U_{\hat{f}_w}$,
we have that ${\rm \Gamma L}(2,q^5)_{U_{f_w}}$ stabilizes $U_{\hat{f}_w}$.
Thus, the setwise stabilizer ${\rm \Gamma L}(2,q^5)_{\{U_{f_w},U_{\hat{f}_w}\}}$ is
\[
{\rm \Gamma L}(2,q^5)_{\{U_{f_w},U_{\hat{f}_w}\}}=\langle {\rm \Gamma L}(2,q^5)_{U_{f_w}},\tau\rangle =
{\rm \Gamma L}(2,q^5)_{U_{f_w}}\times\langle\tau\rangle =
\]
\[
\left\{ \begin{pmatrix} a & b\\ 0 & a \end{pmatrix}, \begin{pmatrix} a & b+w+1 \\ 0 & a \end{pmatrix} \colon a \in \fq^*,\,b^{q^2}+(w+1)b^q+b=0 \right\}\rtimes\langle\Phi^2 \rangle,
\]
which results to be equal to
\[ \left\{ \begin{pmatrix} a & b\\ 0 & a \end{pmatrix} \colon a \in \fq^*,\,b^{2q^2}+wb^{q^2}+wb^{2q}+b^q+b^2+wb=0 \right\}\rtimes\langle\Phi^2 \rangle . \]

Let $\psi\in{\rm \Gamma L}(2,q^5)$ be such that the projectivity $\Psi\in{\rm P\Gamma L}(2,q^5)$ induced by $\psi$ stabilizes $L_{f_w}$, and let $U_g=\psi(U_{f_w})$.
By Proposition \ref{prop:simple}, there exists $\lambda\in\mathbb{F}_{q^5}^*$ such that either $\delta\psi$ stabilizes $U_{f_w}$ or $\delta\psi$ maps $U_{f_w}$ to $U_{\hat{f}_w}$, where $\delta$ is the scalar matrix with $\lambda$ on the diagonal.
Since $\delta\psi$ induces $\Psi$ in ${\rm P\Gamma L}(2,q^5)$, this implies that ${\rm P\Gamma L}(2,q^5)_{L_{f_w}}$ is induced by ${\rm \Gamma L}(2,q^5)_{\{U_{f_w},U_{\hat{f}_w}\}}$.
The claim follows.
\end{proof}

\begin{remark}
For $q=4$, MAGMA computation shows that $L_{f_w}$ is ${\rm P\Gamma L}$-equivalent to the linear set $L_{\lambda}$ in \eqref{eq:exKMnopol} for a certain $\lambda\in\mathbb{F}_{4^5}^*$.
We do not know whether this is true also for $q>4$.
However, the polynomial $f_w$ is a new example of linearized polynomial which defines a $3$-club, see Problem 4 in \cite[Section 6]{CsMPq5}.
\end{remark}

\begin{remark}
Using Proposition \ref{prop:adjoint} (see also \cite[Remark 3.7]{ZiniZulloInt}), it turns out that $L_{f_w,i}$ is ${\rm P\Gamma L}$-equivalent to $L_{f_w}$ for any $i\in\{1,3,4\}$.
\end{remark}

\section{LP-polynomials}\label{sec:LP}

Through this section, consider the two polynomials $f(x)=x+\delta x^{q^{2s}}$ of index $s$ and $g(x)=x^{q^{s(n-1)}}+\delta x^{q^s}$ of index $0$  in $\fqn[x]$, where $s$ satisfies $\gcd(s,n)=1$. The polynomials $f(x)$ and $g(x)$ will be called \emph{LP-polynomials} after Lunardon and Polverino who first introduced in \cite{LunPol2000} the $\fq$-linear set $L_{f,s}=L_{g,0}$ of $\PG(1,q^n)$.
Note that the polynomials $f(x)$ and $g(x)$ are equivalent; also, it is easily seen that 
\begin{equation}\label{eq:fg} 
\dim_{\F_q} \ker (f(x)-mx^{q^s})=\dim_{\fq}\ker(g(x)-mx). 
\end{equation}
Let $r\geq0$ be such that $f$ is an $r$-fat polynomial of index $s$ over $\fqn$.
Equation \eqref{eq:fg} implies that $g$ is an $r$-fat polynomial of index $0$ over $\fqn$.

In this section we determine $r$, depending on the choice of $\delta$.



\begin{proposition}{\rm (see \cite[Theorem 5]{Gow})}
The maximum weight of $f(x)$ and $g(x)$ is at most two, for any $\delta\in\fqn$.
\end{proposition}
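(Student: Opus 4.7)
The plan is to reduce to bounding $\dim_{\fq}\ker(g(x)-mx)$ for arbitrary $m \in \fqn$, which coincides with $\dim_{\fq}\ker(f(x)-mx^{q^s})$ by the identity displayed just before the statement. Raising the equation $g(x) - mx = 0$, i.e.\ $x^{q^{s(n-1)}} + \delta x^{q^s} - mx = 0$, to the $q^s$-th power and using $x^{q^{sn}} = x$ on $\fqn$, I would obtain the equivalent equation
\[
h(x) := x - m^{q^s}\,x^{q^s} + \delta^{q^s}\,x^{q^{2s}} = 0,
\]
so $\ker(g - m\cdot\mathrm{id}) = \ker h$ inside $\fqn$.

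I would then invoke an Ore-type bound on the kernel of an operator written as a skew polynomial. Viewing $h$ through the $\fq$-automorphism $\tau \colon x \mapsto x^{q^s}$ (which generates $\mathrm{Gal}(\fqn/\fq)$ because $\gcd(s,n)=1$), the expression $h = 1 - m^{q^s}\tau + \delta^{q^s}\tau^2$ is a $\tau$-polynomial of degree $2$ (when $\delta \neq 0$). A standard fact---essentially the Dickson-matrix rank bound of \cite[Theorem 1.3]{qres}, or Ore's theorem in the skew polynomial ring $\fqn[\tau;\sigma]$ with $\sigma\colon\lambda\mapsto\lambda^{q^s}$---states that the kernel of such an operator, considered as a subspace over the fixed field of $\sigma$, has dimension at most the $\tau$-degree. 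Since $\gcd(s,n)=1$ the fixed field equals $\fq$, so $\dim_{\fq}\ker h \leq 2$. The case $\delta=0$ reduces $h$ to $\tau$-degree $1$ and the bound $\leq 1$ is immediate. Alternatively, one could argue directly: three $\fq$-independent roots $x_1, x_2, x_3$ of $h$ would put the nonzero vector $(1,-m^{q^s},\delta^{q^s})^\top$ in the right kernel of the generalised Moore matrix $(x_i^{q^{s(j-1)}})_{i,j=1}^3$, forcing a vanishing which contradicts $\fq$-independence.

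The point to be careful about is that the naive bound ``$\dim_{\fq}\ker L \leq q\text{-deg}(L)$'' applied directly to $g(x)-mx$ gives only the useless estimate $s(n-1)$. The $q^s$-th power manipulation is what converts a $q^s$-trinomial of $\tau$-degree $n-1$ into one of $\tau$-degree $2$, after which the Ore/Dickson-matrix bound becomes sharp enough. The hypothesis $\gcd(s,n)=1$ enters precisely here, ensuring that $\tau$ remains a Galois generator with fixed field $\fq$, so that the Ore bound applies with the right scalar field.
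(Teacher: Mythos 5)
Your proposal is correct, and it is essentially the same argument the paper relies on: the paper gives no proof of its own but cites \cite[Theorem 5]{Gow}, whose content is exactly the bound you invoke, namely that a nonzero $\sigma$-polynomial of $\sigma$-degree $d$ over $\fqn$, with $\sigma\colon x\mapsto x^{q^s}$ generating $\mathrm{Gal}(\fqn/\fq)$ because $\gcd(s,n)=1$, has kernel of $\fq$-dimension at most $d$ (equivalently, the Moore/Dickson-matrix rank argument you sketch). Your $q^s$-th-power reduction turning $g(x)-mx$ into a $\sigma$-trinomial of $\sigma$-degree $2$ is the right and necessary preliminary step, and for $f(x)-mx^{q^s}$ the operator is already of $\sigma$-degree $2$, so the bound applies directly.
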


After a series of papers, Zanella in \cite{Zanella} characterized those $\delta \in \fqn$ for which $f(x)$ (and hence $g(x)$) is a $0$-fat polynomial.

\begin{theorem}{\rm \cite[Theorem 3.4]{Zanella}}\label{th:Zanella}
Let $\delta\in\mathbb{F}_{q^n}$, $\gcd(s,n)=1$, $f(x)=x+\delta x^{q^{2s}}\in\mathbb{F}_{q^n}[x]$.
The polynomial $f$ is $0$-fat if and only if $\N_{q^n/q}(\delta)\ne 1$.
\end{theorem}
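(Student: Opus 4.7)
The plan is to characterize non-scatteredness of $f$ via Hilbert's Theorem~90 applied to the cyclic extension $\fqn/\fq$, whose Galois group is generated by $x\mapsto x^{q^s}$ precisely because $\gcd(s,n)=1$. Throughout, I set $D(x_0,x_1):=x_0 x_1^{q^s}-x_0^{q^s}x_1\in\fqn$ for $x_0,x_1\in\fqn$, and use the equivalence
\[
\dim_{\fq}\ker(f(x)-m x^{q^s})\geq 2 \iff \exists\, \fq\text{-linearly independent } x_0,x_1\in\fqn \text{ with } f(x_i)=mx_i^{q^s}.
\]

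For the forward implication, I would argue by contraposition. Suppose $f$ is not $0$-fat and let $x_0,x_1$ be $\fq$-linearly independent with common value $m=f(x_0)/x_0^{q^s}=f(x_1)/x_1^{q^s}$. Multiplying the first relation by $x_1^{q^s}$ and the second by $x_0^{q^s}$ and subtracting eliminates $m$ to give
\[
D(x_0,x_1)=\delta\cdot D(x_0,x_1)^{q^s}.
\]
Since the fixed field of $x\mapsto x^{q^s}$ inside $\fqn$ is $\fq$ (as $\gcd(s,n)=1$), the $\fq$-linear independence of $x_0,x_1$ forces $D(x_0,x_1)\neq 0$. Hence $\delta=D/D^{q^s}$, and applying the Galois-invariant norm $\N_{q^n/q}$ yields $\N_{q^n/q}(\delta)=1$.

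For the reverse implication, assume $\N_{q^n/q}(\delta)=1$. Hilbert's Theorem~90 for $\fqn/\fq$ with generator $x\mapsto x^{q^s}$ furnishes $D_0\in\fqn^*$ with $\delta=D_0/D_0^{q^s}$. It suffices to exhibit $\fq$-linearly independent $x_0,x_1\in\fqn$ and $\lambda\in\fq^*$ such that $D(x_0,x_1)=\lambda D_0$: since $\lambda^{q^s}=\lambda$, the ratio $D(x_0,x_1)/D(x_0,x_1)^{q^s}$ equals $D_0/D_0^{q^s}=\delta$, and reversing the forward elimination yields $f(x_0)x_1^{q^s}=f(x_1)x_0^{q^s}$, so the common value $m:=f(x_0)/x_0^{q^s}$ puts both $x_0,x_1\in\ker(f(x)-m x^{q^s})$.

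The realization step is the main technical obstacle. Writing $x_0=z x_1$ with $z\in\fqn\setminus\fq$ yields $D(x_0,x_1)=x_1^{1+q^s}(z-z^{q^s})$, and the image of the $\fq$-linear map $z\mapsto z-z^{q^s}$ coincides with the trace-zero hyperplane $\ker(\Tr_{q^n/q})$ by dimension count and the invariance of the trace under $x\mapsto x^{q^s}$. Hence the problem reduces to finding $x_1\in\fqn^*$ with $\Tr_{q^n/q}(D_0/x_1^{1+q^s})=0$ (the scalar $\lambda$ being immaterial since $\Tr_{q^n/q}$ is $\fq$-linear). I would attack this by viewing
\[
Q(y):=\Tr_{q^n/q}\bigl(D_0\cdot y^{1+q^s}\bigr)
\]
as a quadratic form on $\fqn$ regarded as an $n$-dimensional $\fq$-vector space: a nonzero isotropic vector $y$ of $Q$ furnishes $x_1:=1/y$ as required. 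For $n\geq 3$, the existence of a nonzero isotropic vector follows from Chevalley--Warning, since every quadratic form in at least three variables over a finite field is isotropic, and this closes the argument in the range of interest.
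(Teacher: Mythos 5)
Your argument is correct, and it takes a genuinely different route from the one the paper relies on. The paper does not prove this statement itself: it imports it from Zanella, whose proof runs through Dickson matrices and $q$-subresultants --- the weight of a point is detected by the vanishing of the $(n-1)$-th North-West principal minor $M_{n-1}=(-1)^{n+1}\sum_{i}z^{(q^{si}-1)/(q^s-1)}$, and one then analyses when that sum has roots $z$ of the required shape (this is exactly the computation the paper reproduces in the proof of Proposition \ref{th:correspondence}). Your proof replaces that machinery with the two forms of Hilbert's Theorem 90 for the generator $x\mapsto x^{q^s}$ of $\mathrm{Gal}(\fqn/\fq)$: the cross-multiplication eliminating $m$ gives $D=\delta D^{q^s}$ with $D=x_0x_1^{q^s}-x_0^{q^s}x_1\ne0$, the multiplicative version forces $\mathrm{N}_{q^n/q}(\delta)=1$, and the additive version (image of $z\mapsto z-z^{q^s}$ equals $\ker\mathrm{Tr}_{q^n/q}$) reduces the converse to finding a nonzero isotropic vector of the quadratic form $Q(y)=\mathrm{Tr}_{q^n/q}(D_0y^{q^s+1})$, which Chevalley--Warning supplies for $n\ge3$. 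This is more elementary and self-contained; what it gives up is precision --- the paper needs the Dickson-matrix/quadratic-form formalism anyway because Section \ref{sec:LP} goes on to \emph{count} the isotropic vectors of essentially the same form $\mathrm{Tr}_{q^n/q}(du^{q^s+1})$ (your $D_0$ is, up to normalization, the $1/d$ with $\delta=d^{q^s-1}$ appearing in Proposition \ref{th:correspondence}) in order to determine $r$ exactly, whereas mere existence is all your argument extracts. Two small points worth flagging: your equivalence between ``common value $m$'' and ``two independent kernel elements'' silently uses that $\ker(f(x)-mx^{q^s})$ is an $\fq$-subspace, which is fine; and the restriction to $n\ge3$ that Chevalley--Warning imposes is harmless here, since LP-polynomials are only considered for $n\ge3$ (indeed $n\ge4$ in the original setting), but you should state the hypothesis explicitly rather than leave it at ``the range of interest.''
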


Next result summarizes what is known for LP-polynomials when $\mathrm{N}_{q^n/q}(\delta)=1$.

\begin{theorem}
Let $\delta\in\mathbb{F}_{q^n}$, $\gcd(s,n)=1$, $f(x)=x+\delta x^{q^{2s}}\in\mathbb{F}_{q^n}[x]$.

\begin{itemize}
    \item If $n$ and $q$ are both odd and $\N_{q^n/q}(\delta)=1$, then $f$ is a $\frac{q^{n-1}-1}{q^2-1}$-fat polynomial; see \cite[Lemma 4.4]{LMPT2015}.
    \item If $n=4$ and $\N_{q^n/q}(\delta)=1$, then $f$ is an $r$-fat polynomial with
    \[ r=\left\{ \begin{array}{ll} 1 & \mbox{if}\quad \N_{q^4/q^2}(\delta)=1,\\ q+1 & \mbox{if}\quad\N_{q^4/q^2}(\delta)\ne 1; \end{array}\right.\]
    see \cite[Theorem 2.1]{CsZ2018}.
\end{itemize}
\end{theorem}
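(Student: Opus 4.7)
By the identity \eqref{eq:fg} the weight distribution of $L_{f,s}$ coincides with that of $L_{g,0}$, so I may argue for either polynomial interchangeably. By the preceding proposition (Gow's bound), every point of $L_{f,s}$ has weight at most $2$, so the whole task reduces to computing the single integer $N_{2}$: indeed $r = N_{2}$, and $N_1$ is then forced by
\[
N_1(q-1) + N_2(q^2-1) = q^n - 1.
\]

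\emph{Item 1.} Here I would follow the strategy of \cite[Lemma~4.4]{LMPT2015}. The condition $\dim_{\fq}\ker(g(x) - mx) = 2$ is equivalent to the simultaneous vanishing of suitable North-East minors of the Dickson matrix associated with $g(x) - mx$, exactly as in the proof of Theorem \ref{th:3club}. This yields a system of polynomial equations in $m$ and its $q$-power conjugates. Using that both $n$ and $q$ are odd, together with the hypothesis $\N_{q^n/q}(\delta) = 1$, the system collapses: the $q$-power relations decouple and reduce to a single algebraic condition on $m$. I would then verify that the resulting equation cuts out in $\fqn$ a set of cardinality exactly $\frac{q^{n-1}-1}{q^2-1}$, which is the claimed value of $r$.

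\emph{Item 2.} For $n=4$ I would exploit the classification already carried out in this paper, namely Corollary \ref{cor:PG1q4}. Theorem \ref{th:Zanella} together with $\N_{q^4/q}(\delta)=1$ rules out case (i) of Corollary \ref{cor:PG1q4} (the scattered case), while Gow's bound rules out the weight-$3$ sub-case of case (ii). The surviving possibilities for the weight distribution are the four pairs
\[
(N_2, N_1) \in \{(q^2+1,\,0),\; (q+1,\, q^3-q),\; (1,\, q^3+q^2),\; (2,\, q^3+q^2-q-1)\},
\]
coming from cases (o), (ii), (iii), (iv) respectively. To pick the correct one I would invoke the explicit cardinality computation of \cite[Theorem~2.1]{CsZ2018}, which shows that $|L_{f,s}| = q^3+q^2+1$ exactly when $\N_{q^4/q^2}(\delta) = 1$ (case (iii), giving $r = 1$) and $|L_{f,s}| = q^3+1$ otherwise (case (ii), giving $r = q+1$). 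The remaining cases (o) and (iv) are thereby ruled out for the LP-family under the standing norm hypothesis.

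\emph{Expected obstacle.} The delicate step is the $q$-linear algebra of Item~1: showing that under $\N_{q^n/q}(\delta)=1$ the Dickson-minor equations genuinely collapse to one equation of the correct degree with exactly $\frac{q^{n-1}-1}{q^2-1}$ solutions. Item~2 is essentially bookkeeping on top of Corollary \ref{cor:PG1q4} once the explicit norm computation distinguishing the two sub-cases is in place.
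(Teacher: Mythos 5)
You should first be aware that the paper does not actually prove this statement: it is explicitly presented as a summary of known results and is ``proved'' by citation to \cite[Lemma 4.4]{LMPT2015} and \cite[Theorem 2.1]{CsZ2018}; the paper's own self-contained arguments come only afterwards (Proposition \ref{prop:nodd} for odd $n$, and the quadratic-form analysis of Section \ref{sec:LP} for even $n$). Your reduction to computing the single integer $N_2$ via Gow's bound and \eqref{eq:fg} is correct, and your Item 2 is a sound alternative route: combining the rank-$4$ classification of Corollary \ref{cor:PG1q4} with the cardinality computation of \cite[Theorem 2.1]{CsZ2018} does pin down the weight distribution, and is no more reliant on external input than the paper itself.

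The genuine gap is in Item 1. Everything of substance there --- ``the system collapses'', ``I would then verify that the resulting equation cuts out a set of cardinality exactly $\frac{q^{n-1}-1}{q^2-1}$'' --- is precisely the assertion to be proved, and you give no argument for it. Moreover, the Dickson-minor route you sketch is not the one used in \cite[Lemma 4.4]{LMPT2015}, nor in the paper's own generalization: in Proposition \ref{prop:nodd} one shows that $\langle(x_0,g(x_0))\rangle_{\fqn}$ has weight two if and only if there is $\lambda_0\in\fqn\setminus\fq$ with $x_0^{q^{2s}-1}=1/(\delta^{q^s}(\lambda_0-\lambda_0^{q^s})^{q^s-1})$, and the count follows because for $n$ odd one has $\gcd(q^{2s}-1,q^n-1)=q-1$ while the right-hand side has norm $1$ precisely because $\N_{q^n/q}(\delta)=1$; hence the equation in $x_0$ always has exactly $q-1$ solutions, giving $(q-1)(q^n-q)$ pairs, thus $q^{n-1}-1$ values of $x_0$ and $\frac{q^{n-1}-1}{q^2-1}$ points of weight two. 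Without an argument of this kind (or an honest appeal to the \emph{statement}, not merely the ``strategy'', of \cite[Lemma 4.4]{LMPT2015}), Item 1 remains a plan rather than a proof. Note also that the parity of $q$ plays no role in that counting argument --- indeed the paper's Proposition \ref{prop:nodd} removes the hypothesis that $q$ be odd --- so if your proposed Dickson-minor computation genuinely required $q$ odd to ``decouple'', that would be a warning sign that the computation had gone astray.
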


Following the proof of \cite[Lemma 4.4]{LMPT2015} we determine $r$ for any $q$ when $n$ is odd.

\begin{proposition}\label{prop:nodd}
Let $n$ be an odd integer, $\delta\in\mathbb{F}_{q^n}$, $\gcd(s,n)=1$, $f(x)=x+\delta x^{q^{2s}}\in\mathbb{F}_{q^n}[x]$.
If $\N_{q^n/q}(\delta)=1$ then $f$ is an $\frac{q^{n-1}-1}{q^2-1}$-fat polynomial.
\end{proposition}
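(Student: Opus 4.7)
The plan is to determine $N_2$ by a two–way count. Set $N(m):=\dim_{\fq}\ker(f(x)-mx^{q^s})$ and $N_i:=\#\{m\in\fqn:N(m)=i\}$; by the preceding proposition $N_i=0$ for $i\ge 3$, and the elementary count of pairs $(x,m)\in\fqn\times\fqn$ with $f(x)=mx^{q^s}$ yields
\begin{equation*}
(q-1)N_1+(q^2-1)N_2=q^n-1.
\end{equation*}
A second equation is needed. For this I would study the ``slope map''
\begin{equation*}
\phi\colon\fqn^{*}\to\fqn,\qquad \phi(x)=\frac{f(x)}{x^{q^s}}=x^{1-q^s}+\delta x^{q^{2s}-q^s},
\end{equation*}
and note that $N(m)=2$ precisely when $\phi^{-1}(m)$ contains a pair $(x,\lambda x)$ with $\lambda\in\fqn\setminus\fq$.

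The central step is to count the pairs $(x,\lambda)\in\fqn^{*}\times(\fqn^{*}\setminus\fq^{*})$ with $\phi(\lambda x)=\phi(x)$ in two ways. Geometrically, each weight-$2$ fiber $\phi^{-1}(m)\cup\{0\}$ is a $2$-dimensional $\fq$-subspace, so it contributes exactly $(q^2-1)(q^2-q)$ such pairs, while weight-$1$ fibers contribute none; hence the total equals $N_2(q^2-1)(q^2-q)$. Algebraically, clearing denominators in $\phi(\lambda x)=\phi(x)$ and substituting $u:=\lambda^{q^s-1}$ reduce it to the one-parameter equation
\begin{equation*}
x^{q^{2s}-1}=c(u),\qquad c(u):=\frac{u-1}{\delta\,u\,(u^{q^s}-1)}.
\end{equation*}
The map $\lambda\mapsto u$ is a $(q-1)$-to-$1$ homomorphism of $\fqn^{*}$ onto $\ker\N_{q^n/q}$, and $\lambda\notin\fq^{*}$ translates exactly to $u\ne 1$.

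The hypothesis ``$n$ odd'' enters only through $\gcd(2s,n)=1$, giving $\gcd(q^{2s}-1,q^n-1)=q-1$; consequently $x^{q^{2s}-1}=c(u)$ has either $q-1$ or $0$ solutions, the former precisely when $\N_{q^n/q}(c(u))=1$. The decisive identity is
\begin{equation*}
\N_{q^n/q}(c(u))=\frac{\N(u-1)}{\N(\delta)\,\N(u)\,\N(u^{q^s}-1)}=1,
\end{equation*}
which uses the hypothesis $\N(\delta)=1$, the fact that $\N(u)=1$ for every $u\in\ker\N$, and the identity $\N(u^{q^s}-1)=\N(u-1)$ that follows because $a\mapsto a^{q^s}$ is a Galois automorphism fixing $\fq$.

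Putting everything together there are $(q^n-q)/(q-1)$ admissible values of $u\ne 1$, each paired with $q-1$ values of $\lambda$ and $q-1$ values of $x$, giving $(q-1)(q^n-q)$ pairs in total; equating with $N_2(q^2-1)(q^2-q)$ forces $N_2=(q^{n-1}-1)/(q^2-1)$, as claimed. I expect the hardest step to be spotting the substitution $u=\lambda^{q^s-1}$ that turns the bivariate identity $\phi(\lambda x)=\phi(x)$ into a univariate monomial equation of controlled solvability, and recognising that $\N(c(u))$ collapses to $1$ under the hypothesis on $\delta$; this is precisely where the argument of \cite{LMPT2015}, which exploited a discriminant computation valid only for odd $q$, is replaced by a characteristic-free norm argument extending the result to every $q$.
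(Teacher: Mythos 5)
Your argument is correct and is essentially the paper's own proof: both double-count the pairs $(x,\lambda)$ with $\lambda\in\fqn\setminus\fq$ satisfying the proportionality condition, reduce it to a monomial equation $x^{q^{2s}-1}=c$ which (since $\gcd(2s,n)=1$ forces $\gcd(q^{2s}-1,q^n-1)=q-1$ and $c$ has norm one) has exactly $q-1$ solutions for each admissible $\lambda$, arrive at $(q-1)(q^n-q)$ pairs, and divide by $(q^2-1)(q^2-q)$. The only differences are presentational: you make explicit, via the substitution $u=\lambda^{q^s-1}$ and the identity $\N_{q^n/q}(u^{q^s}-1)=\N_{q^n/q}(u-1)$, the solvability check that the paper compresses into ``as $n$ is odd and $\gcd(s,n)=1$'', and you justify the final division by the $2$-dimensional subspace structure of the weight-two fibres rather than by the paper's observation that $g(\lambda x)=\lambda g(x)$ and $g(\mu x)=\mu g(x)$ share a solution exactly when $\mu\in\langle 1,\lambda\rangle_{\fq}$.
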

\begin{proof}
It is enough to prove that the number of points having weight two in $L_{f,s}$ is $\frac{q^{n-1}-1}{q^2-1}$.
The point $\langle (x_0,x_0^{q^{s(n-1)}}+\delta x_0^{q^{s}})\rangle_{\fqn}$ has weight two in $L_{f,s}$ if and only there exists $\lambda_0 \in \fqn\setminus \fq$ such that $y=\lambda_0 x_0$ and 
\[ \lambda_0(x_0^{q^{s(n-1)}}+\delta x_0^{q^{s}})=(\lambda_0 x_0)^{q^{s(n-1)}}+\delta (\lambda_0 x_0)^{q^{s}}, \]
that is
\begin{equation}\label{eq:cond} x_0^{q^{2s}-1}=\frac{1}{\delta^{q^s}(\lambda_0-\lambda_0^{q^s})^{q^s-1}}. \end{equation}
It follows that the number of pairs $(\lambda,x)\in \fqn\setminus \fq \times \fqn^*$ corresponding to a point of weight two in $L_{f,s}$ is the size of the following set
\[ \Omega= \{ (\lambda,x)\in (\fqn\setminus \fq) \times \fqn^* \colon (\lambda,x)\,\, \mbox{satisfies}\,\, \eqref{eq:cond} \}. \]
For any $\overline{\lambda}\in\fqn\setminus\fq$, the number of $\overline{x}\in\fqn^*$ such that $(\overline{\lambda},\overline{x})\in\Omega$ is $q-1$, as $n$ is odd and $\gcd(s,n)=1$.
Therefore $|\Omega|=(q-1)(q^n-q)$.

Recall that $g(x)=x^{q^{s(n-1)}}+\delta x^{q^s}$.
Consider $\lambda,\mu \in \fqn\setminus \fq$. By direct computation, the two equations $g(\lambda x)=\lambda g(x)$ and $g(\mu x)=\mu g(x)$ have a common solution $x\in\mathbb{F}_{q^n}^*$ if and only if $\mu \in \langle 1,\lambda \rangle_{\fq}$, and in this case the two equations have the same solutions.
Thus, the number of $x\in\fqn^*$ such that the point $\langle(x,g(x))\rangle_{\fqn}$ has weight two in $L_{g,0}=L_{f,s}$ is
\[ \frac{|\Omega|}{q^2-q}=q^{n-1}-1,  \]
so that the number of points having weight two in $L_{f,s}$ is $\frac{q^{n-1}-1}{q^2-1}$.
\end{proof}

Let $n$ and $s$ be positive integers such that $\gcd(s,n)=1$ and $q$ be any a prime power.
Consider $F(x)=a_0x+a_1x^{q^s}+\ldots+a_{n-1}x^{q^{s(n-1)}}\in \mathbb{F}_{q^n}[x]$ and the Dickson matrix 
\[ D_{s,F(x)}=\left(\begin{matrix}
a_0 & a_1 & \cdots & a_{n-1} \\
a_{n-1}^{q^s} & a_0^{q^s} & \cdots & a_{n-2}^{q^s} \\
\vdots & & \vdots \\
a_1^{q^{s(n-1)}} & a_2^{q^{s(n-1)}} & \cdots & a_0^{q^{s(n-1)}}
\end{matrix}\right)\]
associated with $F$.
As a consequence of \cite[Proposition 2.2]{Zanella} and \cite{qres}, the following holds.

\begin{proposition}
Let $F(x)=\sum_{i=0}^{n-1} a_i x^{q^{si}}\in \mathbb{F}_{q^n}[x]$ and
\[ \varphi(x,t)=-F(x)t+F(xt)\in \mathbb{F}_{q^n}[x,t]. \]
For any $x_0 \in \fqn$, the following conditions are equivalent:
\begin{itemize}
    \item $w_{L_F}(\langle (x_0,F(x_0)) \rangle_{\mathbb{F}_{q^n}})\geq 2$;
    \item $\mathrm{rk}(D_{s,\varphi(x_0,t)})\leq n-2$;
    \item the $(n-1)$-th order North-West principal minor of $D_{s,\varphi(x_0,t)}$ is zero.
\end{itemize}
\end{proposition}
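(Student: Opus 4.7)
The plan is to translate the geometric weight condition (i) into a statement about the kernel of a $q^s$-linearized polynomial in the variable $t$, and then to apply two known results: the Dickson-matrix rank formula from \cite{qres} and the structural minor-vanishing criterion from \cite{Zanella}.

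First, I would record two observations about $\varphi(x,t)$. As a polynomial in $t$ one has
\[
\varphi(x_0,t)=-F(x_0)\,t+\sum_{i=0}^{n-1}a_i x_0^{q^{si}} t^{q^{si}},
\]
so $\varphi(x_0,\cdot)$ is a $q^s$-linearized polynomial over $\fqn$ whose Dickson matrix with respect to the index $s$ is precisely $D_{s,\varphi(x_0,t)}$. Moreover, by the $\fq$-linearity of $F$ one has $F(\lambda x_0)=\lambda F(x_0)$ for every $\lambda\in\fq$, so every element of $\fq$ is a zero of $\varphi(x_0,\cdot)$ in $\fqn$; in particular $\dim_{\fq}\ker\varphi(x_0,\cdot)|_{\fqn}\geq 1$ whenever $x_0\neq 0$.

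Second, I would restate (i) algebraically. The point $\langle(x_0,F(x_0))\rangle_{\fqn}$ has weight at least two in $L_F$ precisely when there exists $\lambda\in\fqn\setminus\fq$ with $(\lambda x_0,F(\lambda x_0))\in\langle(x_0,F(x_0))\rangle_{\fqn}$, i.e. $F(\lambda x_0)=\lambda F(x_0)$, which is exactly $\varphi(x_0,\lambda)=0$. Combined with the first observation, (i) is equivalent to
\[
\dim_{\fq}\ker\varphi(x_0,\cdot)|_{\fqn}\geq 2.
\]

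Third, I would appeal to the rank-nullity result for Dickson matrices established in \cite{qres}: for a $q^s$-linearized polynomial $P$ over $\fqn$ with $\gcd(s,n)=1$, one has $\mathrm{rk}(D_{s,P})=n-\dim_{\fq}\ker P|_{\fqn}$. Applied to $P=\varphi(x_0,\cdot)$, this yields the equivalence between (i) and (ii).

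Finally, the equivalence between (ii) and (iii) is where the special Frobenius-cyclic structure of Dickson matrices is used. A general $n\times n$ matrix of rank at most $n-2$ has \emph{all} its $(n-1)\times(n-1)$ minors vanishing, but \cite[Proposition 2.2]{Zanella} together with \cite{qres} shows that for Dickson matrices the vanishing of the single $(n-1)$-th order North-West principal minor already implies the vanishing of all $(n-1)$-minors (the other minors being obtained by suitable Frobenius twists of it). Citing this directly closes the chain (i)$\Leftrightarrow$(ii)$\Leftrightarrow$(iii). The only place where any genuine work is required beyond invocation of references is the careful identification, in step one, of $\varphi(x_0,\cdot)$ as the relevant $q^s$-polynomial whose Dickson matrix is exactly $D_{s,\varphi(x_0,t)}$, so that the cited results can be applied verbatim.
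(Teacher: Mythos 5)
The paper gives no proof of this proposition beyond the sentence ``As a consequence of [Zanella, Proposition 2.2] and [qres], the following holds'', so your strategy --- reduce (i) to a kernel condition on $\varphi(x_0,\cdot)$ and then invoke the Dickson-matrix machinery of those two references --- is exactly the intended one. Your treatment of (i)$\Leftrightarrow$(ii) is complete and correct: the point $\langle(x_0,F(x_0))\rangle_{\fqn}$ has weight $\dim_{\fq}\{\mu\in\fqn\colon F(\mu x_0)=\mu F(x_0)\}=\dim_{\fq}\ker\varphi(x_0,\cdot)$, and the rank--nullity statement for Dickson matrices (valid for $q^s$-polynomials since $\gcd(s,n)=1$) converts this into the rank condition.

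There is, however, a genuine flaw in your gloss of (ii)$\Leftrightarrow$(iii). You assert that for a Dickson matrix the vanishing of the North-West principal $(n-1)$-minor forces \emph{all} $(n-1)$-minors to vanish, ``the other minors being obtained by suitable Frobenius twists''. The Frobenius symmetry $D_{i+1,j+1}=D_{i,j}^{q^s}$ (indices mod $n$) only relates the $n$ \emph{diagonal} cofactors to one another; the off-diagonal cofactors lie in different conjugacy classes and are not controlled by the principal one. Worse, the implication (iii)$\Rightarrow$(ii) is simply false for a general Dickson matrix: the Dickson matrix of the monomial $x^{q^s}$ is invertible, yet its North-West principal $(n-1)$-minor is zero. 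What saves the statement here is precisely the observation you made in your first step and then never used again: $\fq\subseteq\ker\varphi(x_0,\cdot)$, hence $\det D_{s,\varphi(x_0,t)}=0$ and the rank is a priori at most $n-1$. Given this, if the rank were exactly $n-1$ the adjugate would be a nonzero rank-one Dickson matrix, i.e.\ the Dickson matrix of a map $\alpha\,\mathrm{Tr}_{q^n/q}(\beta x)$ with $\alpha\beta\neq0$, all of whose entries are nonzero; so the vanishing of any single cofactor --- in particular the North-West principal $(n-1)$-minor --- already forces the rank down to $n-2$. This is the actual content of the cited results of Zanella and Csajb\'ok, so your citation lands in the right place, but the mechanism you offer for it does not; you should replace the ``all $(n-1)$-minors vanish by Frobenius twists'' argument by the chain ``$\det=0$ because $\fq$ lies in the kernel, plus the rank-one-adjugate argument (or a direct appeal to [qres, Theorem~1.3])''.
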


Next corollary follows from the above proposition and the fact that the weight of a point in $L_{f,s}$ is at most two.

\begin{corollary}
Let $g(x)=x^{q^{s(n-1)}}+\delta x^{q^s}$ and $\varphi(x,t)=-g(x)t+g(xt)$, with $\N_{q^n/q}(\delta)=1$.
For any $x_0 \in \fqn$, the following conditions are equivalent:
\begin{itemize}
    \item $w_{L_g}(\langle (x_0,g(x_0)) \rangle_{\mathbb{F}_{q^n}})= 2$;
    \item $\mathrm{rk}(D_{s,\varphi(x_0,t)})= n-2$;
    \item the $(n-1)$-th order North-West principal minor of $D_{s,\varphi(x_0,t)}$ is zero.
\end{itemize}
\end{corollary}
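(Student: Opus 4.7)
The plan is to deduce this corollary directly from the preceding proposition combined with the Gow-type bound on weights recalled at the beginning of Section~\ref{sec:LP}. Structurally, the corollary is simply the specialization of the previous "weight $\geq 2$" equivalence to the sharper regime $\mathrm{N}_{q^n/q}(\delta)=1$, where weight $2$ can actually be attained.

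First, I would invoke the preceding proposition, which already gives
\[
w_{L_g}(\langle(x_0,g(x_0))\rangle_{\mathbb{F}_{q^n}})\geq 2 \;\Longleftrightarrow\; \mathrm{rk}(D_{s,\varphi(x_0,t)})\leq n-2 \;\Longleftrightarrow\; \text{the }(n-1)\text{-th NW minor of }D_{s,\varphi(x_0,t)}\text{ vanishes}.
\]
Hence all that is needed is to upgrade the inequalities in the first two clauses to equalities. The third clause appears already in the desired final form and needs no adjustment.

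Second, for the weight clause, I would cite the Gow-type proposition recalled earlier in Section~\ref{sec:LP}, which asserts that every point of $L_g=L_{f,s}$ has weight at most $2$ (for any $\delta\in\fqn$). Therefore, for any $x_0\in\fqn$, the condition $w_{L_g}(\langle(x_0,g(x_0))\rangle_{\mathbb{F}_{q^n}})\geq 2$ is the same as $w_{L_g}(\langle(x_0,g(x_0))\rangle_{\mathbb{F}_{q^n}})=2$.

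Third, for the rank clause, I would argue by contradiction: if $\mathrm{rk}(D_{s,\varphi(x_0,t)})\leq n-3$, then by the Dickson-matrix criterion of \cite{qres} used inside the preceding proposition, the kernel of the linearized polynomial associated to $\varphi(x_0,t)$ would have $\F_q$-dimension at least $3$, which via the same correspondence forces the weight of $\langle(x_0,g(x_0))\rangle_{\mathbb{F}_{q^n}}$ to be at least $3$, contradicting the Gow bound. Hence $\mathrm{rk}(D_{s,\varphi(x_0,t)})\leq n-2$ actually forces $\mathrm{rk}(D_{s,\varphi(x_0,t)})= n-2$, and combining the three steps gives the stated equivalence. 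I do not foresee a genuine obstacle here: the only non-formal ingredient is the upper bound on the weight, which has already been established.
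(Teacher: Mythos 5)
Your proposal is correct and matches the paper's own (one-line) argument: the paper likewise derives the corollary from the preceding proposition together with the Gow-type bound that every point of $L_{f,s}=L_{g,0}$ has weight at most two, which upgrades the inequalities $w\geq 2$ and $\mathrm{rk}\leq n-2$ to equalities. Your extra remark that $\mathrm{rk}(D_{s,\varphi(x_0,t)})\leq n-3$ would force weight at least $3$ via the kernel-dimension correspondence of \cite{qres} just makes explicit what the paper leaves implicit.
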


As in \cite[Propositions 3.2 and 3.3, Theorem 3.4]{Zanella}, in order to determine the value of $r$, we find the number of points of weight two in $L_{f,s}$ in terms of the number of isotropic vectors of a quadratic form.

\begin{proposition}\label{th:correspondence}
Let $n$ be even and $f(x)=x+\delta x^{q^{2s}}\in\fqn[x]$ with $\N_{q^n/q}(\delta)=1$. 
Then the number of points of weight two in $L_{f,s}$ coincides with the number of nonzero elements $u \in \fqn^*$ such that
\[ \mathrm{Tr}_{q^n/q}(d u^{q^s+1})=0 \]
divided by $q^2-1$, where $d\in \fqn^*$ satisfies $\delta=d^{q^s-1}$.
\end{proposition}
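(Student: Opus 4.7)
The plan is to follow the strategy of Proposition \ref{prop:nodd}, combined with the substitution $\delta = d^{q^s-1}$ to reduce the weight-two condition to an additive trace condition. Since $L_{f,s} = L_{g,0}$ with $g(x) = x^{q^{s(n-1)}} + \delta x^{q^s}$, I would work with $g$ and the condition derived in the proof of Proposition \ref{prop:nodd}: a point $\langle (x_0, g(x_0))\rangle_{\fqn}$ has weight two in $L_{g,0}$ if and only if \eqref{eq:cond} admits a solution $\lambda \in \fqn \setminus \fq$, namely
\[
x_0^{q^{2s}-1} = \frac{1}{\delta^{q^s}\,(\lambda - \lambda^{q^s})^{q^s - 1}}.
\]

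The key step is to substitute $\delta = d^{q^s - 1}$, which gives $\delta^{q^s} = (d^{q^s})^{q^s - 1}$, so that the equation takes the perfect-power form
\[
\bigl(x_0^{q^s+1}\, d^{q^s}\, (\lambda - \lambda^{q^s})\bigr)^{q^s - 1} = 1.
\]
Since $\gcd(s, n) = 1$, the group of $(q^s - 1)$-th roots of unity in $\fqn$ is exactly $\fq^*$, so the condition becomes: there exists $c \in \fq^*$ with $\lambda - \lambda^{q^s} = c / (x_0^{q^s+1}\, d^{q^s})$. The $\fq$-linear map $\lambda \mapsto \lambda - \lambda^{q^s}$ on $\fqn$ has kernel $\fq$ and, by a dimension count (additive Hilbert 90), image equal to $\{y \in \fqn : \mathrm{Tr}_{q^n/q}(y) = 0\}$. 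Hence, for a given $x_0 \in \fqn^*$, a solution $\lambda$ (which is automatically outside $\fq$, since $c \ne 0$) exists if and only if the right-hand side has zero trace; as $c \in \fq^*$ factors out of the trace, this is equivalent to
\[
\mathrm{Tr}_{q^n/q}\!\left(\frac{1}{x_0^{q^s+1}\, d^{q^s}}\right) = 0.
\]

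To conclude, I would apply the bijective substitution $u = 1/(x_0 d)$ of $\fqn^*$ onto itself: direct computation gives $d\, u^{q^s+1} = 1/(x_0^{q^s+1}\, d^{q^s})$, so the previous condition reads $\mathrm{Tr}_{q^n/q}(d\, u^{q^s+1}) = 0$. Finally, writing $L_{g,0} = L_U$ with $U = \{(x, g(x)) : x \in \fqn\}$, a point of weight two corresponds to the $q^2 - 1$ nonzero vectors in the two-dimensional $\fq$-intersection $U \cap \langle (x_0, g(x_0))\rangle_{\fqn}$; dividing the count of valid $u$ by $q^2 - 1$ then yields the stated formula. The main obstacle — really the key observation — is the algebraic manipulation of the second step: identifying the perfect $(q^s-1)$-th-power structure after substituting $\delta = d^{q^s-1}$, which is what allows passage from a multiplicative equation in $\lambda$ to a linear one over $\fq$; once this is done, the remainder is a standard additive trace argument and a routine orbit count.
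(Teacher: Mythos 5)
Your proof is correct, but it follows a genuinely different route from the paper's. The paper proves the statement via Zanella's Dickson-matrix criterion: it characterizes weight-two points by the vanishing of the $(n-1)$-th North--West principal minor $M_{n-1}=(-1)^{n+1}\sum_{i=0}^{n-1}z^{\frac{q^{si}-1}{q^s-1}}$ with $z=\delta x^{q^{s(n-1)}-q^s}$, observes that any solution has $\N_{q^n/q}(z)=1$ so that $z=y^{q^s-1}$ with $\mathrm{Tr}_{q^n/q}(y)=0$ (multiplicative Hilbert 90), and then uses the parity of $n$ through $\gcd(q^{s(n-2)}-1,q^{sn}-1)=q^{2s}-1$ to reparametrize $x^{q^{s(n-1)}-q^s}=u^{q^{2s}-1}$. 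You instead start from the elementary weight-two condition \eqref{eq:cond} already derived in the proof of Proposition \ref{prop:nodd}, exploit the perfect $(q^s-1)$-th-power structure revealed by $\delta=d^{q^s-1}$ together with the fact that the $(q^s-1)$-th roots of unity in $\fqn$ are exactly $\fq^*$, convert the existence of $\lambda$ into a trace condition via additive Hilbert 90 (the image of $\lambda\mapsto\lambda-\lambda^{q^s}$ is $\ker\mathrm{Tr}_{q^n/q}$), and finish with the explicit bijection $u=1/(x_0d)$; all the computations you indicate check out, including $du^{q^s+1}=1/(x_0^{q^s+1}d^{q^s})$ and the final division by $q^2-1$, which correctly accounts for the $q^2-1$ nonzero vectors spanning each weight-two point. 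Your argument buys two things: it avoids importing Zanella's minor computation, and it nowhere uses the hypothesis that $n$ is even (the bijection $u=1/(x_0d)$ replaces the gcd argument that is the paper's only use of parity), so it in fact establishes the correspondence for all $n$ with $\gcd(s,n)=1$, consistently with the count $\frac{q^{n-1}-1}{q^2-1}$ of Proposition \ref{prop:nodd} in the odd case. The paper's approach, on the other hand, sets up the Dickson-matrix formalism that it reuses in the surrounding results.
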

\begin{proof}
It is enough to determine the values of $x\in\fqn$ such that $\varphi(x,t)=-g(x)t+g(xt)$ has kernel of $\fq$-dimension two, i.e. (by \cite[Theorem 1.3]{qres}) the $(n-1)$-th order North-West principal minor $M_{n-1}$ of $D_{s,\varphi(x,t)}$ is zero.
As proved in \cite[Proposition 3.2]{Zanella}, we have 
\[ M_{n-1}= (-1)^{n+1} \sum_{i=0}^{n-1} z^{\frac{q^{si}-1}{q^s-1}}, \]
where $z=\delta x^{q^{s(n-1)}-q^s}$.
Clearly, $M_{n-1}=0$ if and only if 
\begin{equation}\label{eq:z}
\sum_{i=0}^{n-1} z^{\frac{q^{si}-1}{q^s-1}}=0.
\end{equation}
For any $z\in\fqn$ we have
\[
z\left(\sum_{i=0}^{n-1} z^{\frac{q^{si}-1}{q^s-1}}\right)^{q^s} - \sum_{i=0}^{n-1} z^{\frac{q^{si}-1}{q^s-1}}=z^{\frac{q^{s(n-1)}-1}{q^s-1}}-1=z^{\frac{q^{n-1}-1}{q-1}}-1=\N_{q^n/q}(z)-1,
\]
and hence any solution $\overline{z} \in \fqn$ of \eqref{eq:z} satisfies $\N_{q^n/q}(\overline{z})=1$.
Therefore $z \in \fqn$ is a solution of \eqref{eq:z} if and only if there exists $y\in \fqn$ such that $z=y^{q^s-1}$ and $\mathrm{Tr}_{q^n/q}(y)=0$.
Since $n$ is even, $\gcd(q^{s(n-2)}-1,q^{sn}-1)=q^{2s}-1$, and hence for any $x \in \fqn$ there exists $u \in \fqn$ such that $x^{q^{s(n-1)}-q^s}=u^{q^{2s}-1}$, and conversely.
So, after writing $z=\delta u^{q^{2s}-1}$, Equation \eqref{eq:z} reads
\[ \sum_{i=0}^{n-1}(\delta u^{q^{2s}-1})^{\frac{q^{si}-1}{q^s-1}}=0,\]
i.e.
\[\mathrm{Tr}_{q^n/q}(d u^{q^s+1})=0. \]
The claim follows.
\end{proof}

Clearly, $Q\colon u\in\fqn \mapsto \mathrm{Tr}_{q^n/q}(du^{q^s+1})\in \mathbb{F}_q$ is a quadratic form of the $\fq$-vector space $\mathbb{F}_{q^n}$.
The associated bilinear form is
\[ \sigma \colon (u,v) \in \fqn\times\fqn\mapsto  \mathrm{Tr}_{q^n/q}(d(u^{q^s}v+uv^{q^s}))\in \mathbb{F}_q. \]
Recalling that $\delta=d^{q^s-1}$, Proposition \ref{prop:radical} follows.

\begin{proposition}\label{prop:radical}
The radical ${\rm Rad}(\sigma)$ of the bilinear form $\sigma$ is only the zero vector if $\mathrm{N}_{q^n/q}(\delta)\ne (-1)^{n/2}$ and $\{\delta u^{q^{2s}}+u\colon u \in \fqn\}$ otherwise.
In particular, the rank of $Q$ is $n$ if  $\mathrm{N}_{q^n/q}(\delta)\ne (-1)^{n/2}$ and $n-2$ otherwise. 
\end{proposition}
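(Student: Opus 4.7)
The plan is to compute $\mathrm{Rad}(\sigma)$ directly by unfolding the condition $\sigma(u,\cdot)=0$, reducing it to a single $\fq$-linear equation in $u$, and then analysing its solutions via a norm criterion coming from Hilbert 90 for the extension $\fqn/\mathbb{F}_{q^2}$.

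First, I would observe that $u\in \mathrm{Rad}(\sigma)$ if and only if $\mathrm{Tr}_{q^n/q}(d(u^{q^s}v + uv^{q^s}))=0$ for every $v\in\fqn$. Applying the Frobenius-invariance of the trace to move the $q^s$-power from $v$ onto the coefficient, namely $\mathrm{Tr}_{q^n/q}(duv^{q^s})=\mathrm{Tr}_{q^n/q}(d^{q^{n-s}}u^{q^{n-s}}v)$, this rewrites as
\[
\mathrm{Tr}_{q^n/q}\!\bigl((du^{q^s}+d^{q^{n-s}}u^{q^{n-s}})v\bigr)=0 \quad \text{for every } v\in\fqn.
\]
The non-degeneracy of the trace bilinear form in \eqref{eq:bilform} then yields the equivalent $\fq$-linear equation $du^{q^s}+d^{q^{n-s}}u^{q^{n-s}}=0$. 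Raising to the $q^s$-th power, using $u^{q^n}=u$, dividing by the nonzero element $d$, and invoking $\delta=d^{q^s-1}$, this simplifies to
\[
\delta u^{q^{2s}}+u=0.
\]
Hence $\mathrm{Rad}(\sigma)$ is the $\fq$-subspace of $\fqn$ cut out by this last equation, which one may identify with the set appearing in the statement.

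Second, I would determine its $\fq$-dimension. For a nonzero $u$, the equation reads $u^{q^{2s}-1}=-\delta^{-1}$. Since $n$ is even and $\gcd(s,n)=1$, we have $\gcd(2s,n)=2$, hence the kernel of the endomorphism $u\mapsto u^{q^{2s}-1}$ of $\fqn^{*}$ equals $\mathbb{F}_{q^2}^{*}$, and its image is the unique subgroup of $\fqn^{*}$ of order $(q^n-1)/(q^2-1)$, which coincides with $\ker\mathrm{N}_{q^n/\mathbb{F}_{q^2}}$. Therefore $\mathrm{Rad}(\sigma)$ has $\fq$-dimension either $0$ or $2$, according to whether $-\delta^{-1}$ lies outside or inside this subgroup; this dichotomy translates, via a direct norm computation using $\mathrm{N}(-1)=(-1)^{n/2}$, into the norm criterion written in the statement.

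Finally, the rank assertion is immediate from the standard relation $\mathrm{rank}(Q)=n-\dim_{\fq}\mathrm{Rad}(\sigma)$. The main technical point I anticipate is the careful bookkeeping to reconcile the norm condition arising naturally over $\mathbb{F}_{q^2}$ with the $\fq$-norm phrasing in the proposition, together with the explicit identification of the $2$-dimensional radical with the subset $\{\delta u^{q^{2s}}+u\colon u\in\fqn\}$ described in the statement; this uses that $\delta=d^{q^s-1}$ forces $\mathrm{N}_{q^n/q}(\delta)=1$ and produces a coset structure for the solutions of $\delta u^{q^{2s}}+u=0$ over $\mathbb{F}_{q^2}$.
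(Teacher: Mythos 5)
Your strategy coincides with the paper's: unfold $\sigma(u,\cdot)\equiv 0$ via $\mathrm{Tr}_{q^n/q}(duv^{q^s})=\mathrm{Tr}_{q^n/q}(d^{q^{n-s}}u^{q^{n-s}}v)$ and the non-degeneracy of the trace form to get $du^{q^s}+(du)^{q^{n-s}}=0$, i.e. $\delta u^{q^{2s}}+u=0$, and then count the roots of this linearized polynomial. That first step is correct and is exactly what the paper does. One preliminary correction: the radical is the \emph{kernel} $\{u\in\fqn\colon \delta u^{q^{2s}}+u=0\}$ of the map $u\mapsto\delta u^{q^{2s}}+u$, not its image $\{\delta u^{q^{2s}}+u\colon u\in\fqn\}$ as printed in the statement; when the kernel has $\fq$-dimension $2$ the image has dimension $n-2$, and only the kernel makes the rank of $Q$ equal to $n-2$. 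The paper's own proof speaks of ``the roots of $\delta x^{q^{2s}}+x$'', so this is a slip in the statement; your closing plan to ``identify'' the $2$-dimensional radical with that displayed subset cannot be carried out for $n>4$ and should be dropped rather than reconciled.

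The genuine gap is the ``translation'' you defer to the end: it is not bookkeeping, and it is the one step that does not go through. Your count is correct up to the conclusion that nonzero radical vectors exist if and only if $-\delta^{-1}$ lies in the image of $u\mapsto u^{q^{2s}-1}$, i.e. $\N_{q^n/q^2}(-\delta^{-1})=1$, i.e. $\N_{q^n/q^2}(\delta)=(-1)^{n/2}$. This condition on the norm over $\mathbb{F}_{q^2}$ is \emph{not} equivalent to the statement's condition $\N_{q^n/q}(\delta)=(-1)^{n/2}$, even under the standing hypothesis $\N_{q^n/q}(\delta)=1$ forced by $\delta=d^{q^s-1}$. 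For example, take $q=2$, $n=4$, $s=1$ and $\delta\in\mathbb{F}_{16}^*$ of multiplicative order $3$: then $\N_{q^4/q}(\delta)=\delta^{15}=1=(-1)^{n/2}$, so the stated criterion predicts a nontrivial radical, yet $u^{q^{2s}-1}=u^{3}=-\delta^{-1}=\delta^{-1}$ has no solution because $\delta^{-1}$ is not a cube in $\mathbb{F}_{16}^*$ (equivalently $\N_{q^4/q^2}(\delta)=\delta^{5}\ne 1$), so the radical is trivial. Pushed to its honest end, your argument proves the proposition with $\N_{q^n/q^2}(\delta)$ in place of $\N_{q^n/q}(\delta)$ --- which is also the version consistent with the $n=4$ dichotomy $\N_{q^4/q^2}(\delta)=1$ versus $\N_{q^4/q^2}(\delta)\ne1$ quoted from \cite{CsZ2018} earlier in the section --- and the ``direct norm computation'' you invoke to land on the printed criterion cannot exist. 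The paper's proof simply asserts the root count with the $\N_{q^n/q}$ condition and no justification, so your more explicit computation actually exposes a defect of the source; but as a proof of the statement as written, your proposal fails at precisely this point.
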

\begin{proof}
Recall that 
\[ \mathrm{Rad}(\sigma)=\{u \in \fqn \colon \sigma(u,v)=0, \,\, \forall v \in \fqn\}. \]
By direct checking,
\[ \mathrm{Tr}_{q^n/q}(d(u^{q^s}v+uv^{q^s}))=0 \]
if and only if
\[ \mathrm{Tr}_{q^n/q}((du^{q^s}+ (du)^{q^{s(n-1)}})v)=0. \]
Therefore, the elements of $ \mathrm{Rad}(\sigma)$ are the roots in $\fqn$ of $\delta x^{q^{2s}}+x$, which has in $\fqn$ either $1$ root if $\N_{q^n/q}(\delta)\ne(-1)^{n/2}$, or $q^2$ roots if $\N_{q^n/q}(\delta)=(-1)^{n/2}$.
\end{proof}

\begin{proposition}\label{prop:2weightneven}
Let $n$ be even and $\delta \in \fqn$ be such that $\N_{q^n/q}(\delta)=(-1)^{n/2}$.
The number of nonzero isotropic vectors of $Q$ is either 
\[q^2(q^{(n-2)/2-1}+1)(q^{(n-2)/2}-1)+q^2-1\]
or 
\[q^2(q^{(n-2)/2}+1)(q^{(n-2)/2-1}-1)+q^2-1.\]
\end{proposition}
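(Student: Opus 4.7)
The plan is to reduce the counting problem to the classification of non-degenerate quadratic forms over $\mathbb{F}_q$ in even dimension. First I would invoke Proposition~\ref{prop:radical}, which asserts that $\mathrm{Rad}(\sigma)$ has dimension $2$ over $\mathbb{F}_q$. The crucial step is then to prove that $Q$ vanishes identically on $\mathrm{Rad}(\sigma)$, so that $\mathrm{Rad}(Q)=\mathrm{Rad}(\sigma)$ and $Q$ descends to a non-degenerate quadratic form $\bar Q$ on the quotient $\fqn/\mathrm{Rad}(\sigma)$, a space of even dimension $n-2$.

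To carry this out, for $u\in\mathrm{Rad}(\sigma)$ I would substitute $u^{q^{2s}}=-u/\delta$ into $w:=d u^{q^s+1}$ and, using $\delta=d^{q^s-1}$, deduce that $w^{q^s}=-w$. The identity $\gcd(2s,n)=2$ (which follows from $n$ being even and $\gcd(s,n)=1$, forcing $s$ odd) yields $w\in\mathbb{F}_{q^2}$, and then $w^{q^s}=w^q$ forces $w+w^q=0$. Computing $Q(u)=\mathrm{Tr}_{q^n/q}(w)=(n/2)(w+w^q)=0$ finishes the verification. I expect this to be the main obstacle: the more natural derivation $Q(u)=-Q(u)\Rightarrow Q(u)=0$ works only in odd characteristic, so the argument must be designed to cover characteristic $2$ as well, where the trick is that the constraint $w\in\mathbb{F}_{q^2}$ together with $n$ even automatically makes the trace vanish.

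Once $\bar Q$ is known to be non-degenerate of even dimension $2m=n-2$, the classification theorem for quadratic forms over $\mathbb{F}_q$ implies that $\bar Q$ is either of hyperbolic (plus) type or of elliptic (minus) type. The corresponding counts of isotropic vectors including zero are the standard values $(q^m-1)(q^{m-1}+1)+1$ and $(q^m+1)(q^{m-1}-1)+1$ respectively. Since $Q$ vanishes on $\mathrm{Rad}(\sigma)$, every isotropic coset in the quotient lifts to $q^2$ isotropic vectors of $Q$ in $\fqn$. Multiplying the two counts by $q^2$ and subtracting $1$ for the zero vector produces exactly the two values in the statement.
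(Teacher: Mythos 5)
Your argument is correct and follows essentially the same route as the paper: identify the two-dimensional radical of $\sigma$, observe that $Q$ induces a non-degenerate quadratic form on an $(n-2)$-dimensional complement (the paper) or quotient (you), and read off the two possible counts from the classification of non-singular quadrics in even dimension, each isotropic class contributing $q^2$ vectors of $\mathbb{F}_{q^n}$. The one place you go beyond the paper is the explicit verification that $Q$ vanishes on $\mathrm{Rad}(\sigma)$ --- the paper folds this into the rank-$(n-2)$ assertion of Proposition \ref{prop:radical} --- and your computation ($w^{q^s}=-w$, hence $w\in\mathbb{F}_{q^2}$ with $w^q=-w$ and $\mathrm{Tr}_{q^n/q}(w)=\tfrac{n}{2}(w+w^q)=0$) correctly handles both odd and even characteristic.
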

\begin{proof}
By Proposition \ref{prop:radical}, the radical of $\sigma$ has $q^2$ vectors and hence the set of isotropic vectors of $\sigma$ is the direct sum of $\mathrm{Rad}(\sigma)$ and the set of isotropic vectors of the restriction $Q\mid_{V^\prime}$ of $Q$ to an $\fq$-subspace $V'=V(n-2,q)$ of $\fqn$ such that $V'\cap \mathrm{Rad}(\sigma)=\{0\}$.
Note that $Q\mid_{V^\prime}$ has rank $n-2$.
Therefore, the number $N$ of nonzero isotropic vectors of $Q$ is $q-1$ times the size of the cone of $\mathrm{PG}(n-1,q)$ whose vertex is the line ${\rm PG}(\mathrm{Rad}(\sigma),\fq)$ and whose basis is the non-singular quadric $X$ of $\mathrm{PG}(V',\mathbb{F}_q)=\mathrm{PG}(n-3,q)$ associated to $Q\mid_{V^\prime}$.
Thus
\[ N=(q^2|X|+q+1)(q-1), \]
and the claim follows (see \cite[Theorem 5.21]{Hirschfeld}).
\end{proof}

As a consequence, recalling that  nonzero isotropic vectors define points of weight two by Proposition \ref{th:correspondence}, we have the following results.

\begin{corollary}\label{cor:1}
Let $\N_{q^n/q}(\delta)=1=(-1)^{n/2}$, so that $q$ is even or $4\mid n$.
Then $f(x)$ is an $r$-fat polynomial of index $s$ over $\fqn$ with 
\[ r=\frac{q^2(q^{\frac{n-2}2}+1)(q^{\frac{n-2}2-1}-1)}{q^2-1}+1. \]
\end{corollary}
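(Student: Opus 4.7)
My plan is to chain the three preceding results. By Proposition \ref{th:correspondence}, the number $r$ of weight-two points in $L_{f,s}$ equals $N/(q^2-1)$, where $N$ is the count of nonzero isotropic vectors of the quadratic form $Q(u)=\mathrm{Tr}_{q^n/q}(du^{q^s+1})$ on $\fqn$, with $d$ satisfying $\delta=d^{q^s-1}$. The hypothesis $\N_{q^n/q}(\delta)=(-1)^{n/2}$ activates Proposition \ref{prop:radical}, so that $\mathrm{Rad}(\sigma)$ has $\fq$-dimension $2$ and $Q$ has rank $n-2$. Proposition \ref{prop:2weightneven} then pins $N$ down to one of two explicit values, corresponding respectively to the non-degenerate quadric induced by $Q|_{V'}$ on a complement $V'$ of $\mathrm{Rad}(\sigma)$ being of hyperbolic type $Q^{+}(n-3,q)$ or of elliptic type $Q^{-}(n-3,q)$.

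Comparing the two candidates with the formula in the statement, the stated value of $r$ matches exactly the elliptic case
\[
N \;=\; q^2(q^{(n-2)/2}+1)(q^{(n-2)/2-1}-1) + (q^2-1).
\]
Hence the sole substantive step is to confirm that, under $\N_{q^n/q}(\delta)=1$, the induced quadric is elliptic. I would exploit the substitution $u\mapsto au$ for $a\in\fqn^{*}$, which sends $Q$ to the equivalent form $u\mapsto \mathrm{Tr}_{q^n/q}(d\,a^{q^s+1}u^{q^s+1})$, i.e.\ to the form associated with the new parameter $\delta\cdot a^{q^{2s}-1}$. Since $\N_{q^n/q}(a^{q^{2s}-1})=1$, this action preserves the norm-one locus and shows that the quadric type is constant on each coset of the subgroup $\{a^{q^{2s}-1}:a\in\fqn^{*}\}$ inside that locus. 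It therefore suffices to determine the type on a finite set of coset representatives.

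On each representative $\delta$ I would identify the type by computing a discrete invariant: the Arf invariant when $q$ is even, or the discriminant modulo squares when $q$ is odd with $4\mid n$. Both can be extracted from the trace-form structure using standard identities for $\mathrm{Tr}_{q^n/q}$, Gauss sums, or by exhibiting a totally singular $\fq$-subspace of dimension $(n-2)/2-1$ (which, if found, already forces elliptic type). The hypothesis $(-1)^{n/2}=1$ is precisely what makes this invariant land in the non-square class and hence in the elliptic type. I expect the main obstacle to be exactly this type identification: there is no reason \emph{a priori} why distinct substitution-orbits inside the norm-one locus should give the same quadric type, so one either needs to perform the invariant calculation uniformly for all representatives, or find a cleaner argument that bypasses a case split.

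Once the elliptic type is settled, the arithmetic
\[
r \;=\; \frac{q^2(q^{(n-2)/2}+1)(q^{(n-2)/2-1}-1) + (q^2-1)}{q^2-1} \;=\; \frac{q^2(q^{(n-2)/2}+1)(q^{(n-2)/2-1}-1)}{q^2-1} + 1
\]
coincides with the claimed formula, completing the proof.
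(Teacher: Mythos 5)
Your reduction coincides with the one the paper intends: Proposition \ref{th:correspondence} converts the count of weight-two points of $L_{f,s}$ into the count of nonzero isotropic vectors of $Q$ divided by $q^2-1$, Proposition \ref{prop:radical} gives the two-dimensional radical, and Proposition \ref{prop:2weightneven} leaves exactly two candidate values, of which the stated $r$ is the ``elliptic'' one. The genuine gap is that you never establish that the elliptic case is the one that occurs. You outline a plan (transport of $Q$ under $u\mapsto au$, constancy of the quadric type on orbits of $\{a^{q^{2s}-1}\colon a\in\fqn^*\}$ inside the norm-one locus, then an Arf-invariant or discriminant computation on coset representatives), but you explicitly concede that this type identification is ``the main obstacle'' and you do not carry it out. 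As written, your argument only shows that $r$ lies in a two-element set; selecting the correct element is the entire content of Corollary \ref{cor:1} beyond the three propositions you quote, so the proof is incomplete exactly where the work is.

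Your own caveat --- that there is no a priori reason for all orbits inside the norm-one locus to yield the same quadric type --- is moreover not a formality, and the plan cannot be pushed through uniformly as stated. Compare with the $n=4$ case quoted earlier in the paper from \cite{CsZ2018}: when $\N_{q^4/q}(\delta)=1$ one has $r=1$ if $\N_{q^4/q^2}(\delta)=1$ but $r=q+1$ if $\N_{q^4/q^2}(\delta)\ne 1$, and $q+1$ equals neither of the two candidates your dichotomy permits for $n=4$ (the ``hyperbolic'' value $\frac{2q^2}{q+1}+1$ is not even an integer). The finer invariant $\N_{q^n/q^2}(\delta)$ genuinely matters: the nonzero roots of $\delta x^{q^{2s}}+x$ satisfy $x^{q^{2s}-1}=-\delta^{-1}$, which is solvable precisely when $\N_{q^n/q^2}(\delta)=(-1)^{n/2}$, so for some $\delta$ with $\N_{q^n/q}(\delta)=1$ the radical you are taking for granted is trivial and the hyperbolic/elliptic alternative of Proposition \ref{prop:2weightneven} does not even apply. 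Any completion of your argument must first sort the norm-one locus according to this condition and only then identify the type of the residual quadric; neither step is present in your proposal.
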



\begin{corollary}\label{cor:2}
Let $\N_{q^n/q}(\delta)=1\ne(-1)^{n/2}$, so that $q$ is odd, $2\mid n$ and $4\nmid n$.
Then $f(x)$ is an $r$-fat polynomial of index $s$ over $\fqn$ with
\[
r=\frac{(q^{n/2}+1)(q^{n/2-1}-1)}{q^2-1}.
\]
\end{corollary}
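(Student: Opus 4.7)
The plan is to mirror the proof of Corollary \ref{cor:1}, replacing the degenerate case with the non-degenerate one. By Proposition \ref{th:correspondence}, $r$ equals the number of nonzero $u\in\fqn^*$ satisfying
\[
Q(u):=\Tr_{q^n/q}(d u^{q^s+1})=0
\]
divided by $q^2-1$, where $d\in\fqn^*$ is any element with $\delta=d^{q^s-1}$ (such $d$ exists because $\N_{q^n/q}(\delta)=1$ and the map $x\mapsto x^{q^s-1}$ surjects onto the norm-one subgroup of $\fqn^*$). Under the present hypothesis $\N_{q^n/q}(\delta)=1\ne(-1)^{n/2}$, Proposition \ref{prop:radical} gives that the radical of the associated bilinear form $\sigma$ is trivial, so $Q$ is non-degenerate of rank $n$ and its projective zero set is a non-singular quadric $X\subset\PG(n-1,q)$.

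Thus $X$ is of hyperbolic or elliptic type, with
\[
|X|=\frac{(q^{n/2-1}+\epsilon)(q^{n/2}-\epsilon)}{q-1},\qquad \epsilon\in\{+1,-1\},
\]
so the number of nonzero isotropic vectors of $Q$ is $(q^{n/2-1}+\epsilon)(q^{n/2}-\epsilon)$. Since the target formula matches $\epsilon=-1$ (elliptic case), it suffices to show that $X$ is elliptic, after which the division by $q^2-1$ completes the proof.

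The main obstacle is therefore the determination of the type of $X$. The plan here is a discriminant calculation: for $q$ odd and $n$ even, the non-degenerate form $Q$ is elliptic if and only if $(-1)^{n/2}\det(Q)$ is a non-square in $\fq^*$. One would compute $\det(Q)$ modulo squares in a suitable $\fq$-basis of $\fqn$ (for instance, a self-dual basis with respect to the trace form), expressing the resulting Gram determinant in terms of $\N_{q^n/q}(d)$; a direct manipulation of the Dickson-matrix determinant, using the relation $\delta=d^{q^s-1}$ together with $\N_{q^n/q}(\delta)=1$, should show $\det(Q)$ is a square. Since $(-1)^{n/2}=-1$ under our hypothesis (i.e.\ $n/2$ odd), one then needs the standard observation that $-1$ times a square is a square if and only if $-1$ is a square in $\fq^*$; the argument is arranged so that the only possibility consistent with the rank-$n$ situation $\N_{q^n/q}(\delta)\ne(-1)^{n/2}$ is precisely the elliptic one.

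Once ellipticity is established, the count $(q^{n/2-1}-1)(q^{n/2}+1)$ of nonzero isotropic vectors of $Q$, divided by $q^2-1$, yields
\[
r=\frac{(q^{n/2}+1)(q^{n/2-1}-1)}{q^2-1},
\]
as claimed.
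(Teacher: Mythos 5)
Your reduction is correct and follows exactly the paper's route: by Proposition \ref{th:correspondence} the number of weight-two points of $L_{f,s}$ is the number of nonzero isotropic vectors of $Q(u)=\Tr_{q^n/q}(du^{q^s+1})$ divided by $q^2-1$, and by Proposition \ref{prop:radical} the hypothesis $\N_{q^n/q}(\delta)\ne(-1)^{n/2}$ makes $Q$ non-degenerate of rank $n$, so that count is $(q^{n/2-1}+\epsilon)(q^{n/2}-\epsilon)$ with $\epsilon=\pm1$ according to the type of the quadric. The genuine gap is the step you yourself single out as ``the main obstacle'': you never determine $\epsilon$. Worse, the discriminant sketch cannot work as described. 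If, as you predict, $\det(Q)$ were a square in $\fq^*$ independently of $q$, then $(-1)^{n/2}\det(Q)=-\det(Q)$ would be a non-square exactly when $q\equiv 3\pmod 4$, so the quadric would be elliptic for $q\equiv3\pmod4$ and hyperbolic for $q\equiv1\pmod4$, contradicting the uniform formula you are trying to prove; since the answer is in fact always elliptic, $\det(Q)$ modulo squares must itself vary with $q\bmod 4$, so the computation you postpone would have to come out differently from what you assert. The closing claim that ``the only possibility consistent with the rank-$n$ situation is precisely the elliptic one'' is not an argument: the rank alone is compatible with both types.

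The gap can be filled without any Gram-determinant computation. By Proposition \ref{th:correspondence} the number of nonzero isotropic vectors of $Q$ must be divisible by $q^2-1$. Since $n/2$ is odd, $n/2-1$ is even, so $q^2-1$ divides $q^{n/2-1}-1$ and hence divides the elliptic count $(q^{n/2}+1)(q^{n/2-1}-1)$; on the other hand the hyperbolic count satisfies $(q^{n/2}-1)(q^{n/2-1}+1)\equiv(-2)(2)=-4\pmod{q+1}$, which rules it out for every odd $q>3$, and for $q=3$ a direct check modulo $q^2-1=8$ gives residue $4$ and rules it out as well. Hence $\epsilon=-1$ and the stated value of $r$ follows. (Alternatively one can invoke the known evaluation of the Weil sums $\sum_{x\in\fqn}\chi(dx^{q^s+1})$.) For what it is worth, the paper's own proof is equally laconic at this point, asserting the elliptic count immediately after noting that the rank is $n$; your instinct that the type of the quadric is the crux is right, but the justification is missing and the route you sketch would not supply it.
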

\begin{proof}
The proof follows arguing as in the proof of Proposition \ref{prop:2weightneven}, using that $Q$ has rank $n$ and hence $Q$ has $(q^{n/2}+1)(q^{n/2-1}-1)$ nonzero isotropic vectors.
\end{proof}

As a consequence of Theorem \ref{th:Zanella} and Corollaries \ref{cor:1} and \ref{cor:2} we completely determine $r$ for $r$-fat LP-polynomials.

\begin{theorem}\label{th:finalLP}
Let $\delta\in\mathbb{F}_{q^n}$, $\gcd(s,n)=1$, $f(x)=x+\delta x^{q^{2s}}\in\mathbb{F}_{q^n}[x]$ and $g(x)=x^{q^{s(n-1)}}+\delta x^{q^s}\in\mathbb{F}_{q^n}[x]$.
The polynomials $f(x)$ and $g(x)$ are $r$-fat polynomials over $\fqn$ of index $s$ and $0$ respectively, where 
\begin{itemize}
    \item $r=0$, if $\N_{q^n/q}(\delta)\ne 1$;
    \item $r=\frac{q^{n-1}-1}{q^2-1}$, if $\N_{q^n/q}(\delta)= 1$ and $n$ is odd;
    \item $r=\frac{q^2(q^{\frac{n-2}2}+1)(q^{\frac{n-2}2-1}-1)}{q^2-1}+1$, if $\N_{q^n/q}(\delta)= 1$, $n$ is even, and either $q$ is even or $4\mid n$;
    \item $r=\frac{(q^n/2+1)(q^{n/2-1}-1)}{q^2-1}$, if $\N_{q^n/q}(\delta)= 1$, $n$ is even, $q$ is odd, $2\mid n$ and $4\nmid n$.
\end{itemize}
\end{theorem}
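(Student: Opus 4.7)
The plan is to recognize that this theorem is essentially an assembly of the intermediate results developed through the section, so the main work is choreography rather than new content. First I would eliminate the $g$-polynomial from the picture using equation~\eqref{eq:fg}: for every $m\in\fqn$ one has $\dim_{\fq}\ker(f(x)-mx^{q^s})=\dim_{\fq}\ker(g(x)-mx)$, so $f$ is $r$-fat of index $s$ if and only if $g$ is $r$-fat of index $0$ with the same $r$, and it suffices to verify the four claimed values for $f$.

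Next I would split according to $\N_{q^n/q}(\delta)$ and the parity of $n$. The case $\N_{q^n/q}(\delta)\ne1$ is immediate from Theorem~\ref{th:Zanella}, which asserts $f$ is $0$-fat precisely when $\N_{q^n/q}(\delta)\ne1$. Assuming $\N_{q^n/q}(\delta)=1$ from now on, if $n$ is odd then Proposition~\ref{prop:nodd} directly produces $r=(q^{n-1}-1)/(q^2-1)$.

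For $n$ even I would invoke the quadratic-form machinery built in the section. By Proposition~\ref{th:correspondence}, the number of points of weight two in $L_{f,s}$ equals, up to the factor $q^2-1$, the number of nonzero isotropic vectors of the quadratic form $Q(u)=\Tr_{q^n/q}(du^{q^s+1})$ on $\fqn$, where $d\in\fqn^*$ is chosen so that $d^{q^s-1}=\delta$. By Proposition~\ref{prop:radical}, $Q$ has rank $n$ or $n-2$ according to whether $\N_{q^n/q}(\delta)\ne(-1)^{n/2}$ or $\N_{q^n/q}(\delta)=(-1)^{n/2}$, respectively. Under our normalization $\N_{q^n/q}(\delta)=1$, this dichotomy becomes: $1=(-1)^{n/2}$ (which is exactly the case $4\mid n$, or $q$ even so that $1=-1$ in $\fq$) versus $1\ne(-1)^{n/2}$ (which forces $q$ odd and $n\equiv2\pmod4$). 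Corollary~\ref{cor:1} supplies the value of $r$ in the first subcase and Corollary~\ref{cor:2} in the second, completing the case analysis.

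The main obstacle I anticipate sits inside the even-$n$ argument: Proposition~\ref{prop:2weightneven} produces \emph{two} possible counts of nonzero isotropic vectors of $Q$ according to whether the quadric induced on a complement of $\mathrm{Rad}(\sigma)$ in $\PG(n-3,q)$ is elliptic or hyperbolic. Pinning down the correct type under the constraint $\N_{q^n/q}(\delta)=1=(-1)^{n/2}$ is the only nontrivial bookkeeping, and amounts to tracing the discriminant of $Q$ through the relation $d^{q^s-1}=\delta$; this selection is already resolved in Corollary~\ref{cor:1}, so at the level of Theorem~\ref{th:finalLP} the four cases assemble without further computation.
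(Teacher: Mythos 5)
Your proposal is correct and follows essentially the same route as the paper, which presents Theorem \ref{th:finalLP} precisely as the assembly of Theorem \ref{th:Zanella}, Proposition \ref{prop:nodd}, and Corollaries \ref{cor:1} and \ref{cor:2}, with \eqref{eq:fg} handling the passage between $f$ and $g$. Your observation that the only delicate point is selecting the correct quadric type among the two options of Proposition \ref{prop:2weightneven} is accurate, and the paper likewise delegates that selection to Corollary \ref{cor:1}.
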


\section{Open problems}\label{sec:open}

We conclude the paper with some open problems.

Up to now, there are only few examples of $r$-fat polynomials with $r>0$ and $r$ small with respect to $q$. Even for the case $r=1$ we do not know many examples. 

\begin{open}
    Find new examples of $1$-fat polynomials over $\fqn$.
\end{open}


In this paper we completely determine the integer $r$ for which LP-polynomials are $r$-fat. It could be interesting to investigate other families of binomials which contain scattered polynomials.
\begin{open}
    Consider the polynomial $f(x)=x^{q^s}+\delta x^{q^{2s}}\in \fqn[x]$ with $\delta \ne 0$. Which is the integer $r$ such that $f(x)$ is an $r$-fat polynomial of index $0$?
    Zanella in \cite[Proposition 3.8]{Zanella} proved that $r>0$ when $n=5$, and Montanucci (cf. \cite[Remark 3.5]{Zanella}) proved that $r>0$ for any $n>5$.
\end{open}

\begin{open}
    Consider the polynomial $f(x)=x^{q^s}+\delta x^{q^{s+n}}\in \mathbb{F}_{q^{2n}}[x]$ with $\delta \ne 0$.
    Which is the integer $r$ such that $f(x)$ is an $r$-fat polynomial of index $0$?
    In \cite{CMPZ}, the authors proved that the maximum weight of $f(x)$ as a polynomial of index $0$ is at most two (cf. \cite[Proposition 4.1]{CMPZ}) and that $r$ is a multiple of $\frac{q^n-1}{q-1}$ (cf. \cite[Corollary 5.4]{CMPZ}). For $n=6$ the papers \cite{BCsM} and \cite{PZ2019} characterize those $\delta \in \F_{q^6}$ such that $f(x)$ is scattered of index $0$, whereas in \cite[Theorem 1.1]{PZZ} it has been proved that $r\geq \frac{q^n-1}{q-1}$ for every $\delta \in \F_{q^{2n}}^*$, whenever $n\geq 4s+2$ if either $q=3$ and $s>1$, or $q=2$ and $s>2$; or $n\geq 4s+1$ otherwise.
\end{open}

The next open problem concerns the classification of $1$-fat polynomials over $\mathbb{F}_{q^4}$.
By Corollary \ref{cor:PG1q4clubs}, this classification is obtained once a description of shape $L_f$ is found for the linear set described in \eqref{eq:iiB}.

\begin{open}
Find a polynomial $f(x)\in\mathbb{F}_{q^4}$ such that $L_f$ is ${\rm P\Gamma L}$-equivalent to the $\fq$-linear set of $\PG(1,q^4)$ described in \eqref{eq:iiB}.
\end{open}

Our last open problem concerns the new example of $1$-fat polynomial $f_w(x)$ with maximum weight $3$ determined in Theorem \ref{th:3club}. For $q=4$, we found that $L_{f_w}$ is $\mathrm{P\Gamma L}$-equivalent to $L_\lambda$  as defined in \eqref{eq:exKMnopol}, for some $\lambda \in \mathbb{F}_{4^5}$. It would be interesting to investigate the equivalence issue for any power $q$ of $4$. 

\begin{open}
Prove or disprove that the linear set $L_{f_w}$ in Theorem \ref{th:3club} is $\mathrm{P\Gamma L}$-equivalent to $L_\lambda$ as defined in \eqref{eq:exKMnopol} for some $\lambda \in \mathbb{F}_{q^5}$.
\end{open}

\section{Acknowledgements} 
The research of the last three authors was supported by the Italian National Group for Algebraic and Geometric Structures and their Applications (GNSAGA - INdAM).
The third author is funded by the project ``Attrazione e Mobilità dei
Ricercatori'' Italian PON Programme (PON-AIM 2018 num. AIM1878214-2).
The third and the fourth authors are supported by the project ``VALERE: VAnviteLli pEr la RicErca" of the University of Campania ``Luigi Vanvitelli''.


\begin{thebibliography}{100}

\bibitem{BCsM}
{\sc D. Bartoli, B. Csajb\'ok and M. Montanucci:}
On a conjecture about maximum scattered subspaces of $\mathbb{F}_{q^6} \times \mathbb{F}_{q^6}$, \href{https://arxiv.org/abs/2004.13101}{arXiv:2004.13101}.

\bibitem{BGMP}
{\sc D. Bartoli, M. Giulietti, G. Marino and O. Polverino:}
Maximum scattered linear sets and complete caps in Galois spaces,
\emph{Combinatorica} {\bf 38}(2) (2018), 255--278.


\bibitem{BM}
{\sc D. Bartoli and M. Montanucci:}
Towards the full classification of exceptional scattered polynomials, \emph{J. Combin. Theory Ser. A}, to appear.


\bibitem{BZ}
{\sc D. Bartoli and Y. Zhou:}
Exceptional scattered polynomials,
\emph{J. Algebra} {\bf 509} (2018), 507--534.

\bibitem{BoPol}
{\sc G. Bonoli and O. Polverino:}
$\mathbb{F}_q$-linear blocking sets in $\mathrm{PG}(2,q^4)$,
{\it Innov. Incidence Geom.} {\bf 2} (2005), 35--56.

\bibitem{qres}
{\sc B. Csajb\'ok:} Scalar $q$-subresultants and Dickson matrices,
\emph{J. Algebra} {\bf 547} (2020), 116--128.

\bibitem{CsMP}
{\sc B. Csajb\'ok, G. Marino and O. Polverino:}
{Classes and equivalence of linear sets in $\mathrm{PG}(1,q^n)$},
\emph{J. Combin. Theory Ser. A }{\bf 157} (2018), 402--426.

\bibitem{CsMPq5}
{\sc B. Csajb\'ok, G. Marino and O. Polverino:}
A Carlitz type result for linearized polynomials,
\emph{Ars Math. Contemp.} {\bf 16(2)} (2019), 585--608.

\bibitem{CMPZ}
{\sc B. Csajb\'ok, G. Marino, O. Polverino and C. Zanella:}
A new family of MRD-codes,
\emph{Linear Algebra Appl.} {\bf 548} (2018), 203--220.

\bibitem{CW}
{\sc B. Csajb\'ok and Zs. Weiner:}
Generalizing Korchm\'aros-Mazzocca arcs,
\href{https://arxiv.org/pdf/2008.10347}{arXiv:2008.10347}.


\bibitem{CsZ2018}
{\sc B. Csajb\'ok and C. Zanella:}
Maximum scattered $\F_q$-linear sets of $\PG(1,q^4)$,
\emph{Discrete Math.} {\bf 341} (2018), 74–-80.

\bibitem{DeBeuleVdV}
{\sc J. De Beule and G. Van de Voorde:}
The minimum size of a linear set,
\emph{J. Combin. Theory Ser. A} {\bf 164} (2019), 109--124.

\bibitem{DeBoeckVdV2020}
{\sc M. De Boeck and G. Van de Voorde:} 
The weight distributions of linear sets in $\mathrm{PG}(1,q^5)$, 
\href{https://arxiv.org/abs/2006.04961}{arXiv:2006.04961}.


\bibitem{DeBoeckVdV}
{\sc M. De Boeck and G. Van de Voorde:} 
A linear set view on KM--arcs, 
\emph{J. Algebraic Combin.} {\bf 44} (2016), 131–164.

\bibitem{FM2020} 
{\sc A. Ferraguti and G. Micheli:}
Exceptional Scatteredness in prime degree,
\emph{J. Algebra}, {\bf 565} (2021), 691--701.

\bibitem{FM2020b}
{\sc A. Ferraguti and G. Micheli:} 
Full classification of permutation rational functions and complete rational functions of degree three over finite fields,
\emph{Des.
Codes Cryptogr.}, {\bf 88(5)} (2020), 867--886.

\bibitem{Gow}
{\sc R. Gow and R. Quinlan:} 
Galois extensions and subspaces of alterning bilinear forms with special rank properties, \emph{Linear Algebra Appl.} {\bf 430} (2009), 2212-–2224.

\bibitem{guralnick2007exceptional}
{\sc R.M. Guralnick, T.J. Tucker and M.E. Zieve:}
Exceptional covers and bijections on rational points.
\emph{Int. Math. Res. Not. IMRN},  (1) (2007), Art. ID rnm004, 19 pp.

\bibitem{Hirschfeld}
{\sc J.W.P. Hirschfeld:}
\emph{Projective Geometries Over Finite Fields}, Second Edition,
Oxford University Press, New York (1998).

\bibitem{HKT}
{\sc J.W.P. Hirschfeld, G. Korchm\'aros and F. Torres:}
\emph{Algebraic Curves over a Finite Field},
Princeton University Press (2008).

\bibitem{KM}
{\sc G. Korchm\'aros and F. Mazzocca:}
On $(q+t)$-arcs of type $(0,2,t)$ in a desarguesian plane of order $q$,
\emph{Math. Proc. Camb. Philos. Soc.} {\bf 108(3)} (1990), 445--459.

\bibitem{Koster}
{\sc M. Kosters:} 
A short proof of the Chebotarev density theorem for function fields,
\emph{Math. Commun.} {\bf 22(2)} (2017), 227--233.

\bibitem{LMPT2015}
{\sc M. Lavrauw, G. Marino, O. Polverino and R. Trombetti:}
Solution to an isotopism question concerning rank $2$ semifields,
\emph{J. Combin. Des.} {\bf 23(2)} (2015), 60--77.

\bibitem{LavVdV2010}
{\sc M. Lavrauw and G. Van de Voorde:} 
On linear sets on a projective line,
\emph{Des. Codes Cryptogr.} {\bf 56(2--3)} (2010), 89--104.

\bibitem{LavVdV}
{\sc M. Lavrauw and G. Van de Voorde:} 
Field reduction and linear sets in finite geometry, 
In: Topics in Finite Fields, \emph{AMS Contemporary Math}, vol. 623, pp. 271-293. American Mathematical Society, Providence (2015).

\bibitem{LMPT:14} 
{\sc G. Lunardon, G. Marino, O. Polverino and R. Trombetti:} 
{\rm Maximum scattered linear sets of pseudoregulus type and the Segre variety ${\mathcal S}_{n, n}$}, 
{\it  J. Algebr. Combin.}  \, {\bf 39} (2014),  807--831.

\bibitem{LunPol2000}
{\sc G. Lunardon and O. Polverino:} 
Blocking sets of size $q^t + q^{t-1} + 1$,
\emph{J. Combin. Theory Ser. A} {\bf 90} (2000), 148–-158.

\bibitem{McGuireSheekey}
{\sc G. McGuire and J. Sheekey:}
A Characterization of the Number of Roots of Linearized and Projective Polynomials in the Field of Coefficients,
\emph{Finite Fields Appl.} {\bf 57} (2019), 68--91.

\bibitem{Polverino}
{\sc O. Polverino:}
Linear sets in finite projective spaces,
\emph{Discrete Math.} {\bf 310(22)} (2010), 3096--3107.

\bibitem{PZZ}
{\sc O. Polverino, G. Zini and F. Zullo:}
On certain linearized polynomials with high degree and kernel of small dimension, 
\emph{J. Pure Appl. Algebra} \textbf{225(2)} (2021).

\bibitem{PZ2019}
{\sc O. Polverino and F. Zullo:}
On the number of roots of some linearized polynomials,
\emph{Linear Algebra Appl.} {\bf 601} (2020), 189--218.

\bibitem{PZScatt}
{\sc O. Polverino and F. Zullo:}
Connections between scattered linear sets and MRD-codes,
\emph{Bull. Inst. Combin. Appl.} {\bf 89} (2020), 46--74.

\bibitem{John}
{\sc J. Sheekey:}
A new family of linear maximum rank distance codes, \emph{Adv. Math. Commun.} {\bf 10} (3) (2016), 475--488.

\bibitem{Sheekeysurvey}
{\sc J. Sheekey:}
MRD codes: constructions and connections,
\emph{Combinatorics and finite fields: Difference sets, polynomials, pseudorandomness and applications}, Radon Series on Computational and
Applied Mathematics, K.-U. Schmidt and A. Winterhof (eds.).

\bibitem{Sti}
{\sc H. Stichtenoth:}
\emph{Algebraic Function Fields and Codes},
Volume 254 of Graduate Texts in Mathematics, 2nd edition, Springer, Berlin (2009).

\bibitem{VV}
{\sc F. R. Villegas, J. F. Voloch:}
On certain plane curves with many integral points,
\emph{Exp. Math.} {\bf 8} (1999), 57-–62.

\bibitem{Zanella}
{\sc C. Zanella:}
A condition for scattered linearized polynomials involving Dickson matrices,
\emph{J. Geom.} (2019) {\bf 110}:50. \href{https://doi.org/10.1007/s00022-019-0505-z}{https://doi.org/10.1007/s00022-019-0505-z}.

\bibitem{ZiniZulloInt}
{\sc G. Zini and F. Zullo:}
On the intersection problem for linear sets in the projective line,
\href{https://arxiv.org/abs/2004.09441}{arXiv:2004.09441}.

\bibitem{ZiniZulloScatt}
{\sc G. Zini and F. Zullo:}
Scattered subspaces and related codes,
\href{https://arxiv.org/abs/2007.04643}{arXiv:2007.04643}.

\end{thebibliography}
\end{document}